\newtheorem{theorem}{Theorem}[section]
\newtheorem{lemma}[theorem]{Lemma}
\newtheorem{proposition}[theorem]{Proposition}
\newtheorem{corollary}[theorem]{Corollary}
\newtheorem{exAux}[theorem]{Example}
\newtheorem{Def}[theorem]{Definition}
\newenvironment{definition}{\begin{Def} \rm}{\end{Def}}
\newtheorem{Note}[theorem]{Note}
\newtheorem{Problem}[theorem]{Problem}
\newtheorem{Rem}[theorem]{Remark}
\newtheorem{Not}[theorem]{Notation}
\newenvironment{notation}{\begin{Not} \rm}{\end{Not}}
\newtheorem{Conj}[theorem]{Conjecture}
\newtheorem{Ass}[theorem]{Assumption}
\newenvironment{proof}{\medskip\noindent{\bf Proof.\ }}{\qed\medskip}
\newenvironment{proofof}[1]{\medskip\noindent{\bf Proof  of {#1}.\ 
}}{\qed\medskip}
\newcommand{\qed}{\hfill\mbox{$\Box$\qquad\qquad}}
\newcommand{\F}{\mathbb{F}}
\newcommand{\Mat}{\text{\rm Mat}}
\newcommand{\vphi}{\varphi}
\newif\ifDRAFT
\begin{document}

\title{An LR pair that can be extended to an LR triple}

\author{Kazumasa Nomura}

\date{}

\maketitle

\bigskip

{
\small
\begin{quote}
\begin{center}
{\bf Abstract}
\end{center}
Fix an integer $d \geq 0$, a field $\F$, and a vector space $V$ over $\F$ with 
dimension $d+1$.
By a decomposition of $V$ we mean a sequence $\{V_i\}_{i=0}^d$
of $1$-dimensional $\F$-subspaces of $V$ such that $V = \sum_{i=0}^d V_i$
(direct sum).
Consider $\F$-linear transformations $A$, $B$ from $V$ to $V$.
Then $A,B$ is called an LR pair
whenever there exists a decomposition $\{V_i\}_{i=0}^d$ of $V$
such that $A V_i = V_{i-1}$ and $B V_i = V_{i+1}$ for $0 \leq i \leq d$,
where $V_{-1}=0$ and $V_{d+1}=0$.
By an LR triple we mean a $3$-tuple $A,B,C$ of $\F$-linear transformations
from $V$ to $V$ such that any two of them form an LR pair.
In the present paper, we consider how an LR pair $A,B$ can be
extended to an LR triple $A,B,C$.
\end{quote}
}

\section{Introduction}

Throughout the paper, fix an integer $d \geq 0$, a field $\F$,
and a vector space $V$ over $\F$ with dimension $d+1$.
Let $\text{End}(V)$ denote the $\F$-algebra consisting of the
$\F$-linear transformations from $V$ to $V$,
and let $\Mat_{d+1}(\F)$ denote the $\F$-algebra consisting of the
$(d+1) \times (d+1)$ matrices that have all entries in $\F$.
We index the rows and columns by $0,1,\ldots,d$.

The notion of an LR triple was introduced by Paul Terwilliger
in \cite{T:LRT}, motivated by the study about the equitable presentation of the Lie algebra $\mathfrak{sl}_2$
and the quantum algebra $U_q(\mathfrak{sl}_2)$ (see \cite{BenT,HarT,ITW,T:uqsl2,T:Billiard}).
The LR triples were classified in \cite{T:LRT}.
See also \cite{N:LRT} about LR triples.

We recall some basic facts concerning an LR triple.
By a {\em decomposition} of $V$ we mean a sequence $\{V_i\}_{i=0}^d$ of $1$-dimensional $\F$-subspaces
of $V$ such that $V = \sum_{i=0}^d V_i$ (direct sum).
Let $A$ be an element in $\text{End}(V)$ and $\{V_i\}_{i=0}^d$ be a decomposition of $V$.
For notational convenience set $V_{-1}=0$ and $V_{d+1}=0$.
We say $A$ {\em lowers} $\{V_i\}_{i=0}^d$ whenever $AV_i = V_{i-1}$ for $0 \leq i \leq d$.
We say $A$ {\em raises} $\{V_i\}_{i=0}^d$ whenever $A V_i = V_{i+1}$ for $0 \leq i \leq d$.
We say $A$ is {\em tridiagonal on $\{V_i\}_{i=0}^d$} whenever
\begin{align*}
  A V_i &\subseteq V_{i-1} + V_i + V_{i+1}   &&  (0 \leq i \leq d).
\end{align*}
We say $A$ is {\em irreducible tridiagonal on $\{V_i\}_{i=0}^d$} whenever 
$A$ is tridiagonal on $\{V_i\}_{i=0}^d$ and 
\begin{align*}
  A V_i &\not\subseteq V_{i-1} + V_i, &
  A V_i &\not\subseteq V_i + V_{i+1} && (0 \leq i \leq d).
\end{align*}

By a {\em lowering-raising pair} (or {\em LR pair}) on $V$ we mean an ordered pair
$A,B$ of elements in  $\text{End}(V)$ such that there exists a decomposition $\{V_i\}_{i=0}^d$ of $V$
that is lowered by $A$ and raised by $B$.
In this case, the decomposition $\{V_i\}_{i=0}^d$ is uniquely determined by $A,B$
(see Lemma \ref{lem:ABdecomp}).
We call $\{V_i\}_{i=0}^d$ the {\em $(A,B)$-decomposition}.
By an {\em LR triple} on $V$ we mean an ordered $3$-tuple $A,B,C$
of elements in $\text{End}(V)$ such that any two of them form an LR pair.

In the present paper we consider how an LR pair can be extended to an LR triple.
Our results are based on the following fact:

\begin{lemma}    {\rm (See \cite[Proposition 13.39]{T:LRT}.)}
\label{lem:based}    \samepage
\ifDRAFT {\rm lem:based}. \fi
Let $A,B,C$ be an LR triple on $V$. 
Then the following hold.
\begin{itemize}
\item[\rm (i)]
Let $\{V'_i\}_{i=0}^d$ be the $(A,C)$-decomposition. Then
\begin{equation}             \label{eq:I}
\text{$A$ lowers $\{V'_i\}_{i=0}^d$ and $B$ is irreducible tridiagonal on $\{V'_i\}_{i=0}^d$}.
\end{equation}
\item[\rm (ii)]
Let $\{V''_i\}_{i=0}^d$ be the $(B,C)$-decomposition. Then
\begin{equation}             \label{eq:II}
\text{$B$ lowers $\{V''_i\}_{i=0}^d$ and $A$ is irreducible tridiagonal on $\{V''_i\}_{i=0}^d$}.
\end{equation}
\end{itemize}
\end{lemma}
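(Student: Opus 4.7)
The plan is to prove part (i); part (ii) then follows by the same argument with the roles of $A$ and $B$ interchanged. Since $\{V'_i\}_{i=0}^d$ is by definition the decomposition lowered by $A$ and raised by $C$, the statement that $A$ lowers $\{V'_i\}_{i=0}^d$ is immediate, so the content is to show that $B$ is irreducible tridiagonal on $\{V'_i\}_{i=0}^d$. The guiding observation is that both the $(A,B)$-decomposition $\{V_i\}_{i=0}^d$ and the $(A,C)$-decomposition refine the same kernel flag of $A$,
$$\Ker A^{i+1} \;=\; V_0 + \cdots + V_i \;=\; V'_0 + \cdots + V'_i,$$
and similarly the $(B,C)$-decomposition $\{V''_i\}_{i=0}^d$ and the $(A,C)$-decomposition refine the same kernel flag of $C$,
$$\Ker C^{d-i+1} \;=\; V''_i + \cdots + V''_d \;=\; V'_i + \cdots + V'_d.$$

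First I would establish tridiagonality. Because $B$ raises $\{V_i\}_{i=0}^d$, it sends $V_0 + \cdots + V_i$ into $V_1 + \cdots + V_{i+1}$, so $B$ carries $\Ker A^{i+1}$ into $\Ker A^{i+2}$; rewriting in $(A,C)$-coordinates gives $B V'_i \subseteq V'_0 + \cdots + V'_{i+1}$. Dually, because $B$ lowers $\{V''_i\}_{i=0}^d$, it carries $\Ker C^{d-i+1}$ into $\Ker C^{d-i+2}$, and rewriting in $(A,C)$-coordinates gives $B(V'_i + \cdots + V'_d) \subseteq V'_{i-1} + V'_i + \cdots + V'_d$. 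Intersecting the two containments yields $B V'_i \subseteq V'_{i-1} + V'_i + V'_{i+1}$.

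For the irreducibility I plan to argue pointwise. Given nonzero $v \in V'_i$, to show the $V'_{i+1}$-component of $Bv$ is nonzero (for $i \leq d-1$) I would expand $v$ in the $(A,B)$-decomposition: since $v \not\in \Ker A^i = V'_0 + \cdots + V'_{i-1}$, the $V_i$-component of $v$ is nonzero, and both $B\colon V_i \to V_{i+1}$ and $A^{i+1}\colon V_{i+1} \to V_0$ are isomorphisms of one-dimensional spaces, so $A^{i+1}Bv \ne 0$; this rules out $Bv \in \Ker A^{i+1} = V'_0 + \cdots + V'_i$. The nonvanishing of the $V'_{i-1}$-component (for $i \geq 1$) I would obtain by the dual computation: expand $v$ in the $(B,C)$-decomposition $\{V''_i\}_{i=0}^d$, use $v \not\in \Ker C^{d-i} = V'_{i+1} + \cdots + V'_d$ to isolate a nonzero $V''_i$-component, and apply $C^{d-i+1}B$ to show $Bv \not\in \Ker C^{d-i+1} = V'_i + \cdots + V'_d$.

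The argument is not deep; the main obstacle I anticipate is organizational rather than conceptual, namely keeping track of which of the three decompositions appears in each containment and matching the reversed indexings that arise when the same kernel flag is described via a lowering decomposition on one side and a raising decomposition on the other. Once the two dual flags (of $A$ and of $C$) are both expressed in $(A,C)$-coordinates, tridiagonality is forced by intersection, and irreducibility follows by propagating a single nonzero graded component through the relevant one-dimensional isomorphisms.
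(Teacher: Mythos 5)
The paper does not prove this lemma; it cites it from \cite[Proposition 13.39]{T:LRT}, so there is no in-paper proof to compare against, and I assess your argument on its own terms. It is correct and self-contained. The two filtration identities you state do hold: since $A$ lowers both $\{V_i\}_{i=0}^d$ (the $(A,B)$-decomposition) and $\{V'_i\}_{i=0}^d$, one has $V_0+\cdots+V_i = \Ker A^{i+1} = V'_0+\cdots+V'_i$, and dually $C$ raises both $\{V''_i\}_{i=0}^d$ and $\{V'_i\}_{i=0}^d$ so that $V''_i+\cdots+V''_d = \Ker C^{d-i+1} = V'_i+\cdots+V'_d$. Tridiagonality then drops out by intersecting the two filtration-compatibility containments, and irreducibility by propagating a nonzero graded component through one-dimensional isomorphisms, exactly as you describe.

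One small point worth spelling out: in the irreducibility step, the inference ``since $v \not\in \Ker A^i$, the $V_i$-component of $v$ is nonzero'' is incomplete as stated, because $v \not\in V_0+\cdots+V_{i-1}$ only guarantees a nonzero component in some degree $\geq i$. What pins it to degree exactly $i$ is the additional containment $v \in V'_i \subseteq \Ker A^{i+1} = V_0+\cdots+V_i$, which forces $v_j = 0$ for $j > i$. You are certainly using this implicitly (the same identity is invoked a line earlier), and the analogous remark applies to the dual computation with $\Ker C^{d-i+1}$; making it explicit would tighten the write-up. With that clarification, the proof is complete.
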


In Lemma \ref{lem:based}, (ii) is obtained from 
(i) by exchanging $A$ and $B$.
For a while, we focus on (i).
Our first main result follows.

\begin{theorem}    \label{thm:main}    \samepage
\ifDRAFT {\rm thm:main}. \fi
Let $A,B$ be an LR pair on $V$ and let $\{V'_i\}_{i=0}^d$ be a decomposition of $V$.
Then the following are equivalent:
\begin{itemize}
\item[\rm (i)]
$A$ and $B$ satisfy \eqref{eq:I};
\item[\rm (ii)]
there exists an element $C$ in $\text{\rm End}(V)$ such that $A,B,C$ is an LR triple
and $\{V'_i\}_{i=0}^d$ is the $(A,C)$-decomposition.
\end{itemize}
\end{theorem}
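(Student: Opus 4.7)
The direction (ii)$\,\Rightarrow\,$(i) is immediate from Lemma \ref{lem:based}(i), so the substance lies in (i)$\,\Rightarrow\,$(ii). My plan is to construct $C$ concretely. Rescale so as to pick $v'_i\in V'_i$ with $Av'_i=v'_{i-1}$ for $1\le i\le d$ (possible since $A$ lowers $\{V'_i\}$), and write the irreducible tridiagonal action of $B$ as
\[
B v'_i \;=\; \alpha_i v'_{i-1} + \beta_i v'_i + \gamma_i v'_{i+1},
\]
with $\alpha_i\neq 0$ for $1\le i\le d$ and $\gamma_i\neq 0$ for $0\le i\le d-1$. Define $C\in\text{\rm End}(V)$ by $Cv'_i=c_iv'_{i+1}$ and $Cv'_d=0$, for nonzero scalars $c_0,\dots,c_{d-1}$ to be chosen. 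For any such $C$ the pair $A,C$ is automatically an LR pair with $(A,C)$-decomposition $\{V'_i\}$, so the task reduces to forcing $B,C$ to be an LR pair as well.

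To exhibit the $(B,C)$-decomposition I would use that $B$, being half of the LR pair $A,B$, is nilpotent with a single Jordan block of size $d+1$, so $\Ker B$ is one-dimensional; let $v''_0$ span it. A short triangular argument using $\alpha_i\neq 0$ shows that the $v'_0$-coefficient $\xi_0$ of $v''_0$ must be nonzero (otherwise the kernel equations force all $\xi_j=0$). Set $v''_i=C^iv''_0$ and let $V''_i$ be the line it spans. The LR-pair condition for $B,C$ becomes the requirement that $Bv''_i$ be a nonzero scalar multiple of $v''_{i-1}$ for $1\le i\le d$; expanding in the $\{v'_j\}$-basis and matching coefficients reduces this to a recursive system of scalar equations determining $c_1,\dots,c_{d-1}$ in terms of $c_0$ and the structure constants $\alpha_i,\beta_i,\gamma_i,\xi_j$.

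The main obstacle is verifying that this system admits a solution with all $c_i\neq 0$ and that the resulting $v''_0,\dots,v''_d$ are linearly independent. Consistency is where the nilpotency of $B$ does real work: each step of the recursion looks overdetermined (one unknown, several coefficient equations), but the nilpotency identities $\mathrm{tr}(B^k)=0$ force the competing coefficient equations to be compatible, so the recursion closes up. Nonvanishing of the $c_i$ then follows from the irreducibility conditions $\alpha_i,\gamma_i\neq 0$ together with $\xi_0\neq 0$, and linear independence of the $v''_i$ is automatic once all the $c_i$ are nonzero since $C^dv''_0$ is a nonzero multiple of $v'_d$. A cleaner route that I would pursue in parallel is to describe $\{V'_i\}$ via its change-of-basis data against the $(A,B)$-decomposition $\{V_i\}$ (both flags coincide with the standard flag of $A$, so the transition is unipotent) and then read off $C$ as a closed-form expression in $A$, $B$, and this transition, which would yield existence and an explicit formula at the same time.
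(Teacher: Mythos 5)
Your (ii)$\Rightarrow$(i) direction is fine, and the overall shape of your construction for (i)$\Rightarrow$(ii) — define $C$ as a raising map on $\{V'_i\}_{i=0}^d$ with some scalars $c_i$, so that $A,C$ is automatically an LR pair, then force $B,C$ to be an LR pair — matches the paper's strategy. But the heart of your argument has a genuine gap. You reduce to a recursive system of coefficient equations for $c_0,\ldots,c_{d-1}$ and, acknowledging that each step looks overdetermined, assert that "the nilpotency identities $\mathrm{tr}(B^k)=0$ force the competing coefficient equations to be compatible." That is not a proof; it is not clear why trace conditions on $B$ alone would resolve the overdetermination, and you do not derive the compatibility. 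As written, the consistency of your recursion — which is exactly where the theorem's content lives — is assumed rather than shown.

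What the paper does instead is make a \emph{specific} choice of scalars: it sets $C v'_i = \vphi_{d-i} v'_{i+1}$, where $\{\vphi_i\}_{i=1}^d$ is the parameter sequence of the LR pair $A,B$ in reversed order (Proposition \ref{prop:const}). With this choice, no recursion or ad hoc consistency check is needed. The verification is structural: since $A$ lowers both $\{V_i\}_{i=0}^d$ (the $(A,B)$-decomposition) and $\{V'_i\}_{i=0}^d$, the transition matrix between compatible bases is upper triangular Toeplitz (Lemma \ref{lem:Toeplitz}); Lemma \ref{lem:C} then shows the matrix of $C$ in the $(A,B)$-basis is the anti-diagonal transpose of the matrix of $B$ in the $\{v'_i\}$-basis, so $C$ inherits irreducible tridiagonality on $\{V_i\}_{i=0}^d$ from the hypothesis \eqref{eq:I}. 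Now $B$ and $C$ satisfy the two flag conditions of Proposition \ref{prop:LRpair}, which hands you that $B,C$ is an LR pair without any coefficient bookkeeping. Your closing remark about reading $C$ off from the unipotent transition against the $(A,B)$-decomposition is pointing at exactly this Toeplitz/anti-diagonal-transpose mechanism, but you leave it as a sketch. To close your argument you would need either to carry out that "cleaner route" in full, or to pin down the scalars $c_i$ as $\vphi_{d-i}$ and actually establish the compatibility you currently assert — which, in the paper's treatment, is precisely what the Toeplitz machinery and Proposition \ref{prop:LRpair} accomplish.
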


Referring to Theorem \ref{thm:main},
later (see Proposition \ref{prop:const}) we explicitly construct an element $C$
referred to in (ii).
Below we determine all such elements $C$.
There are two kinds of LR triples:  ``bipartite'' and ``nonbipartite'' (see Definition \ref{def:bipartite}).

\begin{theorem}   \label{thm:nonbipartite}    \samepage
\ifDRAFT {\rm thm:nonbipartite}. \fi
Let $A,B,C$  and $A,B,\tilde{C}$ be LR triples on $V$.
Assume that $A,B,C$ is nonbipartite.
Then the following are equivalent:
\begin{itemize}
\item[\rm (i)]
the $(A,C)$-decomposition is equal to the $(A,\tilde{C})$-decomposition;
\item[\rm (ii)]
there exists a nonzero scalar $\gamma$ in $\F$ such that 
$\tilde{C} = \gamma C$.
\end{itemize}
\end{theorem}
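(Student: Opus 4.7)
The direction (ii) $\Rightarrow$ (i) is immediate: if $\tilde{C}=\gamma C$ with $\gamma\neq 0$, then $\tilde{C}$ raises $\{V'_i\}_{i=0}^d$ exactly when $C$ does, so by uniqueness of the $(A,\tilde{C})$-decomposition it coincides with the $(A,C)$-decomposition.

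For (i) $\Rightarrow$ (ii), fix a basis $\{v_i\}_{i=0}^d$ with $0\neq v_i\in V'_i$ and write $Cv_i=c_iv_{i+1}$, $\tilde{C}v_i=\tilde{c}_iv_{i+1}$. By Lemma~\ref{lem:based}(i) the action $Bv_i=\beta_iv_{i-1}+\beta'_iv_i+\beta''_iv_{i+1}$ is irreducible tridiagonal with the same coefficients in both triples, since $B$ and $\{V'_i\}_{i=0}^d$ are the same. The goal reduces to showing that every ratio $\tilde{c}_i/c_i$ equals a single constant $\gamma$.

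The plan is to exploit that $B,C$ and $B,\tilde{C}$ are both LR pairs. The $0$-th subspace of each of the $(B,C)$- and $(B,\tilde{C})$-decompositions equals $\ker B$, which depends only on $B$ and is one-dimensional. Fix a generator $w_0=\sum_k s_k v_k$ of $\ker B$; its coordinates are pinned down (up to overall scale) by the three-term recursion $s_{k+1}\beta_{k+1}+s_k\beta'_k+s_{k-1}\beta''_{k-1}=0$ coming from $Bw_0=0$. Since the $(B,C)$-decomposition is then spanned by the successive vectors $C^j w_0$ (and likewise for $\tilde{C}$), the LR-pair axiom becomes $BC^j w_0\in\mathrm{span}(C^{j-1}w_0)$ for $1\leq j\leq d$, and analogously with $\tilde{C}$ in place of $C$. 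Expanding each inclusion in the basis $\{v_i\}$ yields an explicit recursive system in the $c_i$ (respectively $\tilde{c}_i$), with coefficients assembled from the $\beta$-data and the $s_k$.

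The final step is to argue that in the nonbipartite case this system determines each consecutive ratio $c_{i+1}/c_i$ uniquely; the identical system then forces $\tilde{c}_{i+1}/\tilde{c}_i=c_{i+1}/c_i$, so $\tilde{c}_i/c_i$ is a single constant $\gamma$ and $\tilde{C}=\gamma C$. Already the $j=1$ relations pin down $c_{i+1}/c_i$ in closed form at indices where the relevant $s_p$ and $\beta'$-coefficients are nonzero; the remaining indices must be filled in via the higher-order relations together with the nonbipartite hypothesis. The main obstacle is precisely this uniqueness argument: verifying that nonbipartiteness suffices to propagate the determination of the ratios through every index, and that the auxiliary quantities $s_p$ are not spoiled by degeneracies. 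An attractive alternative is to invoke the explicit construction of $C$ from $A,B,\{V'_i\}_{i=0}^d$ in Proposition~\ref{prop:const} and argue directly that this construction is unique up to an overall scalar in the nonbipartite case.
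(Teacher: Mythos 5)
Your (ii) $\Rightarrow$ (i) argument is correct and direct: if $\tilde{C}=\gamma C$ with $\gamma\neq 0$, then $\tilde{C}$ raises the $(A,C)$-decomposition, which is also lowered by $A$, so by uniqueness of the $(A,\tilde{C})$-decomposition (Lemma~\ref{lem:ABdecomp}) the two decompositions agree. This matches the paper in spirit, which invokes Lemma~\ref{lem:scalarmultiple}(ii).

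For (i) $\Rightarrow$ (ii), however, your proposal has a genuine gap, which you yourself flag. You reduce the claim to showing that the recursive system coming from $BC^jw_0\in\operatorname{span}(C^{j-1}w_0)$ determines the ratios $c_{i+1}/c_i$ uniquely, and then you write that ``the main obstacle is precisely this uniqueness argument'' and that the higher-order relations together with nonbipartiteness ``must be'' enough. That is exactly the content of the theorem, and it is not proved. In particular, the nonbipartite hypothesis enters nowhere concretely in your sketch, yet it is essential: in the bipartite case Theorem~\ref{thm:bipartite} shows that the answer is \emph{not} a single scalar $\gamma$ but two independent scalars $\gamma_{\text{out}},\gamma_{\text{in}}$ (so your system cannot determine the ratios uniquely there). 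An argument that does not isolate why nonbipartiteness breaks that even/odd freedom is incomplete. Your suggested alternative via Proposition~\ref{prop:const} has the same issue: that proposition constructs one particular $C$, but asserting that any admissible $\tilde{C}$ is a scalar multiple of it is precisely the statement to be proved, not a consequence of the construction.

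The paper's actual proof sidesteps the recursive-system analysis by working with the parameter array and Toeplitz data. It notes that the $(A,C)$- and $(A,\tilde{C})$-decompositions agreeing forces $\alpha'_i=\tilde{\alpha}'_i$ (Lemma~\ref{lem:alpha}), and that both triples share $\varphi_i$. Nonbipartiteness enters via Lemma~\ref{lem:alpha1}: $\alpha_1,\alpha'_1,\tilde\alpha_1$ are nonzero, allowing the definition $\gamma=\alpha_1\tilde\alpha_1^{-1}$. Setting $\bar{C}=\gamma C$, the scaling formulas in Lemma~\ref{lem:scalarmultiple} together with the identities $\alpha_1/\varphi_i=\alpha'_1/\varphi'_i$ from Lemma~\ref{lem:alpha1vphii} show that $B,\tilde{C}$ and $B,\bar{C}$ have the same parameter sequence and (by Lemma~\ref{lem:ACBCdecomp}) the same $(B,\cdot)$-decomposition; Lemma~\ref{lem:AB} then forces $\tilde{C}=\bar{C}$. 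This uses the structural constraints on an LR triple encoded in the Toeplitz data, rather than trying to solve the raw tridiagonal system, which is exactly where your approach stalls.
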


\begin{theorem}    \label{thm:bipartite}    \samepage
\ifDRAFT {\rm thm:bipartite}. \fi
Let $A,B,C$  and $A,B,\tilde{C}$ be LR triples on $V$.
Assume that $A,B,C$ is bipartite.
Then the following are equivalent:
\begin{itemize}
\item[\rm (i)]
the $(A,C)$-decomposition is equal to the $(A,\tilde{C})$-decomposition;
\item[\rm (ii)]
there exist nonzero scalars 
$\gamma_\text{\rm out}$, $\gamma_\text{\rm in}$ in $\F$ such that 
  $\tilde{C} = \gamma_\text{\rm out} C_\text{\rm out} +
                \gamma_\text{\rm in} C_\text{\rm in}$,
where $C_\text{\rm out}$ and $C_\text{\rm in}$ are from Definition \ref{def:XoutXin}.
\end{itemize}
\end{theorem}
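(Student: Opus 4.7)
The plan is to prove each direction separately, using the uniqueness of the LR-pair decomposition and the tridiagonal structure supplied by Lemma \ref{lem:based}.

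For the direction (ii)$\Rightarrow$(i): by Definition \ref{def:XoutXin}, $C_\text{out}$ and $C_\text{in}$ are the two components of $C$ acting between consecutive subspaces of the $(A,C)$-decomposition $\{V'_i\}_{i=0}^d$ according to parity, and each raises $\{V'_i\}_{i=0}^d$ on its support. Since $\gamma_\text{out}$ and $\gamma_\text{in}$ are nonzero, the combination $\tilde{C} = \gamma_\text{out}C_\text{out} + \gamma_\text{in}C_\text{in}$ still raises $\{V'_i\}_{i=0}^d$ on all of $V$. Together with $A$ lowering $\{V'_i\}_{i=0}^d$, this identifies $\{V'_i\}_{i=0}^d$ as the $(A,\tilde{C})$-decomposition by Lemma \ref{lem:ABdecomp}.

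For the direction (i)$\Rightarrow$(ii): since $C$ and $\tilde{C}$ both raise the common decomposition $\{V'_i\}_{i=0}^d$ and each $V'_i$ is $1$-dimensional, there exist unique nonzero scalars $\mu_0,\ldots,\mu_d\in\F$ with $\tilde{C}|_{V'_i} = \mu_i\, C|_{V'_i}$ for $0\leq i\leq d$. The claim reduces to proving that $\mu_i$ depends only on the parity of $i$, so that the common value on even (resp.\ odd) indices serves as $\gamma_\text{out}$ (resp.\ $\gamma_\text{in}$).

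The heart of the argument is extracting this parity condition from the requirement that $A,B,\tilde{C}$ is an LR triple, equivalently that $B,\tilde{C}$ is an LR pair. Applying Lemma \ref{lem:based}(i) to $A,B,C$, the operator $B$ is irreducible tridiagonal on $\{V'_i\}_{i=0}^d$, so one can write $BV'_i = b^{-}_i V'_{i-1} + b^{0}_i V'_i + b^{+}_i V'_{i+1}$ with $b^{\pm}_i\neq 0$; in the bipartite case (Definition \ref{def:bipartite}) all diagonal entries $b^{0}_i$ vanish. Picking adapted generators $v_i\in V'_i$ and computing the action of $B$ together with $C$ and with $\tilde{C}=C\Delta$ (where $\Delta=\sum_i \mu_i E'_i$ is diagonal on $\{V'_i\}_{i=0}^d$), the LR-pair condition for $B,\tilde{C}$ translates into an algebraic identity on each $V'_i$ linking $\mu_{i-1},\mu_i,\mu_{i+1}$ with coefficients built from $b^{\pm}_i,b^{0}_i$. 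When $b^{0}_i=0$, this identity collapses to the two-step recurrence forcing $\mu_i=\mu_{i-2}$, which yields the desired parity dependence. The main obstacle is this final computation: bookkeeping precisely which terms survive the vanishing of $b^{0}_i$ is what weakens the nonbipartite conclusion $\mu_i=\mu_{i-1}$ (which would recover Theorem \ref{thm:nonbipartite}) to the two-parameter freedom allowed here.
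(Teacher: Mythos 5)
Your direction (ii)$\Rightarrow$(i) is essentially right: since $V_\text{out}$ (resp.\ $V_\text{in}$) is the span of the even-indexed (resp.\ odd-indexed) pieces of the $(A,C)$-decomposition $\{V'_i\}$ (Lemma \ref{lem:Vout}), the element $\tilde{C}=\gamma_\text{out}C_\text{out}+\gamma_\text{in}C_\text{in}$ acts on each $V'_i$ as a nonzero multiple of $C$, hence raises $\{V'_i\}$; together with $A$ lowering $\{V'_i\}$ and the uniqueness in Lemma \ref{lem:ABdecomp}, this shows $\{V'_i\}$ is the $(A,\tilde{C})$-decomposition. The paper instead invokes Lemma \ref{lem:bipartitescalar2}(ii); both work.

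The real problem is (i)$\Rightarrow$(ii). Your reduction to proving that the ratio $\mu_i$ of $\tilde{C}$ to $C$ on $V'_i$ depends only on the parity of $i$ is the right target, but the argument you sketch does not establish it. You claim that ``the LR-pair condition for $B,\tilde{C}$ translates into an algebraic identity on each $V'_i$ linking $\mu_{i-1},\mu_i,\mu_{i+1}$'' built from the tridiagonal entries of $B$ on $\{V'_i\}$, collapsing to $\mu_i=\mu_{i-2}$ when the diagonal vanishes. That is not how the LR-pair condition for $B,\tilde{C}$ manifests: that condition concerns the $(B,\tilde{C})$-decomposition $\{V''_i\}$, a different decomposition from $\{V'_i\}$, so it is not a pointwise identity on the $V'_i$. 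No such three-term recurrence is exhibited, and you yourself flag that the ``final computation'' is the open obstacle --- so the proof is incomplete exactly where the content lies.

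What does close the gap is quite different from what you describe. One route (close in spirit to your reduction) is to expand the $(C,B)$-basis vectors in the $(C,A)$-basis via the transition matrix $T$ of Definition \ref{def:T}: $T$ is upper triangular Toeplitz (Lemma \ref{lem:T}), and in the bipartite case its odd Toeplitz parameters vanish (Lemma \ref{lem:bipartitealphai}); comparing $\tilde{C}w_i$ with a scalar multiple of $w_{i-1}$ then forces $\mu_j$ to be constant on each parity class. The constraint thus comes from the parity structure of the Toeplitz transition data, not directly from $B$'s vanishing diagonal entries. The paper takes another route: it pins down $\gamma_\text{out},\gamma_\text{in}$ explicitly from the parameter arrays and Toeplitz data (Notation \ref{notation}, Lemmas \ref{lem:vphi}, \ref{lem:alpha}), builds $\bar{C}=\gamma_\text{out}C_\text{out}+\gamma_\text{in}C_\text{in}$, uses Lemma \ref{lem:alpha2vphii} to match the $B,\tilde{C}$ and $B,\bar{C}$ parameter sequences, and concludes $\tilde{C}=\bar{C}$ by Lemma \ref{lem:AB}. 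You would need to carry out one of these (or an equivalent) computations; as written, the crucial step is asserted rather than proved, and the mechanism you propose for it is not correct.
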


In the above, we focused on \eqref{eq:I}.
By exchanging $A$ and $B$, we obtain similar results concerning \eqref{eq:II}.
For instance, from Theorem \ref{thm:main} we obtain:

\begin{corollary}    \label{cor:main}    \samepage
\ifDRAFT {\rm cor:main}. \fi
Let $A,B$ be an LR pair on $V$ and let $\{V''_i\}_{i=0}^d$ be a decomposition of $V$.
Then the following are equivalent:
\begin{itemize}
\item[\rm (i)]
$A$ and $B$ satisfy \eqref{eq:II};
\item[\rm (ii)]
there exists an element $C$ in $\text{\rm End}(V)$ such that $A,B,C$ is an LR triple
and $\{V''_i\}_{i=0}^d$ is the $(B,C)$-decomposition.
\end{itemize}
\end{corollary}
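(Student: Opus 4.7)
The plan is to derive Corollary \ref{cor:main} directly from Theorem \ref{thm:main} by interchanging the roles of $A$ and $B$.

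First I would record a brief symmetry observation: if $A,B$ is an LR pair on $V$ with $(A,B)$-decomposition $\{V_i\}_{i=0}^d$, then $B,A$ is also an LR pair on $V$, with decomposition $\{V_{d-i}\}_{i=0}^d$. This is immediate since reversing the indexing converts "lowering" to "raising" and vice versa. As a consequence, the property of being an LR triple is invariant under any transposition of the three operators, because it is defined by the symmetric requirement that each pair form an LR pair. In particular, $A,B,C$ is an LR triple if and only if $B,A,C$ is an LR triple.

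Next I would apply Theorem \ref{thm:main} with the substitution $(A,B) \mapsto (B,A)$ and $\{V'_i\}_{i=0}^d \mapsto \{V''_i\}_{i=0}^d$. Under this substitution, condition (i) of Theorem \ref{thm:main} becomes: $B$ lowers $\{V''_i\}_{i=0}^d$ and $A$ is irreducible tridiagonal on $\{V''_i\}_{i=0}^d$, which is precisely \eqref{eq:II}, i.e., condition (i) of the corollary. Condition (ii) of Theorem \ref{thm:main} becomes: there exists $C \in \text{End}(V)$ such that $B,A,C$ is an LR triple and $\{V''_i\}_{i=0}^d$ is the $(B,C)$-decomposition; by the symmetry noted above, $B,A,C$ is an LR triple if and only if $A,B,C$ is, so this is exactly condition (ii) of the corollary.

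The two conditions obtained therefore match (i) and (ii) of Corollary \ref{cor:main} verbatim, and the equivalence provided by Theorem \ref{thm:main} yields the corollary. There is essentially no obstacle here; the only point requiring care is to confirm that the LR triple condition is symmetric in its three entries, which reduces to the elementary observation about reversing a decomposition made in the first paragraph.
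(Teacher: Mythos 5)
Your proof is correct and follows exactly the approach the paper intends: the paper states that Corollary \ref{cor:main} is obtained from Theorem \ref{thm:main} by exchanging $A$ and $B$, and you carry out that substitution, correctly noting (via Lemma \ref{lem:BAdecomp}) that the LR pair and LR triple properties are symmetric under such an exchange.
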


We now consider the conditions \eqref{eq:I} and \eqref{eq:II} at the same time.
In this case, we can start with any pair of elements $A,B$ rather than an LR pair
$A,B$.
Actually, the following holds:

\begin{proposition}    \label{prop:LRpair}    \samepage
\ifDRAFT {\rm prop:LRpair}. \fi
Let $A,B$ be elements in $\text{\rm End}(V)$.
Assume that there exist  decompositions $\{V'_i\}_{i=0}^d$ and $\{V''_i\}_{i=0}^d$ of $V$
that satisfy \eqref{eq:I} and \eqref{eq:II}, respectively.
Then $A,B$ is an LR pair.
\end{proposition}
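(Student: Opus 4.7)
The plan is to construct the candidate $(A,B)$-decomposition of $V$ directly as
\[
V_i := \ker A^{i+1} \cap \ker B^{d-i+1} \qquad (0 \leq i \leq d),
\]
and verify that $A$ lowers and $B$ raises it.

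First I would collect basic flag data. The lowering hypothesis for $A$ implies that $A$ is nilpotent of index $d+1$ with kernel flag $\ker A^{j+1} = V'_0 + V'_1 + \cdots + V'_j$, and analogously for $B$ with respect to $\{V''_i\}_{i=0}^d$. The tridiagonal conditions yield the key flag-compatibility: from $A V''_k \subseteq V''_{k-1} + V''_k + V''_{k+1}$ one reads off $A (\ker B^j) \subseteq \ker B^{j+1}$, and symmetrically $B (\ker A^j) \subseteq \ker A^{j+1}$. Applying these to $V_i$ gives $A V_i \subseteq \ker A^i \cap \ker B^{d-i+2} = V_{i-1}$ and $B V_i \subseteq \ker A^{i+2} \cap \ker B^{d-i} = V_{i+1}$, so $A$ and $B$ shift the $V_i$'s in the desired way, provided $\{V_i\}_{i=0}^d$ really is a decomposition of $V$.

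To witness the decomposition I would construct a basis from the $\{V'_j\}_{j=0}^d$ side. Fix $0 \neq u_0 \in V'_0$ and set $u_i := B^i u_0$. Using the irreducible tridiagonal form of $B$ on $\{V'_j\}_{j=0}^d$, an induction shows that $u_i \in V'_0 + \cdots + V'_i$ and that the $V'_i$-coefficient of $u_i$ is the product of the nonzero super-diagonal entries of $B$; in particular $u_0, u_1, \ldots, u_d$ is a basis of $V$. Because $u_0 \in \ker A$ and $B (\ker A^k) \subseteq \ker A^{k+1}$, each $u_i \in \ker A^{i+1}$; because $B^{d+1} = 0$, each $u_i \in \ker B^{d-i+1}$. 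Hence $u_i \in V_i$.

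To finish I would identify $V_i = \F u_i$. Since $\dim \ker A^j = j$ and $u_0, \ldots, u_{j-1}$ are $j$ linearly independent elements of $\ker A^j$, we conclude $\ker A^j = \mathrm{span}\{u_0, \ldots, u_{j-1}\}$; dually $\ker B^{d-j+1} = \mathrm{span}\{u_j, \ldots, u_d\}$. These are disjoint sub-bases of $V$, so their intersection is $\F u_i$, whence $V_i = \F u_i$ is $1$-dimensional and $V = V_0 \oplus V_1 \oplus \cdots \oplus V_d$. The inclusions $A V_i \subseteq V_{i-1}$ and $B V_i \subseteq V_{i+1}$ then upgrade to equalities, since $V_i$ is disjoint from $V_0 = \ker A$ for $i \geq 1$ and from $V_d = \ker B$ for $i \leq d-1$. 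Thus $A,B$ is an LR pair.

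The main obstacle I anticipate is showing $\dim V_i = 1$: the naive inclusion--exclusion bound only yields $\dim V_i \geq 1$, and tridiagonality by itself does not pin down the intersection. The remedy is the explicit basis $\{u_i\}_{i=0}^d$, which realizes the two kernel flags $\ker A^{\bullet}$ and $\ker B^{d+1-\bullet}$ simultaneously by disjoint sub-bases and thereby makes the intersection transparent.
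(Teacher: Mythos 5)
Your proof is correct and takes a genuinely different route. Unwinding the flag notation, your candidate decomposition $V_i = \ker A^{i+1} \cap \ker B^{d-i+1}$ coincides with the paper's $Z_i = U'_i \cap U''_{d-i}$, where $U'_i = V'_0 + \cdots + V'_i$ and $U''_j = V''_0 + \cdots + V''_j$. But where the paper establishes that $\{Z_i\}$ is a decomposition by an abstract argument --- it first proves a ``no common invariant subspace'' lemma for a flag lowered by $A$ and raised by $B$, deduces from it that the two induced flags are \emph{opposite}, gets $\dim Z_i = 1$ by dimension counting, and then invokes the invariant-subspace lemma a second time to promote the inclusions $AZ_i \subseteq Z_{i-1}$, $B Z_i \subseteq Z_{i+1}$ to equalities --- you instead build the decomposition by hand: the basis $u_i = B^i u_0$ realizes both kernel flags simultaneously by nested sub-bases, so the intersections $V_i = \F u_i$ are transparent. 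Your route is more concrete, and it is worth noting that it uses the irreducibility of $B$ on $\{V'_i\}$ but only the (possibly reducible) tridiagonality of $A$ on $\{V''_i\}$; the paper's symmetric argument draws on irreducibility on both sides. The one small slip is in your description of the $V'_i$-coefficient of $u_i$: with the paper's convention (basis vectors $v_j \in V'_j$, so $B v_j = B_{j-1,j}v_{j-1} + B_{j,j}v_j + B_{j+1,j}v_{j+1}$), the coefficient of $v_i$ in $B^i v_0$ is the product $B_{1,0}B_{2,1}\cdots B_{i,i-1}$ of \emph{sub}-diagonal entries, not super-diagonal ones --- but these are exactly the entries that irreducible tridiagonality makes nonzero, so the argument stands.
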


Combining Theorem \ref{thm:main}, Corollary \ref{cor:main} and Proposition \ref{prop:LRpair}
we obtain:

\begin{corollary}   \label{cor:main1}    \samepage
\ifDRAFT {\rm thm:main1}. \fi
Let $A,B$ be elements in $\text{\rm End}(V)$.
Let $\{V'_i\}_{i=0}^d$ and $\{V''_i\}_{i=0}^d$ be decompositions of $V$
that satisfy \eqref{eq:I} and \eqref{eq:II}, respectively.
Then the following hold.
\begin{itemize}
\item[\rm (i)]
There exists an element $C$ in $\text{\rm End}(V)$
such that $A,B,C$ is an LR triple and
$\{V'_i\}_{i=0}^d$ is the $(A,C)$-decomposition.
\item[\rm (ii)]
There exists an element $C'$ in $\text{\rm End}(V)$
such that $A,B,C'$ is an LR triple and
$\{V''_i\}_{i=0}^d$ is the $(B,C')$-decomposition.
\end{itemize}
\end{corollary}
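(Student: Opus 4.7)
The proof is essentially an assembly of three results stated earlier in the excerpt, so the main task is to verify that the hypotheses line up in the correct order. The plan is to first produce an LR pair from the pair $A,B$ by invoking Proposition \ref{prop:LRpair}, and then separately feed each decomposition into Theorem \ref{thm:main} and Corollary \ref{cor:main} to produce $C$ and $C'$.

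More concretely: since the decompositions $\{V'_i\}_{i=0}^d$ and $\{V''_i\}_{i=0}^d$ are assumed to satisfy \eqref{eq:I} and \eqref{eq:II}, respectively, Proposition \ref{prop:LRpair} applies verbatim and gives that $A,B$ is an LR pair on $V$. At this point the hypothesis of Theorem \ref{thm:main} is met, with $A,B$ the LR pair and $\{V'_i\}_{i=0}^d$ the target decomposition satisfying condition (i) of that theorem (namely \eqref{eq:I}). The equivalence in Theorem \ref{thm:main} then supplies an element $C \in \text{End}(V)$ such that $A,B,C$ is an LR triple with $(A,C)$-decomposition equal to $\{V'_i\}_{i=0}^d$, establishing part (i) of the corollary.

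For part (ii), I would repeat the same reasoning but with the second decomposition. The LR pair $A,B$ is already in hand from Proposition \ref{prop:LRpair}, and $\{V''_i\}_{i=0}^d$ satisfies \eqref{eq:II} by assumption, which is condition (i) of Corollary \ref{cor:main}. Applying Corollary \ref{cor:main} then yields an element $C' \in \text{End}(V)$ such that $A,B,C'$ is an LR triple with $(B,C')$-decomposition equal to $\{V''_i\}_{i=0}^d$.

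There is no real obstacle here beyond bookkeeping: the content of the corollary is already distributed across the three ingredients, so the proof amounts to checking that the hypotheses match and invoking each result once. The only subtlety worth flagging is that the $C$ and $C'$ produced for (i) and (ii) need not coincide in general; the corollary only asserts that each individual extension exists, not that a single element simultaneously witnesses both decompositions.
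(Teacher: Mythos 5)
Your proof is correct and matches the paper's intent exactly: the paper states Corollary \ref{cor:main1} follows by "combining Theorem \ref{thm:main}, Corollary \ref{cor:main} and Proposition \ref{prop:LRpair}," which is precisely the assembly you carry out. The closing observation that $C$ and $C'$ need not coincide is a useful remark and is consistent with the paper's subsequent discussion leading to Theorem \ref{thm:main1}.
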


In Corollary \ref{cor:main1},
the decomposition $\{V''_i\}_{i=0}^d$ may not be
equal to the $(B,C)$-decomposition.
So we add  an extra condition $V'_d = V''_d$
(see Lemma \ref{lem:coincide}):

\begin{theorem}   \label{thm:main1}    \samepage
\ifDRAFT {\rm thm:main1}. \fi
Let $A,B$ be elements in $\text{\rm End}(V)$.
Let  $\{V'_i\}_{i=0}^d$ and $\{V''_i\}_{i=0}^d$ be decompositions of $V$.
Then the following are equivalent:
\begin{itemize}
\item[\rm (i)]
the decompositions satisfy $V'_d = V''_d$ and  \eqref{eq:I}, \eqref{eq:II};
\item[\rm (ii)]
there exists an element $C$ in $\text{\rm End}(V)$
such that 
\begin{itemize}
\item[\rm (a)]
$A,B,C$ is an LR triple;
\item[\rm (b)]
$\{V'_i\}_{i=0}^d$ is the $(A,C)$-decomposition;
\item[\rm (c)]
$\{V''_i\}_{i=0}^d$ is the $(B,C)$-decomposition.
\end{itemize}
\end{itemize}
\end{theorem}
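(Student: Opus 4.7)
The plan is to handle the two directions of the equivalence separately, with the forward direction following directly from Lemma~\ref{lem:based} and the reverse direction combining Proposition~\ref{prop:LRpair}, Theorem~\ref{thm:main}, and a short propagation argument.

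For the direction (ii)~$\Rightarrow$~(i), Lemma~\ref{lem:based}(i) and (ii) give \eqref{eq:I} and \eqref{eq:II} at once. For the equality $V'_d = V''_d$, I would use the general observation that if a decomposition $\{U_i\}_{i=0}^d$ is raised by a map $X$ belonging to an LR pair, then $X U_d = 0$ and, since the image of $X$ is $U_1 + \cdots + U_d$ of dimension $d$, we have $\dim\ker X = 1$ and hence $U_d = \ker X$. Applying this to the $(A,C)$- and $(B,C)$-decompositions yields $V'_d = \ker C = V''_d$.

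For the substantive direction (i)~$\Rightarrow$~(ii), I would first apply Proposition~\ref{prop:LRpair} to conclude that $A,B$ is an LR pair, and then invoke Theorem~\ref{thm:main} on $\{V'_i\}_{i=0}^d$ using \eqref{eq:I} to produce an element $C \in \text{End}(V)$ such that $A,B,C$ is an LR triple with $\{V'_i\}_{i=0}^d$ as the $(A,C)$-decomposition; this already gives (ii)(a) and (b). The remaining task is to show that the \emph{same} $C$ has $\{V''_i\}_{i=0}^d$ as its $(B,C)$-decomposition. Letting $\{W_i\}_{i=0}^d$ denote that decomposition, the kernel identification gives $W_d = \ker C = V'_d$, so the hypothesis $V'_d = V''_d$ forces $W_d = V''_d$. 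Since $B$ lowers both $\{W_i\}_{i=0}^d$ and, by \eqref{eq:II}, $\{V''_i\}_{i=0}^d$, the downward induction $W_{i-1} = B W_i = B V''_i = V''_{i-1}$ extends this equality to every index, yielding $W_i = V''_i$ for $0 \leq i \leq d$.

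There is no serious obstacle; the extra hypothesis $V'_d = V''_d$ is engineered precisely so that the $(B,C)$-decomposition coming out of Theorem~\ref{thm:main} is forced to coincide with $\{V''_i\}_{i=0}^d$, with no further appeal to Theorems~\ref{thm:nonbipartite} or \ref{thm:bipartite} needed. I would extract the downward-induction step as a small lemma (already flagged in the text as Lemma~\ref{lem:coincide}), since it is a clean general statement: two decompositions lowered by a common map and agreeing at the top index agree everywhere.
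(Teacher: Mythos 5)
Your proposal is correct and follows essentially the same route as the paper: Proposition~\ref{prop:LRpair} plus Theorem~\ref{thm:main} for the construction, Lemma~\ref{lem:based} for the converse, and the $V'_d = \ker C = V''_d$ kernel identification together with downward induction (the paper's Lemma~\ref{lem:Vicoincide}) to force the $(B,C)$-decomposition to agree with $\{V''_i\}_{i=0}^d$. The paper packages this last step as Lemma~\ref{lem:coincide}, exactly the extraction you suggest.
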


Let $\tilde{C}$ be an element in $\text{End}(V)$.
With reference to Theorem \ref{thm:main1},
consider the conditions (a)--(c), in which
$C$ is replaced by $\tilde{C}$:
\begin{itemize}     \samepage
\item[$(\tilde{\rm a})$]
$A,B, \tilde{C}$ is an LR triple;
\item[$(\tilde{\rm b})$]
$\{V'_i\}_{i=0}^d$ is the $(A, \tilde{C})$-decomposition;
\item[$(\tilde{\rm c})$]
$\{V''_i\}_{i=0}^d$ is the $(B, \tilde{C})$-decomposition.
\end{itemize}

\begin{theorem}    \label{thm:main2}        \samepage
\ifDRAFT {\rm thm:main2}. \fi
With reference to Theorem \ref{thm:main1},
assume that the equivalent conditions {\rm (i)} and {\rm (ii)} hold.
Fix an element $C$ in $\text{\rm End}(V)$ that satisfies {\rm (a)--(c)}.
Let $\tilde{C}$ be an element in $\text{\rm End}(V)$.
Then the following hold.
\begin{itemize}
\item[\rm (i)]
Assume that $A,B,C$ is nonbipartite.
Then $\tilde{C}$ satisfies $(\tilde{\rm a})$--$(\tilde{\rm c})$ if and only if
there exists a nonzero scalar $\gamma$ in $\F$ such that
$\tilde{C} = \gamma C$.
\item[\rm (ii)]
Assume that $A,B,C$ is bipartite.
Then $\tilde{C}$ satisfies  $(\tilde{\rm a})$--$(\tilde{\rm c})$ if and only if
there exist nonzero scalars $\gamma_\text{\rm out}$, $\gamma_\text{\rm in}$
in $\F$ such that
$\tilde{C} =
  \gamma_\text{\rm out} C_\text{\rm out} + \gamma_\text{\rm in} C_\text{\rm in}$.
\end{itemize}
\end{theorem}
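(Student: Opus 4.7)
The plan is to separate the easy forward implication (which is common to both cases) from the more substantive backward implications (which diverge in the nonbipartite and bipartite settings).

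For the forward direction in both (i) and (ii), suppose $\tilde{C}$ satisfies $(\tilde{\rm a})$--$(\tilde{\rm c})$. By $(\tilde{\rm a})$, $A,B,\tilde{C}$ is an LR triple, and by $(\tilde{\rm b})$ together with (b), the $(A,\tilde{C})$-decomposition coincides with the $(A,C)$-decomposition $\{V'_i\}_{i=0}^d$. Hence Theorem \ref{thm:nonbipartite} applies in case (i) and Theorem \ref{thm:bipartite} applies in case (ii), yielding the stated form of $\tilde{C}$. Note that $(\tilde{\rm c})$ plays no role here; both theorems are about the $(A, \cdot)$-decomposition, and $(\tilde{\rm b})$ supplies exactly what they require.

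For the backward direction of (i), assume $\tilde{C} = \gamma C$ with $\gamma \neq 0$. Each of the six ordered pairs drawn from $\{A,B,C\}$ is an LR pair with some associated decomposition; I would verify that replacing $C$ by $\gamma C$ in any such pair yields another LR pair with the same underlying decomposition, using that nonzero scaling of a $1$-dimensional subspace leaves the subspace unchanged. This immediately gives $(\tilde{\rm a})$, and it gives $(\tilde{\rm b})$ and $(\tilde{\rm c})$ since the $(A,\tilde{C})$- and $(B,\tilde{C})$-decompositions coincide respectively with the $(A,C)$- and $(B,C)$-decompositions, which by (b) and (c) are $\{V'_i\}_{i=0}^d$ and $\{V''_i\}_{i=0}^d$.

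For the backward direction of (ii), assume $\tilde{C} = \gamma_{\rm out} C_{\rm out} + \gamma_{\rm in} C_{\rm in}$ with $\gamma_{\rm out}, \gamma_{\rm in}$ nonzero. Here I would invoke the structural description of $C_{\rm out}$, $C_{\rm in}$ from Definition \ref{def:XoutXin} together with the bipartite hypothesis from Definition \ref{def:bipartite}. The expected behavior is that $C_{\rm out}$ and $C_{\rm in}$ act on complementary graded pieces of each of the six decompositions associated with the LR triple $A,B,C$, so that any linear combination with both coefficients nonzero still raises every $V'_i$ and every $V''_i$, is still lowered by $A$ and by $B$ on the appropriate decompositions, and so on. Checking each of the six ordered pairs yields $(\tilde{\rm a})$, while $(\tilde{\rm b})$ and $(\tilde{\rm c})$ then follow as in the nonbipartite case because the underlying decompositions are unchanged.

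The main obstacle is exactly this last verification: unlike the nonbipartite case, a genuine two-parameter family of admissible $\tilde{C}$ appears, reflecting the fact that in a bipartite LR triple the ``outer'' and ``inner'' pieces of $C$ can be scaled independently without breaking any LR pair condition. Thus the proof hinges on extracting from Definition \ref{def:XoutXin} enough information about how $C_{\rm out}$ and $C_{\rm in}$ interact with both $\{V'_i\}_{i=0}^d$ and $\{V''_i\}_{i=0}^d$; once those formulas are in hand, checking that $\tilde{C}$ raises and is lowered in the required ways on all six decompositions is routine, and the proof concludes.
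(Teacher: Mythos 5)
Your proof is correct, and your forward direction matches the paper exactly: use $(\tilde{\rm a})$ and $(\tilde{\rm b})$ together with (b) to conclude the $(A,\tilde C)$-decomposition equals the $(A,C)$-decomposition, then invoke Theorem \ref{thm:nonbipartite} or Theorem \ref{thm:bipartite}. For the backward direction the paper takes a shorter route than you propose: instead of checking the LR pair conditions for $A,\tilde C$ and $B,\tilde C$ by hand, it cites Lemma \ref{lem:scalarmultiple} in the nonbipartite case and Lemma \ref{lem:bipartitescalar2} in the bipartite case, both of which assert that the modified triple is again an LR triple \emph{with the same idempotent data}; since the idempotent data encodes all three decompositions, $(\tilde{\rm b})$ and $(\tilde{\rm c})$ follow immediately. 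Your direct verification also works --- for instance, in the bipartite case the $(A,C)$- and $(B,C)$-decompositions each split into out/in pieces by parity (using Lemmas \ref{lem:BAdecomp} and \ref{lem:Vout} plus $d$ even), so that $C_{\rm out}$ sends even pieces to odd pieces and $C_{\rm in}$ sends odd to even, whence $\gamma_{\rm out}C_{\rm out}+\gamma_{\rm in}C_{\rm in}$ raises each of these decompositions when both scalars are nonzero --- but you left this as a sketch. The paper's citation-based argument is the cleaner of the two precisely because it absorbs this bookkeeping into already-established lemmas; if you want a self-contained argument, you would need to carry out the parity computation above for both $\{V'_i\}$ and $\{V''_i\}$ rather than leave it as ``the expected behavior.''
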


The paper is organized as follows.
In Section \ref{sec:LRpair} we recall some basic facts concerning an LR pair.
In Section \ref{sec:prop} we prove Proposition \ref{prop:LRpair}.
In Section \ref{sec:Toeplitz} we recall the Toeplitz matrices,
and prove some lemmas for later use.
In Section \ref{sec:thmmain} we prove Theorem \ref{thm:main}.
In Section \ref{sec:thmmain1} we prove Theorem \ref{thm:main1}.
In Section \ref{sec:bipartite} we recall some facts concerning  a bipartite LR triple.
In Section \ref{sec:pre} we recall some data for an LR triple.
In Section \ref{sec:proofs} we prove Theorems \ref{thm:nonbipartite},
\ref{thm:bipartite}, \ref{thm:main2}.

\section{LR pairs}
\label{sec:LRpair}

In this section we recall some basic facts concerning an LR pair.
For $X$ in $\text{End}(V)$, we denote the kernel of $X$ by $\text{Ker}\, X$.
For a decomposition $\{V_i\}_{i=0}^d$ of $V$, we define $V_{-1}=0$ and $V_{d+1}=0$.

\begin{lemma}    \label{lem:Alowers}    \samepage
\ifDRAFT {\rm lem:Alowers}. \fi
Let $A$ be an element in $\text{\rm End}(V)$,
and let $\{V_i\}_{i=0}^d$ be a decomposition of $V$
that is lowered by $A$.
Then the following hold.
\begin{itemize}
\item[\rm (i)]
For $0 \leq i \leq j \leq d$,  $A^i V_j = V_{j-i}$.
Moreover, the restriction of $A^i$ on $V_j$ induces a bijection from $V_j$ to $V_{j-i}$.
\item[\rm (ii)]
$V_0 = \text{\rm Ker} \, A$.
\end{itemize}
\end{lemma}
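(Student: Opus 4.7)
The plan is to prove (i) by induction on $i$, and then deduce (ii) either directly by decomposing a kernel element or via a dimension count using (i).

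For (i), I would induct on $i$. The base case $i=0$ is immediate since $A^0$ is the identity on $V_j$. For the inductive step, assuming $A^i V_j = V_{j-i}$ for some $i < j$, apply $A$ to get $A^{i+1} V_j = A(A^i V_j) = A V_{j-i}$, which equals $V_{j-i-1}$ because $A$ lowers $\{V_i\}_{i=0}^d$ and the index $j-i$ lies in $\{1,\ldots,d\}$ (since $i+1 \leq j$ forces $j-i \geq 1$). For the bijection statement, note that each $V_k$ has dimension $1$, so $A^i$ restricts to a linear map between two one-dimensional spaces whose image is all of $V_{j-i}$; a surjection between one-dimensional spaces is automatically a bijection.

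For (ii), the inclusion $V_0 \subseteq \text{Ker}\, A$ follows immediately from $A V_0 = V_{-1} = 0$. For the reverse inclusion, I would offer two options, and probably choose the dimension-count argument because it is cleaner. Writing $\text{Im}\, A = \sum_{i=0}^d A V_i = \sum_{i=1}^d V_{i-1}$, which is a direct sum of $d$ one-dimensional subspaces and so has dimension $d$, the rank-nullity theorem gives $\dim \text{Ker}\, A = 1$. Combined with $V_0 \subseteq \text{Ker}\, A$ and $\dim V_0 = 1$, equality follows. Alternatively, one could take $v \in \text{Ker}\, A$, write $v = \sum_{i=0}^d v_i$ with $v_i \in V_i$, observe that $Av = \sum_{i=1}^d A v_i$ is a sum of elements with $A v_i \in V_{i-1}$ lying in distinct direct summands, deduce $A v_i = 0$ for $1 \leq i \leq d$, and then invoke part (i) (the case $i=1$) to conclude $v_i = 0$ for $i \geq 1$.

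There is no real obstacle here: the only thing to be careful about is using the right index range in the inductive step (ensuring $j - i \geq 1$ before invoking the lowering property), and being explicit that the direct-sum decomposition lets one compare components separately. Both arguments are short, so I would present (i) by induction and (ii) via the rank-nullity route for brevity.
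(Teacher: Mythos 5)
Your proof is correct. For part (i) you do essentially what the paper does: the paper simply asserts $A^i V_j = V_{j-i}$ as clear and then notes that a surjection between one-dimensional spaces is a bijection; your explicit induction on $i$ is just a spelled-out version of the same. For part (ii) your preferred route is genuinely different from the paper's. The paper argues componentwise: take $v \in \text{Ker}\, A$, expand $v = \sum_i v_i$ in the decomposition, use the direct sum to force $A v_i = 0$ for $i \geq 1$, and then invoke the bijectivity from (i) to conclude $v_i = 0$ --- exactly the second option you sketch. Your dimension count via rank--nullity is a cleaner alternative: it sidesteps any componentwise bookkeeping and does not rely on part (i) at all, at the cost of invoking rank--nullity. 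Both are short and sound; the rank--nullity route is arguably more conceptual, while the paper's componentwise argument stays entirely elementary and reuses (i).
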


\begin{proof}
(i):
Clearly we have $A^i V_j = V_{j-i}$.
The second assertion follows since each of $V_j$, $V_{j-i}$ has dimension one.
 
(ii):
We have $V_0 \subseteq \text{Ker} \, A$ since $A V_0 = 0$.
To see the inverse inclusion, pick any $v \in \text{Ker}\, A$.
Write $v = \sum_{i=0}^d v_i$ with $v_i \in V_i$ $(0 \leq i \leq d)$.
Then $0 = A v = \sum_{i=1}^d A v_i$ and $A v_i  \in V_{i-1}$ for $1 \leq i \leq d$.
So $A v_i = 0$ for $1 \leq i \leq d$.
This forces $v_i = 0$ for $1 \leq i \leq d$, since $A$ induces the bijection from $V_i$ to $V_{i-1}$.
Now $v = v_0$, so $v \in V_0$.
Thus $V_0 \supseteq \text{Ker} \, A$.
The result follows.
\end{proof}

\begin{lemma}    \label{lem:Vicoincide}     \samepage
\ifDRAFT {\rm lem:Vicoincide}. \fi
Let $A$ be an element in $\text{\rm End}(V)$.
Let $\{V_i\}_{i=0}^d$ and $\{\tilde{V}_i\}_{i=0}^d$ be decompositions of $V$,
each of which is lowered by $A$.
Assume $V_d = \tilde{V}_d$.
Then $V_i = \tilde{V}_i$ for $0 \leq i \leq d$.
\end{lemma}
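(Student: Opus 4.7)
The statement is essentially a direct consequence of Lemma \ref{lem:Alowers}(i), so the plan is short. The idea is that when $A$ lowers a decomposition $\{V_i\}_{i=0}^d$, the whole decomposition is reconstructible from its top subspace $V_d$ just by applying powers of $A$. Since both decompositions share the same top, they must agree everywhere.

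More precisely, first I would apply Lemma \ref{lem:Alowers}(i) to each of the decompositions $\{V_i\}_{i=0}^d$ and $\{\tilde{V}_i\}_{i=0}^d$ with $j = d$ and $i$ replaced by $d-i$, obtaining
\begin{align*}
V_i &= A^{d-i} V_d, & \tilde{V}_i &= A^{d-i} \tilde{V}_d
\end{align*}
for $0 \leq i \leq d$. Then the hypothesis $V_d = \tilde{V}_d$ immediately gives $V_i = \tilde{V}_i$ for all $0 \leq i \leq d$.

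There is no real obstacle here; the only thing to verify is that Lemma \ref{lem:Alowers}(i) applies in the form I want, i.e.\ that $A^{d-i}V_d = V_i$, which is exactly its content (with $j=d$). So the proof will consist of a single invocation of Lemma \ref{lem:Alowers}(i) followed by substitution of the equality $V_d = \tilde{V}_d$.
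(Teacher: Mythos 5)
Your proof is correct and is essentially identical to the paper's: both invoke Lemma \ref{lem:Alowers}(i) with $j=d$ to obtain $V_i = A^{d-i}V_d$ and $\tilde{V}_i = A^{d-i}\tilde{V}_d$, and then conclude from the hypothesis $V_d=\tilde{V}_d$.
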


\begin{proof}
By Lemma \ref{lem:Alowers}(i) $V_i = A^{d-i} V_d$ for $0 \leq i \leq d$.
Similarly $\tilde{V}_i = A^{d-i} \tilde{V}_d$ for $0 \leq i \leq d$.
The result follows from these comments and the assumption $V_d = \tilde{V}_d$.
\end{proof}

\begin{lemma}    \label{lem:Braises}    \samepage
\ifDRAFT {\rm lem:Braises}. \fi
Let $B$ be an element in $\text{\rm End}(V)$,
and let $\{V_i\}_{i=0}^d$ be a decomposition of $V$
that is raised by $B$.
Then the following hold.
\begin{itemize}
\item[\rm (i)]
For $0 \leq i,j \leq d$ with $i+j \leq d$,　
$B^i V_j = V_{i+j}$.
Moreover, the restriction of $B^i$ on $V_j$ induces a bijection from $V_j$ to $V_{i+j}$.
\item[\rm (ii)]
$V_d = \text{\rm Ker} \, B$.
\end{itemize}
\end{lemma}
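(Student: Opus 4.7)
The plan is to mirror the proof of Lemma \ref{lem:Alowers} almost verbatim, with the roles of ``lowering'' and ``raising'' exchanged and $V_d$ playing the role that $V_0$ played there. The hypothesis $B V_i = V_{i+1}$ for $0 \leq i \leq d$ (with the convention $V_{d+1}=0$) is the natural dual of $A V_i = V_{i-1}$, so each step has a direct analog.

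For part (i), I would first observe that iterating $B V_j = V_{j+1}$ gives $B^i V_j = V_{i+j}$ as long as $i+j \leq d$, so the first claim is immediate. For the bijectivity claim, I would invoke the fact that $V_j$ and $V_{i+j}$ both have dimension one: the restriction of $B^i$ to $V_j$ is an $\F$-linear map between two one-dimensional $\F$-subspaces whose image is all of $V_{i+j}$, and any surjective linear map between spaces of the same finite dimension is a bijection. No new ideas beyond what was used in Lemma \ref{lem:Alowers}(i) are required.

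For part (ii), the inclusion $V_d \subseteq \text{Ker}\, B$ follows immediately from $B V_d = V_{d+1} = 0$. For the reverse inclusion I would take an arbitrary $v \in \text{Ker}\, B$ and decompose it as $v = \sum_{i=0}^d v_i$ with $v_i \in V_i$. Applying $B$ and using $B v_d \in V_{d+1}=0$ gives
\[
0 \;=\; B v \;=\; \sum_{i=0}^{d-1} B v_i,
\]
with $B v_i \in V_{i+1}$ for $0 \leq i \leq d-1$. Since the sum $V_1 + V_2 + \cdots + V_d$ is direct, each $B v_i = 0$. By the bijectivity established in (i), $B$ restricts to an injection on each $V_i$ with $0 \leq i \leq d-1$, so $v_i = 0$ for $0 \leq i \leq d-1$, and hence $v = v_d \in V_d$.

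I do not anticipate any genuine obstacle here; the only small care needed is to ensure that the bijectivity claim of (i) is already available (or proven first) before being invoked in the proof of (ii), which is why I would organize the argument by proving (i) first and then using it in (ii), exactly as in Lemma \ref{lem:Alowers}.
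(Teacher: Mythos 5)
Your proposal is correct and follows exactly the route the paper intends: the paper's own proof of this lemma is just the one-line remark ``Similar to the proof of Lemma \ref{lem:Alowers},'' and you have faithfully spelled out that analogy with the roles of $V_0$ and $V_d$ (and lowering vs.\ raising) exchanged.
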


\begin{proof}
Similar to the proof of Lemma \ref{lem:Alowers}.
\end{proof}

\begin{lemma}    \label{lem:ABdecomp}    \samepage
\ifDRAFT {\rm lem:ABdecomp}. \fi
Let $A,B$ be an LR pair on $V$.
Then there exists a unique decomposition of $V$
that is lowered by $A$ and raised by $B$.
\end{lemma}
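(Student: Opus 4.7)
The plan is to prove existence and uniqueness separately. Existence is immediate from the definition of an LR pair: the very statement that $A,B$ is an LR pair on $V$ asserts the existence of some decomposition $\{V_i\}_{i=0}^d$ that is lowered by $A$ and raised by $B$. So all the real content is in uniqueness.

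For uniqueness, I would start by letting $\{V_i\}_{i=0}^d$ and $\{\tilde{V}_i\}_{i=0}^d$ be two decompositions of $V$, each of which is lowered by $A$ and raised by $B$. The first step is to pin down the top space. Since both decompositions are raised by $B$, Lemma \ref{lem:Braises}(ii) gives
\begin{equation*}
V_d \;=\; \text{Ker}\, B \;=\; \tilde{V}_d.
\end{equation*}
Thus the two decompositions agree in the top index.

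The second step is to propagate this coincidence down through all indices. Since both decompositions are lowered by $A$ and they agree at the top, Lemma \ref{lem:Vicoincide} applies directly and yields $V_i = \tilde{V}_i$ for all $0 \leq i \leq d$. This completes the proof of uniqueness, and the argument overall.

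There is essentially no obstacle: the lemma is a clean combination of the two preceding results (identifying the top subspace with $\text{Ker}\, B$, then reconstructing the rest of the decomposition by applying powers of $A$). The only thing to be careful about is invoking the correct lemma for each direction—using the raising property of $B$ to fix $V_d$, and the lowering property of $A$ to fix everything else.
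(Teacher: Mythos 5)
Your proof is correct and takes the same approach as the paper: existence is immediate from the definition, and for uniqueness you use Lemma \ref{lem:Braises}(ii) to identify $V_d = \text{Ker}\,B = \tilde{V}_d$ and then Lemma \ref{lem:Vicoincide} to propagate equality to all indices.
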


\begin{proof}
Let $\{V_i\}_{i=0}^d$ and $\{\tilde{V}_i\}_{i=0}^d$ be a decompositions of $V$, each of which
is lowered by $A$ and raised by $B$.
By Lemma \ref{lem:Braises}(ii), $V_d = \text{Ker} \, B = \tilde{V}_d$.
By this and Lemma \ref{lem:Vicoincide} $V_i = \tilde{V}_i$ for $0 \leq i \leq d$.
\end{proof}

\begin{definition}   \label{def:ABdecomp}    \samepage
\ifDRAFT {\rm def:ABdecomp}. \fi
Let $A,B$ be an LR pair on $V$.
By the {\em $(A,B)$-decomposition} we mean the decomposition of $V$
that is lowered by $A$ and raised by $B$.
\end{definition}

\begin{lemma}    {\rm (See \cite[Lemma 3.6]{T:LRT}.)}
\label{lem:BAdecomp}    \samepage
\ifDRAFT {\rm lem:BAdecomp}. \fi
Let $A,B$ be an LR pair on $V$ with $(A,B)$-decomposition $\{V_i\}_{i=0}^d$.
Then $B,A$ is an LR pair with $(B,A)$-decomposition $\{V_{d-i}\}_{i=0}^d$.
\end{lemma}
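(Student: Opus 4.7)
The plan is to exhibit $\{V_{d-i}\}_{i=0}^d$ explicitly as a decomposition of $V$ lowered by $B$ and raised by $A$, then invoke the uniqueness supplied by Lemma \ref{lem:ABdecomp}. Set $\tilde{V}_i = V_{d-i}$ for $0 \leq i \leq d$, together with the boundary conventions $\tilde{V}_{-1}=\tilde{V}_{d+1}=0$. Since $\{V_i\}_{i=0}^d$ is a decomposition of $V$, each $\tilde{V}_i$ is a $1$-dimensional subspace of $V$ and $\sum_{i=0}^d \tilde{V}_i = V$ (direct sum), so $\{\tilde{V}_i\}_{i=0}^d$ is a decomposition of $V$.

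Next I would verify the raising/lowering conditions on $\{\tilde{V}_i\}_{i=0}^d$ by unwinding the re-indexing. Since $B$ raises $\{V_i\}_{i=0}^d$, one has $B V_j = V_{j+1}$ for $0 \leq j \leq d$ (with $V_{d+1}=0$); substituting $j = d-i$ gives
\begin{equation*}
B \tilde{V}_i \;=\; B V_{d-i} \;=\; V_{d-i+1} \;=\; \tilde{V}_{i-1} \qquad (0 \leq i \leq d),
\end{equation*}
with the case $i=0$ producing $B\tilde V_0 = V_{d+1} = 0 = \tilde V_{-1}$. Thus $B$ lowers $\{\tilde{V}_i\}_{i=0}^d$. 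Similarly, since $A$ lowers $\{V_i\}_{i=0}^d$,
\begin{equation*}
A \tilde{V}_i \;=\; A V_{d-i} \;=\; V_{d-i-1} \;=\; \tilde{V}_{i+1} \qquad (0 \leq i \leq d),
\end{equation*}
with $A\tilde V_d = V_{-1} = 0 = \tilde V_{d+1}$, so $A$ raises $\{\tilde{V}_i\}_{i=0}^d$.

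Combining these two facts shows that $B,A$ is an LR pair on $V$ and that $\{\tilde{V}_i\}_{i=0}^d = \{V_{d-i}\}_{i=0}^d$ is a decomposition of $V$ lowered by $B$ and raised by $A$. By Lemma \ref{lem:ABdecomp} the $(B,A)$-decomposition is unique, so it must coincide with $\{V_{d-i}\}_{i=0}^d$, giving the claim.

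There is no real obstacle here; the entire content is a careful re-indexing of the definitions, with the uniqueness statement of Lemma \ref{lem:ABdecomp} doing the final identification. The only point requiring mild care is bookkeeping on the boundary indices $i=0$ and $i=d$ to confirm that the conventions $V_{-1}=V_{d+1}=0$ and $\tilde V_{-1}=\tilde V_{d+1}=0$ are consistent with each other under the substitution $j \leftrightarrow d-i$.
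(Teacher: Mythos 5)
Your argument is correct. Note that the paper does not prove Lemma~\ref{lem:BAdecomp} itself but cites it from~\cite[Lemma 3.6]{T:LRT}, so there is no in-paper proof to compare against; your re-indexing $\tilde V_i = V_{d-i}$, the verification that $B$ lowers and $A$ raises this decomposition, and the appeal to the uniqueness in Lemma~\ref{lem:ABdecomp} to identify it as the $(B,A)$-decomposition is exactly the natural argument.
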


\begin{lemma}    {\rm (See \cite[Lemma 3.12]{T:LRT}.)}
\label{lem:parameterseq}   \samepage
\ifDRAFT {\rm lem:parameterseq}. \fi
Let $A,B$ be an LR pair on $V$ with $(A,B)$-decomposition $\{V_i\}_{i=0}^d$.
Then, for $0 \leq i \leq d$, the $\F$-subspace $V_i$ is invariant under $AB$ and $BA$.
Moreover, for $1 \leq i \leq d$,
the eigenvalue of $AB$ on $V_{i-1}$ is nonzero and equal to the eigenvalue of $BA$
on $V_i$.
\end{lemma}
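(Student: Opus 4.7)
My plan is to handle the two claims in sequence, using only the definition of the $(A,B)$-decomposition together with the bijectivity statements in Lemmas \ref{lem:Alowers} and \ref{lem:Braises}.

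For the invariance claim, I would simply compute directly. Since $B$ raises $\{V_i\}_{i=0}^d$ we have $BV_i \subseteq V_{i+1}$ (with the convention $V_{d+1}=0$), and since $A$ lowers $\{V_i\}_{i=0}^d$ we have $AV_{i+1} \subseteq V_i$ (with the convention $V_{-1}=0$). Hence $ABV_i \subseteq V_i$, and similarly $BAV_i \subseteq V_i$. Because $\dim V_i = 1$, this means $AB$ and $BA$ each act on $V_i$ as multiplication by a scalar. Call these scalars $\varphi_i$ and $\psi_i$, respectively.

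For the second claim, I would fix $i$ with $1\le i\le d$, pick a nonzero $v\in V_i$, and set $w=Av$. By Lemma \ref{lem:Alowers}(i), $A$ restricts to a bijection $V_i \to V_{i-1}$, so $w$ is a nonzero element of $V_{i-1}$. The key identity is
\[
  (AB)(Av) \;=\; A(BAv) \;=\; \psi_i\, Av,
\]
while by definition $(AB)(Av)=\varphi_{i-1}\,Av$. Since $Av\neq 0$, comparing these forces $\varphi_{i-1}=\psi_i$, which is the claimed equality of eigenvalues.

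To see that this common value is nonzero, I would invoke Lemma \ref{lem:Braises}(i): since $i\le d$ and $1+(i-1)=i\le d$, the operator $B$ restricts to a bijection $V_{i-1}\to V_i$. Combined with the bijection $A:V_i\to V_{i-1}$ from Lemma \ref{lem:Alowers}(i), this shows $BA$ acts as a bijection on $V_i$, so its eigenvalue $\psi_i$ is nonzero, and hence so is $\varphi_{i-1}$.

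There is essentially no main obstacle here; the only thing to watch is making sure the conventions $V_{-1}=V_{d+1}=0$ are used correctly and that the applications of Lemmas \ref{lem:Alowers}(i) and \ref{lem:Braises}(i) fall within their stated index ranges (which is why the nonvanishing statement is restricted to $1\le i\le d$).
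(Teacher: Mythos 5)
The paper cites this result from \cite[Lemma 3.12]{T:LRT} without supplying its own proof, so there is no internal argument to compare against; your proof is correct and is the standard one. The invariance is immediate from the lowering/raising property, the eigenvalue equality follows by transporting the identity $A(BA)=(AB)A$ across the bijection $A\colon V_i\to V_{i-1}$ of Lemma \ref{lem:Alowers}(i), and the nonvanishing follows because the restriction of $BA$ to $V_i$ factors as the composition of the bijections $A\colon V_i\to V_{i-1}$ and $B\colon V_{i-1}\to V_i$ (Lemmas \ref{lem:Alowers}(i), \ref{lem:Braises}(i)). One small caution: your internal symbol $\varphi_i$ (eigenvalue of $AB$ on $V_i$) clashes with the paper's $\varphi_i$ from Definition \ref{def:parameterseq}, which is the eigenvalue of $BA$ on $V_i$ (so your $\psi_i$); the argument is unaffected, but the clash could mislead a reader.
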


\begin{definition}     {\rm (See \cite[Definition 3.13]{T:LRT}.)}
\label{def:parameterseq}    \samepage
\ifDRAFT {\rm def:parameterseq}. \fi
Let $A,B$ be an LR pair on $V$.
By the {\em parameter sequence} for $A,B$ we mean the sequence $\{\vphi_i\}_{i=1}^d$,
where $\vphi_i$ denotes the eigenvalue of $BA$ on $V_i$.
We emphasize that $\vphi_i \neq 0$ for $1 \leq i \leq d$.
\end{definition}

\begin{lemma}    {\rm (See \cite[Lemma 3.14]{T:LRT}.)}
\label{lem:BAparam}    \samepage
\ifDRAFT {\rm lem:BAparam}. \fi
Let $A,B$ be an LR pair with parameter sequence $\{\vphi_i\}_{i=1}^d$.
Then the LR pair $B,A$ has  parameter sequence $\{\vphi_{d-i+1}\}_{i=1}^d$.
\end{lemma}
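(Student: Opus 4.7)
The plan is to unwind the definition of parameter sequence as it applies to the pair $B,A$ and then translate everything back to the $(A,B)$-decomposition $\{V_i\}_{i=0}^d$ via Lemma \ref{lem:BAdecomp}. Concretely, let $\{\psi_i\}_{i=1}^d$ denote the parameter sequence of $B,A$. By Definition \ref{def:parameterseq} applied to the pair $B,A$, the scalar $\psi_i$ is the eigenvalue of $AB$ on the $i$-th subspace of the $(B,A)$-decomposition; and by Lemma \ref{lem:BAdecomp} that decomposition is $\{V_{d-i}\}_{i=0}^d$. So $\psi_i$ is the eigenvalue of $AB$ acting on $V_{d-i}$.

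Next I would invoke Lemma \ref{lem:parameterseq}, which asserts that for $1 \leq j \leq d$ the eigenvalue of $AB$ on $V_{j-1}$ coincides with the eigenvalue of $BA$ on $V_j$; the latter is $\vphi_j$ by Definition \ref{def:parameterseq} applied to $A,B$. Setting $j=d-i+1$ (so that $j-1 = d-i$, and $j$ ranges over $1,\ldots,d$ as $i$ does), this yields $\psi_i = \vphi_{d-i+1}$, which is the claim.

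The only step requiring any care is the index bookkeeping: one must check that the reindexing $W_i := V_{d-i}$ really is the decomposition produced by Lemma \ref{lem:BAdecomp}, and that the range of $j = d-i+1$ matches the range $1 \leq j \leq d$ demanded by Lemma \ref{lem:parameterseq}. Both are immediate, so I do not anticipate any genuine obstacle; the lemma is essentially a corollary of Lemma \ref{lem:BAdecomp} and Lemma \ref{lem:parameterseq} together.
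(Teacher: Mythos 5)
Your proof is correct. The paper itself does not prove this lemma but only cites it from Terwilliger's paper; your argument is a clean self-contained derivation using exactly the tools the paper has on hand. In detail: you correctly identify the $(B,A)$-decomposition as $\{V_{d-i}\}_{i=0}^d$ via Lemma \ref{lem:BAdecomp}, so that the parameter sequence $\{\psi_i\}_{i=1}^d$ for $B,A$ is given by the eigenvalue of $AB$ on $V_{d-i}$ (applying Definition \ref{def:parameterseq} with the roles of $A$ and $B$ swapped). Then Lemma \ref{lem:parameterseq} gives that the eigenvalue of $AB$ on $V_{j-1}$ equals $\vphi_j$ for $1\le j\le d$; putting $j=d-i+1$ (so $j-1=d-i$, and $j$ ranges over $1,\dots,d$ exactly as $i$ does) yields $\psi_i=\vphi_{d-i+1}$. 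No gap.
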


\begin{definition}    {\rm (See \cite[Definition 3.21]{T:LRT}.)}
\label{def:ABbasis}    \samepage
\ifDRAFT {\rm def:ABbasis}. \fi
Let $A,B$ be an LR pair on $V$ with $(A,B)$-decomposition $\{V_i\}_{i=0}^d$.
Observe that there exists a basis $\{v_i\}_{i=0}^d$ for $V$ such that
$v_i \in V_i$ and $A v_i = v_{i-1}$ for $0 \leq i \leq d$, 
where $v_{-1}=0$.
We call $\{v_i\}_{i=0}^d$ an {\em $(A,B)$-basis} for $V$.
\end{definition}

\begin{lemma}   {\rm (See \cite[Lemma 3.24]{T:LRT}.)}
  \label{lem:ABbasis}   \samepage
\ifDRAFT {\rm lem:ABbasis}. \fi
Let $A,B$ be an LR pair on $V$ with parameter sequence $\{\vphi_i\}_{i=1}^d$.
Let $\{v_i\}_{i=0}^d$ be an $(A,B)$-basis for $V$.
Then 
\begin{align*}
  B v_i &= \vphi_{i+1} v_{i+1}   &&  (0 \leq i \leq d),
\end{align*}
where $v_{d+1}=0$ and $\vphi_{d+1}$ is an indeterminate.
\end{lemma}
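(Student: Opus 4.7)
The plan is to combine two pieces of structural information that are already in hand: the defining property $A v_{i+1} = v_i$ of an $(A,B)$-basis (Definition \ref{def:ABbasis}), and the fact, due to Lemma \ref{lem:parameterseq} and Definition \ref{def:parameterseq}, that $V_{i+1}$ is a $BA$-invariant one-dimensional subspace on which $BA$ acts as multiplication by $\vphi_{i+1}$. Since $v_{i+1}$ spans $V_{i+1}$, this says exactly that $BA v_{i+1} = \vphi_{i+1} v_{i+1}$.

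For the main range $0 \leq i \leq d-1$ the computation is then a one-liner: starting from $A v_{i+1} = v_i$, apply $B$ to both sides to obtain
\begin{equation*}
  B v_i \;=\; B A v_{i+1} \;=\; \vphi_{i+1} v_{i+1},
\end{equation*}
which is the claimed formula. This is the entire content in the interior.

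For the boundary index $i=d$, the argument $v_{i+1}=v_{d+1}$ does not arise via $A$, so one cannot use the trick above. Instead I would invoke Lemma \ref{lem:Braises}(ii), which gives $V_d = \Ker B$ (equivalently, $B V_d = V_{d+1} = 0$ by the raising convention). Since $v_d \in V_d$, we get $B v_d = 0$, and this agrees with $\vphi_{d+1} v_{d+1}$ under the stated conventions $v_{d+1}=0$ and $\vphi_{d+1}$ indeterminate (the indeterminacy is harmless because it is multiplied by $0$).

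There is no serious obstacle: the only thing that requires any care is the $i=d$ case, where the eigenvalue-of-$BA$ argument is unavailable and must be replaced by the kernel characterization of $V_d$. Everything else is a direct consequence of the definitions already assembled in Section \ref{sec:LRpair}.
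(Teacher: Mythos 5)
Your proof is correct. The paper itself gives no proof of this lemma; it is stated with the citation \cite[Lemma~3.24]{T:LRT}, so there is no in-paper argument to compare against. Your derivation---write $Bv_i = BAv_{i+1}$ for $0 \leq i \leq d-1$ using $Av_{i+1}=v_i$ from Definition~\ref{def:ABbasis}, read off $\vphi_{i+1}$ as the eigenvalue of $BA$ on $V_{i+1}$ from Lemma~\ref{lem:parameterseq} together with Definition~\ref{def:parameterseq}, and settle $i=d$ separately via $V_d=\Ker B$ from Lemma~\ref{lem:Braises}(ii)---is the standard, essentially forced argument, and it is complete.
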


\begin{lemma}    {\rm (See \cite[Lemmas 3.28, 3.30]{T:LRT}.)}
\label{lem:BAbasis}   \samepage
\ifDRAFT {\rm lem:BAbais}. \fi
Let $A,B$ be an LR pair on $V$ with parameter sequence $\{\vphi_i\}_{i=1}^d$.
Let $\{v'_i\}_{i=0}^d$ be an $(B,A)$-basis for $V$.
Then
\begin{align*}
  A v'_i &= \vphi_{d-i} v'_{i+1}    &&  (0 \leq i \leq d),
\\
  B v'_i &= v'_{i-1}                  &&  (0 \leq i \leq d),
\end{align*}
where $v'_{-1}=0$, $v'_{d+1}=0$, and $\vphi_0$ is an indeterminate.
\end{lemma}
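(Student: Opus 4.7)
The plan is to derive Lemma \ref{lem:BAbasis} as a direct consequence of Definition \ref{def:ABbasis} and Lemma \ref{lem:ABbasis}, applied to the LR pair $B,A$ (rather than $A,B$), together with the bookkeeping provided by Lemmas \ref{lem:BAdecomp} and \ref{lem:BAparam}.

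First I would unwind what a $(B,A)$-basis is. By Definition \ref{def:ABbasis}, with the roles of $A$ and $B$ interchanged, a $(B,A)$-basis is a basis $\{v'_i\}_{i=0}^d$ of $V$ whose $i$-th vector lies in the $i$-th subspace of the $(B,A)$-decomposition and which satisfies $B v'_i = v'_{i-1}$ for $0 \leq i \leq d$ (with $v'_{-1}=0$). This is precisely the second asserted equation, so that part of the lemma is immediate from the definition.

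Next I would handle $A v'_i$. The idea is to apply Lemma \ref{lem:ABbasis} to the LR pair $B,A$. By Lemma \ref{lem:BAdecomp}, $B,A$ is indeed an LR pair; and by Lemma \ref{lem:BAparam}, its parameter sequence is $\{\psi_i\}_{i=1}^d$ with $\psi_i = \vphi_{d-i+1}$. Since $\{v'_i\}_{i=0}^d$ is a $(B,A)$-basis, Lemma \ref{lem:ABbasis} (with $A$ replaced by $B$ and $B$ replaced by $A$) gives
\begin{align*}
  A v'_i &= \psi_{i+1} v'_{i+1} && (0 \leq i \leq d),
\end{align*}
where $v'_{d+1}=0$ and $\psi_{d+1}$ is an indeterminate. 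Substituting $\psi_{i+1} = \vphi_{d-(i+1)+1} = \vphi_{d-i}$ yields the first displayed equation, with $\vphi_0$ (coming from $\psi_{d+1}$) being an indeterminate.

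There is no real obstacle here; the entire argument is a translation step. The only thing to be careful about is the indexing: that the indeterminate $\psi_{d+1}$ in the statement of Lemma \ref{lem:ABbasis} applied to $B,A$ corresponds, under $\psi_i = \vphi_{d-i+1}$, to the indeterminate $\vphi_0$ mentioned in the current lemma, and that the boundary conventions $v'_{-1}=0$, $v'_{d+1}=0$ match the ones used in Definition \ref{def:ABbasis} and Lemma \ref{lem:ABbasis}.
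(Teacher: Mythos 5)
Your proof is correct. The paper itself does not prove Lemma \ref{lem:BAbasis} but simply cites \cite[Lemmas 3.28, 3.30]{T:LRT}; the route you take --- reading Definition \ref{def:ABbasis} for the pair $B,A$ to obtain $Bv'_i=v'_{i-1}$, then applying Lemma \ref{lem:ABbasis} to $B,A$ with parameter sequence $\{\vphi_{d-i+1}\}_{i=1}^d$ from Lemma \ref{lem:BAparam} to get $Av'_i=\vphi_{d-i}v'_{i+1}$, with $\psi_{d+1}=\vphi_0$ as the indeterminate --- is the natural translation argument, and your index bookkeeping is accurate.
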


\begin{lemma}    \label{lem:AB}    \samepage
\ifDRAFT {\rm lem:AB}. \fi
Let $A,B$ and $A,\tilde{B}$ be LR pairs on $V$.
Assume that the following hold.
\begin{itemize}
\item[\rm (i)]
The $(A, B)$-decomposition is equal to the $(A, \tilde{B})$-decomposition.
\item[\rm (ii)]
The LR pairs $A,B$ and $A, \tilde{B}$ have the same parameter sequence.
\end{itemize}
Then $B =\tilde{B}$.
\end{lemma}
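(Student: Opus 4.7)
The plan is to exhibit a common basis on which $B$ and $\tilde{B}$ act by the same formula, then conclude by linearity.

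First I would let $\{V_i\}_{i=0}^d$ denote the common decomposition (which is both the $(A,B)$-decomposition and the $(A,\tilde{B})$-decomposition by hypothesis (i)), and fix an $(A,B)$-basis $\{v_i\}_{i=0}^d$ for $V$ as in Definition \ref{def:ABbasis}. The key observation is that the defining conditions for an $(A,B)$-basis, namely $v_i \in V_i$ and $Av_i = v_{i-1}$, involve only $A$ and the decomposition; they do not involve $B$. Since the $(A,\tilde{B})$-decomposition coincides with $\{V_i\}_{i=0}^d$, the very same sequence $\{v_i\}_{i=0}^d$ is simultaneously an $(A,\tilde{B})$-basis for $V$.

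Next I would apply Lemma \ref{lem:ABbasis} twice. Writing $\{\vphi_i\}_{i=1}^d$ for the parameter sequence of $A,B$ and $\{\tilde{\vphi}_i\}_{i=1}^d$ for the parameter sequence of $A,\tilde{B}$, the lemma gives
\begin{align*}
  B v_i &= \vphi_{i+1} v_{i+1}, & \tilde{B} v_i &= \tilde{\vphi}_{i+1} v_{i+1} && (0 \leq i \leq d),
\end{align*}
with $v_{d+1}=0$. By hypothesis (ii) we have $\vphi_i = \tilde{\vphi}_i$ for $1 \leq i \leq d$, so $B v_i = \tilde{B} v_i$ for every $i$. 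Since $\{v_i\}_{i=0}^d$ is a basis of $V$, it follows that $B = \tilde{B}$.

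No step here looks like a real obstacle; the only subtle point is verifying that the notion of $(A,B)$-basis depends only on $A$ and the decomposition (so the same basis serves for both LR pairs), which is immediate from Definition \ref{def:ABbasis}. The role of the parameter sequence hypothesis is simply to align the scalars $\vphi_{i+1}$ and $\tilde{\vphi}_{i+1}$ governing the action of $B$ and $\tilde{B}$ on this common basis.
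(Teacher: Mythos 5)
Your proof is correct and follows essentially the same approach as the paper: fix an $(A,B)$-basis, note via Definition \ref{def:ABbasis} that it is also an $(A,\tilde{B})$-basis because the defining conditions involve only $A$ and the common decomposition, and then apply Lemma \ref{lem:ABbasis} together with hypothesis (ii) to conclude that $B$ and $\tilde{B}$ agree on this basis. You have merely spelled out the intermediate formulas more explicitly than the paper does.
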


\begin{proof}
Let $\{v_i\}_{i=0}^d$ be an $(A,B)$-basis for $V$.
By (i) and Definition \ref{def:ABbasis}, $\{v_i\}_{i=0}^d$ is an $(A,\tilde{B})$-basis for $V$.
Now by (ii) and Lemma \ref{lem:ABbasis}, on the basis $\{v_i\}_{i=0}^d$
the action of $B$ coincides with the action of $\tilde{B}$.
Thus $B = \tilde{B}$.
\end{proof}

\begin{definition}    \label{def:idempotentseq}    \samepage
\ifDRAFT {\rm def:idempotentseq}. \fi
Let $\{V_i\}_{i=0}^d$ be a decomposition of $V$.
For $0 \leq i \leq d$ define $E_i \in \text{End}(V)$ such that
$(E_i - I)V_i = 0$ and $E_i V_j=0$ for $0 \leq j \leq d$, $j \neq i$,
where $I$ denotes the identity.
We call $\{E_i\}_{i=0}^d$ the {\em idempotent sequence} for $\{V_i\}_{i=0}^d$.
\end{definition}

\begin{definition}  {\rm (See \cite[Definition 3.5]{T:LRT}.)}
  \label{def:LRpairidemp}    \samepage
\ifDRAFT {\rm def:LRpairidemp}. \fi
Let $A,B$ be an LR pair on $V$.
By the {\em idempotent sequence} for $A,B$
we mean the idempotent sequence for the $(A,B)$-decomposition.
\end{definition}

\section{Proof of Proposition \ref{prop:LRpair}}
\label{sec:prop}

In this section we prove Proposition \ref{prop:LRpair}.
We begin by recalling the notion of a flag.
By a {\em flag} on $V$ we mean a sequence $\{U_i\}_{i=0}^d$ of $\F$-subspaces of $V$
such that
$\dim U_i = i+1$ $(0 \leq i \leq d)$ and $U_{i-1} \subseteq U_i$ $(1 \leq i \leq d)$.
Let $A$ be an element in $\text{End}(V)$, and 
let $\{U_i\}_{i=0}^d$ be a flag on $V$.
For notational convenience, set $U_{-1}=0$.
We say {\em $A$ lowers $\{U_i\}_{i=0}^d$} whenever 
\begin{align*}
  A U_i &= U_{i-1}   && (0 \leq i \leq d).  
\end{align*}
We say {\em $A$ raises $\{U_i\}_{i=0}^d$} whenever
\begin{align*}
  A U_i &\subseteq U_{i+1},  &
  A U_i &\not\subseteq U_i  &&  (0 \leq i < d). 
\end{align*}
Let $\{V_i\}_{i=0}^d$ be a decomposition of $V$.
We say {\em $\{U_i\}_{i=0}^d$ is induced by $\{V_i\}_{i=0}^d$}
whenever 
\begin{align*}
   U_i &= V_0 + V_1 + \cdots + V_i  &&  (0 \leq i \leq d).  
\end{align*}

\begin{lemma}   \label{lem:flaglower}    \samepage
\ifDRAFT {\rm lem:flaglower}. \fi
Let $\{V_i\}_{i=0}^d$ be a decomposition of $V$,
and let $\{U_i\}_{i=0}^d$ be the flag that is induced by $\{V_i\}_{i=0}^d$.
Let $A$ be an element in $\text{\rm End}(V)$.
Then the following hold.
\begin{itemize}
\item[\rm (i)]
Assume that $A$ lowers $\{V_i\}_{i=0}^d$.  Then $A$ lowers $\{U_i\}_{i=0}^d$.
\item[\rm (ii)]
Assume that $A$ is irreducible tridiagonal on $\{V_i\}_{i=0}^d$. 
Then  $A$ raises $\{U_i\}_{i=0}^d$.
\end{itemize}
\end{lemma}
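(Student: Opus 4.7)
The plan is to handle the two parts separately, using essentially just the definitions and the fact that each $V_i$ is one-dimensional.

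For part (i), I would simply compute $AU_i$ directly. Since $U_i = V_0 + V_1 + \cdots + V_i$ and $A$ lowers $\{V_i\}_{i=0}^d$, we get
\begin{equation*}
  AU_i = AV_0 + AV_1 + \cdots + AV_i = V_{-1} + V_0 + \cdots + V_{i-1} = U_{i-1},
\end{equation*}
with $V_{-1}=0$ and $U_{-1}=0$. This is all that is required.

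For part (ii), I would first establish the containment $AU_i \subseteq U_{i+1}$ by summing the tridiagonal inclusions: for $0 \leq j \leq i$ we have $AV_j \subseteq V_{j-1}+V_j+V_{j+1}$, so $AU_i$ is contained in $V_0 + V_1 + \cdots + V_{i+1} = U_{i+1}$. Then I would verify $AU_i \not\subseteq U_i$ for $0 \leq i < d$. Pick any nonzero $v \in V_i$. Then $Av \in V_{i-1}+V_i+V_{i+1}$, and its component in $V_{i+1}$ is nonzero, since otherwise $AV_i \subseteq V_{i-1}+V_i$, contradicting the irreducible tridiagonal assumption. Because $U_i = V_0 + \cdots + V_i$ has zero component in $V_{i+1}$ under the direct sum decomposition, $Av \not\in U_i$, and hence $AU_i \not\subseteq U_i$.

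There is really no substantive obstacle here; the lemma is a direct unpacking of the definitions of lowering, raising, and irreducible tridiagonal action relative to an induced flag. The only point requiring any care is the final step of (ii), where one must invoke the one-dimensionality of $V_{i+1}$ (equivalently, the directness of the sum $V = \sum_j V_j$) to conclude that a nonzero $V_{i+1}$-component prevents membership in $U_i$.
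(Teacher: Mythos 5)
Your proof is correct and takes essentially the same route as the paper: part (i) is a direct computation from the definition, and part (ii) uses the tridiagonal containment to get $AU_i\subseteq U_{i+1}$ and the irreducibility hypothesis $AV_i\not\subseteq V_{i-1}+V_i$ to get $AU_i\not\subseteq U_i$. The only slight imprecision is in your closing remark: the step $AV_i\subseteq V_{i-1}+V_i$ being forced by a single $v$ with zero $V_{i+1}$-component relies on the one-dimensionality of $V_i$ (so that $v$ spans it), while the conclusion that a nonzero $V_{i+1}$-component excludes membership in $U_i$ is exactly the directness of the decomposition; attributing this to the one-dimensionality of $V_{i+1}$ is a harmless mislabel and does not affect the argument.
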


\begin{proof}
(i): Clear by the construction.

(ii):
We have $A V_i \subseteq V_{i-1} + V_i + V_{i+1}$ for $0 \leq i <d$
since $A$ is tridiagonal on $V$.
So $A U_i \subseteq U_{i+1}$ for $0 \leq i < d$,
We have 
$A V_i \not\subseteq V_{i-1} + V_i$ for $0 \leq i < d$,
since $A$ is irreducible tridiagonal on $\{V_i\}_{i=0}^d$.
By these comments we find that $A U_i \not\subseteq U_i$ for $0 \leq i < d$.
The result follows.
\end{proof}

\begin{lemma}    \label{lem:irred}        \samepage
\ifDRAFT {\rm lem:irred}. \fi
Let $A,B$ be elements in $\text{\rm End}(V)$.
Assume that there exists a flag $\{U_i\}_{i=0}^d$ on $V$ that is lowered by $A$
and raised by $B$.
Then there is no $\F$-subspace $W \subseteq V$ such that
$W \neq 0$, $W \neq V$, $AW \subseteq W$, $BW \subseteq W$.
\end{lemma}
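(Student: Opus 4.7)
The plan is to argue by contradiction: suppose such a $W$ exists, and show that $W$ must contain every $U_i$, forcing $W = U_d = V$.

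First I would use the hypothesis that $A$ lowers the flag to pin down the kernel of $A$ restricted to each $U_i$. Since $AU_i = U_{i-1}$, the rank-nullity theorem gives $\dim(\Ker A \cap U_i) = (i+1) - i = 1$; combined with $U_0 \subseteq \Ker A \cap U_i$ and $\dim U_0 = 1$, this forces $\Ker A \cap U_i = U_0$ for $0 \leq i \leq d$.

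Next I would locate where $W$ first meets the flag. Let $i$ be the smallest index with $W \cap U_i \neq 0$ (such $i$ exists since $W \cap U_d = W \neq 0$), so $W \cap U_{i-1}=0$. Because $A W \subseteq W$ and $AU_i = U_{i-1}$, we have $A(W \cap U_i) \subseteq W \cap U_{i-1} = 0$, so $W \cap U_i \subseteq \Ker A \cap U_i = U_0$. Since $W \cap U_i$ is nonzero and $U_0$ is one-dimensional, this yields $U_0 \subseteq W$.

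I would then propagate up the flag using the hypothesis that $B$ raises $\{U_i\}_{i=0}^d$. Suppose $U_j \subseteq W$ for some $0 \leq j < d$. Then $BU_j \subseteq W$ (since $BW \subseteq W$), while the raising hypothesis says $BU_j \subseteq U_{j+1}$ and $BU_j \not\subseteq U_j$. Since $\dim U_{j+1} = \dim U_j + 1$, this forces $U_j + BU_j = U_{j+1}$, and therefore $U_{j+1} \subseteq W$. By induction $U_d \subseteq W$, i.e.\ $V \subseteq W$, contradicting $W \neq V$.

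There is no real obstacle; the only thing to be a little careful about is the two separate uses of the flag conditions—lowering to seed the induction at $U_0$, and raising to propagate to $U_{i+1}$—together with the dimension counts that make both inclusions into equalities.
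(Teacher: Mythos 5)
Your proof is correct and takes essentially the same approach as the paper: establish $U_0\subseteq W$ using the lowering action of $A$, then show $U_{j}\subseteq W\Rightarrow U_{j+1}\subseteq W$ using the raising action of $B$, concluding $W=V$. The only cosmetic difference is in the base step, where you use rank-nullity and the minimal index $i$ with $W\cap U_i\neq 0$, while the paper picks a nonzero $w\in W$ with $w\in U_s\setminus U_{s-1}$ and applies $A^s$; these are two ways of expressing the same idea.
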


\begin{proof}
Let $W$ be an $\F$-subspace of $V$ such that
$W \neq 0$, $AW \subseteq W$, $B W \subseteq W$.
It suffices to show $W=V$.
To this end, we show
\begin{align}
    U_i &\subseteq W            \label{eq:UiW}
\end{align}
using induction on $i=0,1,\ldots,d$.
We first show \eqref{eq:UiW} for $i=0$.
Pick any nonzero $w \in W$.
There exists an integer $s$ $(0 \leq s \leq d)$ such that
$w \not\in U_{s-1}$ and $w \in U_s$.
Then $A^s w \in W$ since $AW \subseteq W$.
Moreover $0 \neq A^s w \in A^s U_s = U_0$
since $A$ lowers $\{U_i\}_{i=0}^d$.
So $A^s w$ is a nonzero element in $W \cap U_0$.
This forces $U_0 = \F A^s w$ since $U_0$ has dimension $1$,
and so \eqref{eq:UiW} holds for $i=0$.
Now assume $1 \leq i \leq d$. 
By induction $U_{i-1} \subseteq W$.
In this inclusion, apply $B$ to each side and use $B W \subseteq W$ to find
$B U_{i-1} \subseteq W$.
We have $B U_{i-1} \subseteq U_i$ and  $B U_{i-1} \not\subseteq U_{i-1}$ since 
$B$ raises $\{U_i\}_{i=0}^d$.
Pick an element $u \in B U_{i-1}$ such that $u \not\in U_{i-1}$.
Then $u \in W$, $u \in U_i$ and $u \not\in U_{i-1}$.
This forces $U_i = U_{i-1} + \F u$, since  $\dim U_{i} = \dim U_{i-1} + 1$.
 Moreover $U_{i-1} + \F u \subseteq W$.
Thus \eqref{eq:UiW} holds.
The result follows.
\end{proof}

Let $\{U_i\}_{i=0}^d$ and $\{U'_i\}_{i=0}^d$ be flags on $V$.
We say these two flags are {\em opposite} whenever
\begin{align*}
  U_i \cap U'_j &=0  \qquad  \text{ if } \; i+j<d    && (0 \leq i,j \leq d).
\end{align*}

\begin{lemma}   \label{lem:opposite2}    \samepage
\ifDRAFT {\rm lem:opposite2}. \fi
Let $A,B$ be elements in $\text{\rm End}(V)$,
and let $\{U'_i\}_{i=0}^d$ and $\{U''_i\}_{i=0}^d$ be flags on $V$
such that
\begin{itemize}
\item[\rm (i)]
$A$ lowers $\{U'_i\}_{i=0}^d$ and raises $\{U''_i\}_{i=0}^d$;
\item[\rm (ii)]
$B$ raises $\{U'_i\}_{i=0}^d$ and lowers $\{U''_i\}_{i=0}^d$.
\end{itemize}
Then the flags $\{U'_i\}_{i=0}^d$ and $\{U''_i\}_{i=0}^d$ are opposite.
\end{lemma}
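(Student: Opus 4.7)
The plan is to construct an $A,B$-invariant subspace capturing all the "bad" intersections and then apply the irreducibility result of Lemma \ref{lem:irred} to force it to be zero. Concretely, let
\[
   W \;:=\; \sum_{\substack{0 \leq i,j \leq d \\ i+j < d}} U'_i \cap U''_j.
\]
I will show that $W$ is invariant under both $A$ and $B$, that $W \neq V$, and then conclude by Lemma \ref{lem:irred} that $W=0$, which is exactly the opposite condition.

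The first step is a mechanical check: since $A$ lowers $\{U'_i\}_{i=0}^d$ and raises $\{U''_i\}_{i=0}^d$, we have $AU'_i \subseteq U'_{i-1}$ and $AU''_j \subseteq U''_{j+1}$, so
\[
  A(U'_i \cap U''_j) \;\subseteq\; U'_{i-1} \cap U''_{j+1},
\]
with index sum $(i-1)+(j+1) = i+j$. The symmetric computation for $B$ gives $B(U'_i \cap U''_j) \subseteq U'_{i+1} \cap U''_{j-1}$, again with index sum $i+j$. Hence both $A$ and $B$ send each summand of $W$ to another summand of $W$ (treating $U'_{-1}=U''_{-1}=0$ in the usual way), so $AW \subseteq W$ and $BW \subseteq W$.

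Next I observe $W \neq V$: every $(i,j)$ with $i+j < d$ forces $i \leq d-1$, so $U'_i \subseteq U'_{d-1}$, and therefore $W \subseteq U'_{d-1}$, a subspace of dimension $d$. Finally, the flag $\{U'_i\}_{i=0}^d$ is lowered by $A$ and raised by $B$ by hypothesis, so Lemma \ref{lem:irred} applies and tells us that the only $A,B$-invariant subspaces are $0$ and $V$. Since $W \neq V$, we get $W = 0$, which means $U'_i \cap U''_j = 0$ whenever $i+j < d$, as required.

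The construction and invariance check are routine; the only subtlety is verifying the boundary cases (when $i$ or $j$ is $0$ or $d-1$), where one of $U'_{i-1}$, $U''_{j+1}$, $U'_{i+1}$, $U''_{j-1}$ lies outside the range $\{0,\dots,d\}$. Using the conventions $U'_{-1}=U''_{-1}=0$, these edge cases produce the zero summand, which is harmlessly in $W$; no genuine difficulty arises. Thus the main work is simply assembling the invariant subspace and invoking Lemma \ref{lem:irred}.
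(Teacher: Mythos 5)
Your proof is correct and takes essentially the same approach as the paper: both construct an $A,B$-invariant subspace from the intersections $U'_i \cap U''_j$ and then invoke Lemma \ref{lem:irred}. The paper's $W$ is the sum of only the extremal terms $U'_i \cap U''_{d-1-i}$, whereas you sum over all pairs with $i+j<d$, but since $U'_i \cap U''_j \subseteq U'_i \cap U''_{d-1-i}$ for $j \leq d-1-i$, the two subspaces coincide and the arguments are the same.
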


\begin{proof}
For $0 \leq i \leq d-1$ define
\begin{align*}
        W_i &= U'_i \cap U''_{d-i-1}.
\end{align*}
It suffices to show that $W_i = 0$ for $0 \leq i \leq d-1$.
For notational convenience, set $W_{-1}=0$ and $W_d = 0$.
By (i) 
\begin{align}
   A W_i & \subseteq W_{i-1}   &&  (0 \leq i \leq d-1).                \label{eq:AWi}
\end{align}
By (ii)
\begin{align}
   B W_i & \subseteq W_{i+1}   &&  (0 \leq i \leq d-1).                \label{eq:BWi}
\end{align}
Now define
\[
  W = W_0 + W_1 + \cdots + W_{d-1}.
\]
By \eqref{eq:AWi} and \eqref{eq:BWi} we find that 
$A W \subseteq W$ and $B W \subseteq W$.
By these comments and Lemma \ref{lem:irred},
either $W=0$ or $W=V$.
We have $W \neq V$ since $W \subseteq U'_{d-1}$.
So $W=0$, and this forces $W_i=0$ for $0 \leq i \leq d-1$.
\end{proof}

\begin{lemma}    \label{lem:opposite}       \samepage
\ifDRAFT {\rm lem:opposite}. \fi
Let $\{U_i\}_{i=0}^d$ and $\{U'_i\}_{i=0}^d$ be opposite flags on $V$.
For $0 \leq i \leq d$ define 
\begin{align}
  Z_i = U_i \cap U'_{d-i}.                \label{eq:defZi}
\end{align}
Then $\{Z_i\}_{i=0}^d$ is a decomposition of $V$.
\end{lemma}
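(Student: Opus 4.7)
The plan is to combine dimension counting with the opposition hypothesis to show that each $Z_i$ is one-dimensional and that together they span $V$ directly. I will use two standard ingredients: the dimension formula $\dim(U+U')=\dim U+\dim U'-\dim(U\cap U')$, and the convention $U_{-1}=0$ together with $\dim U_i=i+1$ and $\dim U'_j=j+1$ coming from the definition of a flag.

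First I would show $\dim Z_i = 1$ for every $i$ with $0\leq i\leq d$. Since $U_i+U'_{d-i}\subseteq V$ and $\dim V=d+1$, the dimension formula gives
\[
  \dim Z_i = \dim U_i+\dim U'_{d-i}-\dim(U_i+U'_{d-i})
           \geq (i+1)+(d-i+1)-(d+1)=1,
\]
so $\dim Z_i\geq 1$. For the reverse inequality, I would observe
\[
  Z_i\cap U_{i-1}
    = U_i\cap U'_{d-i}\cap U_{i-1}
    = U_{i-1}\cap U'_{d-i}=0,
\]
where the last equality uses that the flags are opposite together with $(i-1)+(d-i)=d-1<d$. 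Since $Z_i\subseteq U_i$ and $Z_i$ meets the codimension-one subspace $U_{i-1}$ of $U_i$ trivially, we have $\dim Z_i\leq 1$, hence $\dim Z_i=1$.

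Next I would prove by induction on $k$ that
\[
  U_k = Z_0\oplus Z_1\oplus\cdots\oplus Z_k \qquad (0\leq k\leq d).
\]
The base case $k=0$ is immediate from $Z_0=U_0\cap U'_d=U_0\cap V=U_0$. For the inductive step, using $U_{k-1}=\bigoplus_{i=0}^{k-1}Z_i$ and $Z_k\subseteq U_k$, the inclusion $\sum_{i=0}^k Z_i\subseteq U_k$ is clear. To see the sum is direct, I would use opposition once more: $Z_k\cap U_{k-1}\subseteq U'_{d-k}\cap U_{k-1}=0$ because $(k-1)+(d-k)=d-1<d$. So $U_{k-1}+Z_k$ is a direct sum of dimension $k+1=\dim U_k$, forcing equality. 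Taking $k=d$ yields $V=U_d=\bigoplus_{i=0}^d Z_i$, which is the required decomposition.

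There is no real obstacle here; the whole argument is driven by the observation that, for $i+j=d-1$, opposition gives $U_i\cap U'_j=0$, which is exactly what is needed both to bound $\dim Z_i$ from above and to make each inductive step of the flag reconstruction a direct sum. The only point that needs a bit of care is making sure the strict inequality $i+j<d$ is invoked correctly, i.e.\ never trying to apply opposition with $i+j=d$.
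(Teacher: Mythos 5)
Your proof is correct and uses the same key mechanism as the paper's own argument, namely that opposition gives $U_{i-1}\cap U'_{d-i}=0$, which is exactly what makes the sum of the $Z_i$ direct. The paper is slightly more economical — it shows $Z_i\neq 0$, proves $(Z_0+\cdots+Z_{i-1})\cap Z_i\subseteq U_{i-1}\cap U'_{d-i}=0$ directly, and then lets dimension counting finish — whereas you additionally establish $\dim Z_i=1$ up front (which the induction would have given you for free) and prove the stronger statement $U_k=\bigoplus_{i=0}^k Z_i$; these are cosmetic differences, not a different route.
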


\begin{proof}
We have $Z_i \neq 0$ for $0 \leq i \leq d$, 
since 
\[
   \dim U_i + \dim U'_{d-i}  > \dim V.
\]
We show that
\begin{equation}
     Z_0 + Z_1 + \cdots + Z_d              \label{eq:sum}
\end{equation}
is a direct sum.
To this end, we show that
\begin{align}
  (Z_0 + Z_1 + \cdots + Z_{i-1}) \cap Z_i = 0    \label{eq:cap}
\end{align}
for $1 \leq i \leq d$. Pick any $i$ $(1 \leq i \leq d)$.
By \eqref{eq:defZi}
\begin{align*}
  Z_0 + Z_1 + \cdots + Z_{i-1} &\subseteq U_{i-1},    &
  Z_i &\subseteq U'_{d-i}.
\end{align*}
So the left-hand side of \eqref{eq:cap} is contained in $U_{i-1} \cap U'_{d-i}$.
We have $U_{i-1} \cap U'_{d-i}=0$ since the two flags are opposite.
By these comments \eqref{eq:cap} holds, and so \eqref{eq:sum} is a direct sum.
The result follows from this and that $Z_i \neq 0$ for $0 \leq i \leq d$. 
\end{proof}

\begin{lemma}    \label{lem:opposite3}    \samepage
\ifDRAFT {\rm lem:opposite3}. \fi
With reference to Lemma \ref{lem:opposite2}, consider the decomposition
$\{Z_i\}_{i=0}^d$ of $V$ referred to in Lemma \ref{lem:opposite}.
Then $\{Z_i\}_{i=0}^d$ is lowered by $A$ and raised by $B$.
\end{lemma}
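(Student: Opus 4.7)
The plan is to verify directly that $AZ_i \subseteq Z_{i-1}$ and $BZ_i \subseteq Z_{i+1}$ for all $i$ (with the convention $Z_{-1}=Z_{d+1}=0$), and then promote these containments to equalities via a dimension count. The key input is that each $Z_i$ is $1$-dimensional, which is built into the fact that $\{Z_i\}_{i=0}^d$ is a decomposition of $V$ by Lemma \ref{lem:opposite}.

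For the containment $AZ_i \subseteq Z_{i-1}$, I would combine the two defining inclusions $Z_i \subseteq U'_i$ and $Z_i \subseteq U''_{d-i}$. Since $A$ lowers $\{U'_i\}_{i=0}^d$ we have $AZ_i \subseteq U'_{i-1}$; since $A$ raises $\{U''_i\}_{i=0}^d$, for $1 \leq i \leq d$ we have $AZ_i \subseteq U''_{d-i+1}$. Intersecting yields $AZ_i \subseteq U'_{i-1} \cap U''_{d-(i-1)} = Z_{i-1}$, and the boundary case $i=0$ is automatic from $AU'_0 = 0$. The containment $BZ_i \subseteq Z_{i+1}$ is obtained by an entirely symmetric argument, now using that $B$ raises $\{U'_i\}_{i=0}^d$ and lowers $\{U''_i\}_{i=0}^d$.

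To upgrade the $A$-containment to equality, I would use $AV = AU'_d = U'_{d-1}$, which has dimension $d$ because $A$ lowers $\{U'_i\}_{i=0}^d$. On the other hand $AV = \sum_{i=0}^d AZ_i = \sum_{i=1}^d AZ_i$, and each summand lies in the $1$-dimensional subspace $Z_{i-1}$. Since $Z_0, Z_1, \ldots, Z_{d-1}$ are linearly independent, the sum has dimension at most $d$, with equality forcing $AZ_i = Z_{i-1}$ for every $1 \leq i \leq d$. The statement for $B$ follows symmetrically from $BV = BU''_d = U''_{d-1}$.

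I do not anticipate any real obstacle: the argument is a mechanical combination of the intersection formula $Z_i = U'_i \cap U''_{d-i}$ with a dimension count. The only care required is bookkeeping to keep straight which of the two flags each of $A$ and $B$ lowers, and which it raises.
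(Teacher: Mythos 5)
Your proof is correct, and the first half (establishing the containments $AZ_i \subseteq Z_{i-1}$ and $BZ_i \subseteq Z_{i+1}$ by intersecting the images under the two flags) matches the paper. Where you diverge is in promoting the containments to equalities. The paper argues by contradiction: if $AZ_r = 0$ for some $r \geq 1$, then $W = Z_r + \cdots + Z_d$ is a nonzero, proper subspace invariant under both $A$ and $B$, contradicting Lemma \ref{lem:irred}. You instead run a dimension count: since $A$ lowers $\{U'_i\}_{i=0}^d$, $AV = U'_{d-1}$ has dimension $d$; since $AV = \sum_{i=1}^d AZ_i$ sits inside the direct sum $Z_0 \oplus \cdots \oplus Z_{d-1}$ of dimension $d$ with each summand $AZ_i$ inside the $1$-dimensional $Z_{i-1}$, equality of dimensions forces every $AZ_i$ to be nonzero, hence equal to $Z_{i-1}$. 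Both arguments are sound. Yours is more self-contained and avoids invoking Lemma \ref{lem:irred} at this step; the paper's choice has the virtue of uniformity, since Lemma \ref{lem:irred} has already been used to establish that the flags are opposite (Lemma \ref{lem:opposite2}) and so is already in hand.
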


\begin{proof}
For notational convenience, set $Z_{-1}=0$ and $Z_{d+1}=0$.
We have
\begin{align}            \label{eq:AZi}
          A Z_i  &\subseteq  Z_{i-1}      &&  (0 \leq i \leq d),
\end{align}
since $A$ lowers $\{U'_i\}_{i=0}^d$ and raises $\{U''_i\}_{i=0}^d$.
We have
\begin{align}            \label{eq:BZi}
          B Z_i  &\subseteq  Z_{i+1}      &&  (0 \leq i \leq d),
\end{align}
since $B$ raises $\{U'_i\}_{i=0}^d$ and lowers $\{U''_i\}_{i=0}^d$.
We claim that $A Z_i \neq 0$ for $1 \leq i \leq d$.
By way of contradiction, assume  $A Z_r = 0$ for some $r$ $(1 \leq r \leq d)$.
Consider the $\F$-subspace
\[
    W = Z_r + Z_{r+1} + \cdots + Z_d.
\]
Note that $W \neq 0$.
We have $A W \subseteq W$ by \eqref{eq:AZi} and since $A Z_r=0$.
We have $B W \subseteq W$ by \eqref{eq:BZi}.
By these comments and  Lemma \ref{lem:irred}, we must have
$W = V$, a contradiction.
We have shown the claim.
By \eqref{eq:AZi} and the claim, $A Z_i = Z_{i-1}$ for $0 \leq i \leq d$,
since $\dim Z_{i-1} = 1$.
In a similar way, we can show that $B Z_i = Z_{i+1}$ for $0 \leq i \leq d$.
Therefore the decomposition $\{Z_i\}_{i=0}^d$ is lowered by $A$ and raised by $B$.
\end{proof}

\begin{proofof}{Proposition \ref{prop:LRpair}}
Let $\{U'_i\}_{i=0}^d$ (resp.\ $\{U''_i\}_{i=0}^d$)
be the flag that is induced by $\{V'_i\}_{i=0}^d$ (resp.\ $\{V''_i\}_{i=0}^d$).
By \eqref{eq:I} and Lemma \ref{lem:flaglower}, 
$A$ lowers $\{U'_i\}_{i=0}^d$ and $B$ raises $\{U'_i\}_{i=0}^d$.
By \eqref{eq:II} and Lemma \ref{lem:flaglower},
$B$ lowers $\{U''_i\}_{i=0}^d$  and $A$ raises $\{U''_i\}_{i=0}^d$.
By these comments and Lemma \ref{lem:opposite2},
the flags $\{U'_i\}_{i=0}^d$ and $\{U''_i\}_{i=0}^d$ are opposite.
By this and Lemmas \ref{lem:opposite}, \ref{lem:opposite3},
there exists a decomposition $\{Z_i\}_{i=0}^d$ that is lowered by $A$
and raised by $B$.
Therefore $A,B$ is an LR pair.
\end{proofof}

\section{Toeplitz matrices}
\label{sec:Toeplitz}

Let $T$ be an upper triangular matrix in $\Mat_{d+1}(\F)$.
Then $T$ is said to be {\em Toeplitz, with parameters $\{\alpha_i\}_{i=0}^d$}
whenever $T$ has $(i,j)$-entry $\alpha_{j-i}$ for $0 \leq i \leq j \leq d$.
In this case
\[
T =
 \begin{pmatrix}
  \alpha_0 & \alpha_1 & \cdot & \cdot & \cdot & \alpha_d \\
              & \alpha_0 & \alpha_1 & \cdot & \cdot & \cdot  \\
              &             & \alpha_0 & \cdot & \cdot & \cdot \\
              &             &             & \cdot & \cdot & \cdot \\
              &             &             &         & \cdot & \alpha_1 \\
    \text{\bf 0}   &     &             &         &  & \alpha_0
 \end{pmatrix}.
\]
Note that $T$ is invertible if and only if $\alpha_0 \neq 0$.
In this case,  $T^{-1}$ is upper triangular and Toeplitz (see \cite[Section 12]{T:LRT}).

\begin{lemma} {\rm (See \cite[Proposition 12.8]{T:LRT}.) }  \label{lem:Toeplitz}        \samepage
\ifDRAFT {\rm lem:Toeplitz}. \fi
Let $\{u_i\}_{i=0}^d$ and $\{v_i\}_{i=0}^d$ be bases for $V$.
Then the following are equivalent:
\begin{itemize}
\item[\rm (i)]
there exists an element $A$ in $\text{\rm End}(V)$ such that
\begin{align*}
 A u_i &= u_{i-1}  \qquad (0 \leq i \leq d), &
 A v_i &= v_{i-1}  \qquad (0 \leq i \leq d),
\end{align*}
where $u_{-1}=0$ and $v_{-1}=0$;
\item[\rm (ii)]
the transition matrix from $\{u_i\}_{i=0}^d$ to $\{v_i\}_{i=0}^d$
is upper triangular and Toeplitz.
\end{itemize}
\end{lemma}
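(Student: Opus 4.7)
The plan is to work entirely in coordinates. Let $T = (T_{ij})_{0\leq i,j\leq d}$ denote the transition matrix from $\{u_i\}_{i=0}^d$ to $\{v_i\}_{i=0}^d$, so that $v_j = \sum_{i=0}^d T_{ij}\, u_i$ for $0 \leq j \leq d$. The lemma is an equivalence, so I will verify both directions by direct computation, using the fact that a linear map is determined by its action on a basis.

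For the direction (ii) $\Rightarrow$ (i), I assume $T$ is upper triangular and Toeplitz with parameters $\{\alpha_k\}_{k=0}^d$, so $v_j = \sum_{i=0}^j \alpha_{j-i}\, u_i$. I would define $A \in \text{End}(V)$ by its action on the basis $\{u_i\}_{i=0}^d$, setting $A u_i = u_{i-1}$ with $u_{-1} = 0$. It then remains to check that $A v_j = v_{j-1}$ for $0 \leq j \leq d$. For $j = 0$ this is immediate since $A u_0 = 0$. For $j \geq 1$, applying $A$ termwise gives
\[
A v_j = \sum_{i=1}^{j} \alpha_{j-i}\, u_{i-1} = \sum_{k=0}^{j-1} \alpha_{(j-1)-k}\, u_k = v_{j-1},
\]
after reindexing $k = i-1$, which is exactly the desired identity.

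For the direction (i) $\Rightarrow$ (ii), I start from $v_j = \sum_i T_{ij}\, u_i$ and apply $A$, using $A u_i = u_{i-1}$ and $A v_j = v_{j-1}$. Equating the two resulting expansions in the basis $\{u_i\}$ yields the recurrence $T_{i+1,\,j} = T_{i,\,j-1}$ for $0 \leq i \leq d-1$ and $1 \leq j \leq d$. This recurrence says that $T_{ij}$ depends only on $j-i$, which is precisely the Toeplitz condition; set $\alpha_k := T_{0,k}$. Finally, upper triangularity follows from the boundary case $v_{-1} = 0$: the identity $A v_0 = 0$ gives $\sum_{i\geq 1} T_{i,0}\, u_{i-1} = 0$, hence $T_{i,0} = 0$ for $i \geq 1$, and combined with the Toeplitz relation this propagates to $T_{ij} = 0$ for all $i > j$.

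There is no real obstacle here; the proof is essentially an index-bookkeeping exercise. The only subtlety to watch is the treatment of the boundary conventions $u_{-1} = v_{-1} = 0$, which is what forces $\alpha_k = 0$ for $k < 0$ and hence genuine upper triangularity rather than just a Toeplitz band structure. I will also remark at the outset that in (ii) $\Rightarrow$ (i) the map $A$ is well-defined because $\{u_i\}_{i=0}^d$ is a basis, so declaring its images determines a unique endomorphism.
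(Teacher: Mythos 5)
Your proof is correct. Note, however, that the paper does not actually prove this lemma: it is stated with a citation to \cite[Proposition~12.8]{T:LRT} and used as a black box, so there is no ``paper's own proof'' to compare against. Your coordinate argument is a clean, self-contained verification. Both directions are handled properly: in (ii)$\Rightarrow$(i) you correctly define $A$ on the basis $\{u_i\}$ and check $Av_j=v_{j-1}$ by reindexing; in (i)$\Rightarrow$(ii) the recurrence $T_{i+1,j}=T_{i,j-1}$ obtained by expanding $Av_j=v_{j-1}$ in the $\{u_i\}$ basis gives the Toeplitz structure, and you correctly observe that the boundary case $Av_0=0$ forces $T_{i,0}=0$ for $i\geq 1$, which then propagates along the subdiagonals to give upper triangularity. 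One could tighten the exposition slightly by noting that the same coefficient comparison also yields $T_{d,j-1}=0$ for $1\leq j\leq d$, but this is redundant once upper triangularity is established. The argument is sound and appropriately careful about the boundary conventions.
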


Let $M$ be a matrix in $\Mat_{d+1}(\F)$.
We denote by  $M^{\sf T}$ the transpose of $M$.
By the {\em anti-diagonal transpose} of $M$ we mean the matrix in $\Mat_{d+1}(\F)$
that has $(i,j)$-entry $M_{d-j,d-i}$ for $0 \leq i,j \leq d$.
Let $Z$ denote the matrix in $\Mat_{d+1}(\F)$ with $(i,j)$-entry $\delta_{i+j,d}$:
\[
 Z =
 \begin{pmatrix}
   \text{\bf 0} & & & & 1 \\
     & & & 1 \\
    & & \cdot \\
    & \cdot \\
   1 & & & & \text{\bf 0}
 \end{pmatrix}
\]
Note that $Z^2=I$, where $I$ denotes the identity matrix in $\Mat_{d+1}(\F)$.

\begin{lemma}    \label{lem:anti0}    \samepage
\ifDRAFT {\rm lem:anti0}. \fi
Let $M$ be a matrix in $\Mat_{d+1}(\F)$.
\begin{itemize}
\item[\rm (i)]
The anti-diagonal transpose of $M$ is equal to $Z M^{\sf T} Z$.
\item[\rm (ii)]
Assume $T$ is upper triangular and Toeplitz.
Then the anti-diagonal transpose of $T$ is equal to $T$.
\end{itemize}
\end{lemma}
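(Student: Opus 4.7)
The plan is to verify both parts by direct entry-wise computation; neither part requires any deep structural argument, so the main job is bookkeeping of indices.

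For part (i), I would compute the $(i,j)$-entry of $Z M^{\sf T} Z$ by expanding the matrix product twice. Since $Z$ has $(i,k)$-entry $\delta_{i+k,d}$, multiplication by $Z$ on the left of a matrix $N$ replaces its row index $i$ by $d-i$, and multiplication by $Z$ on the right replaces its column index $j$ by $d-j$. Applying this to $N = M^{\sf T}$ (whose $(k,l)$-entry is $M_{l,k}$), I would obtain
\begin{align*}
 (Z M^{\sf T} Z)_{i,j} &= (M^{\sf T})_{d-i,\,d-j} = M_{d-j,\,d-i},
\end{align*}
which by definition is the $(i,j)$-entry of the anti-diagonal transpose of $M$.

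For part (ii), with $T$ upper triangular and Toeplitz with parameters $\{\alpha_i\}_{i=0}^d$, I would compute the $(i,j)$-entry of the anti-diagonal transpose as $T_{d-j,\,d-i}$ and split into two cases. If $i \leq j$, then $d-j \leq d-i$, so $T_{d-j,\,d-i} = \alpha_{(d-i)-(d-j)} = \alpha_{j-i}$, matching $T_{i,j}$. If $i > j$, then $d-j > d-i$, so $T_{d-j,\,d-i} = 0$, again matching $T_{i,j}$. Hence the anti-diagonal transpose coincides with $T$ entry-by-entry.

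Neither step presents any real obstacle; the only thing to be careful about is keeping the index substitutions $i \mapsto d-i$ and $j \mapsto d-j$ straight when passing through the transpose. Part (ii) can alternatively be deduced from (i) by checking that $Z T^{\sf T} Z = T$, using that $Z T^{\sf T} Z$ is again upper triangular Toeplitz with the same diagonal constants as $T$, but the direct index calculation above is the cleanest route.
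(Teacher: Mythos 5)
Your proof is correct and matches the paper, which simply declares this a "Routine verification." Your entry-wise index computation is exactly the verification the paper has in mind, and both parts check out.
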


\begin{proof}
Routine verification.
\end{proof}

\begin{lemma}   \label{lem:anti}    \samepage
\ifDRAFT {\rm lem:anti}. \fi
Let $T$ and $T'$ be upper triangular Toeplitz matrices in $\Mat_{d+1}(\F)$.
Let $M$ be a matrix in $\Mat_{d+1}(\F)$, and let $M'$ be the anti-diagonal
transpose of $M$.
Then  the anti-diagonal transpose of $T M T'$ is equal to  $T' M' T$.
\end{lemma}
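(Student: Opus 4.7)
The plan is to reduce everything to the identity $Z^2 = I$ and the two parts of Lemma \ref{lem:anti0}. First I would rewrite the left-hand side using Lemma \ref{lem:anti0}(i): the anti-diagonal transpose of $T M T'$ equals $Z (T M T')^{\sf T} Z$, which by ordinary transpose is $Z\, T'^{\sf T} M^{\sf T} T^{\sf T}\, Z$.

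Next, the key trick is to insert $Z^2 = I$ twice, writing
\[
  Z\, T'^{\sf T} M^{\sf T} T^{\sf T}\, Z
   \;=\; \bigl(Z T'^{\sf T} Z\bigr)\bigl(Z M^{\sf T} Z\bigr)\bigl(Z T^{\sf T} Z\bigr).
\]
Applying Lemma \ref{lem:anti0}(i) in reverse to each factor, the three parenthesized products are exactly the anti-diagonal transposes of $T'$, $M$, and $T$ respectively. By Lemma \ref{lem:anti0}(ii), the anti-diagonal transposes of the upper triangular Toeplitz matrices $T'$ and $T$ equal $T'$ and $T$ themselves, and by definition the anti-diagonal transpose of $M$ is $M'$. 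Hence the product becomes $T' M' T$, as desired.

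There is no real obstacle: once one notices that $Z^2 = I$ lets us distribute the conjugation by $Z$ through the product, everything collapses immediately by Lemma \ref{lem:anti0}. The only care needed is bookkeeping of the order of factors: transposition reverses it from $T M T'$ to $T'^{\sf T} M^{\sf T} T^{\sf T}$, which is why $T$ and $T'$ swap roles in the final expression $T' M' T$.
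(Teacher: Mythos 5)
Your proof is correct and follows essentially the same route as the paper: rewrite the anti-diagonal transpose of $T M T'$ as $Z(TMT')^{\sf T}Z$, expand the transpose, insert $Z^2 = I$ twice, and then invoke Lemma \ref{lem:anti0} to identify the three conjugated factors as $T'$, $M'$, and $T$.
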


\begin{proof}
By Lemma \ref{lem:anti0} 
\begin{align*}
 M' &= Z M^{\sf T} Z,  &  T &= Z {T}^{\sf T} Z, &  T' &= Z {T'}^{\sf T} Z.
\end{align*}
Using these relations and $Z^2 =I$,
we compute the anti-diagonal transpose of $T M T'$ as follows.
\begin{align*}
 Z (T M T')^{\sf T} Z 
 = Z {T'}^{\sf T} M^{\sf T} T^{\sf T} Z  
 = Z {T'}^{\sf T} Z Z M^{\sf T} Z Z {T}^{\sf T} Z 
 = T' M' T.
\end{align*}
The result follows.
\end{proof}

For scalars $\{\vphi_i\}_{i=1}^d$ in $\F$,
let $S_{\vphi_1,\ldots,\vphi_d}$ denote the
the matrix in $\Mat_{d+1}(\F)$ that has subdiagonal entries $\vphi_1,\ldots,\vphi_d$
and all other entries $0$:
\begin{equation*}          
S_{\vphi_1,\ldots,\vphi_d} = 
 \begin{pmatrix}
  0 &  & & & & \text{\bf 0}  \\
  \vphi_1  & 0 &  \\
     & \vphi_2  & \cdot \\
     &   &  \cdot & \cdot \\
    &   &   &   \cdot & \cdot \\
  \text{\bf 0} &  & & & \vphi_d & 0
  \end{pmatrix}.
\end{equation*}

\begin{lemma}   \label{lem:Toeplitz2}        \samepage
\ifDRAFT {\rm lem:Toeplitz2}. \fi
Let $T$ be an invertible upper triangular Toeplitz matrix 
in $\Mat_{d+1}(\F)$.
Let $\{\vphi_i\}_{i=1}^d$ be scalars in $\F$.
Then
the anti-diagonal transpose of $T S_{\vphi_1,\ldots,\vphi_d}  T^{-1}$
is equal to  $T^{-1} S_{\vphi_d,\ldots,\vphi_1}  T$.
\end{lemma}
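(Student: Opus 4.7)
The plan is to apply Lemma \ref{lem:anti} directly, using the hypothesis that $T$ is upper triangular Toeplitz and invertible. For this to work I need two ingredients: first, that $T^{-1}$ is itself upper triangular and Toeplitz, and second, an explicit identification of the anti-diagonal transpose of $S_{\vphi_1,\ldots,\vphi_d}$.

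The first ingredient is already recorded in the paragraph preceding Lemma \ref{lem:Toeplitz}: since $T$ is invertible, its $(0,0)$-entry $\alpha_0$ is nonzero, and $T^{-1}$ is again upper triangular and Toeplitz. Therefore, with the choice $T' = T^{-1}$ and $M = S_{\vphi_1,\ldots,\vphi_d}$, Lemma \ref{lem:anti} applies and gives that the anti-diagonal transpose of $T S_{\vphi_1,\ldots,\vphi_d} T^{-1}$ equals $T^{-1} S' T$, where $S'$ denotes the anti-diagonal transpose of $S_{\vphi_1,\ldots,\vphi_d}$.

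The second ingredient is a direct computation from the definitions. By construction, the $(i,j)$-entry of $S_{\vphi_1,\ldots,\vphi_d}$ equals $\vphi_i$ when $j=i-1$ and is $0$ otherwise. The $(i,j)$-entry of $S'$ is, by definition, the $(d-j,d-i)$-entry of $S_{\vphi_1,\ldots,\vphi_d}$, which is nonzero exactly when $d-i = (d-j)-1$, that is, when $j = i-1$; in that case the entry equals $\vphi_{d-j} = \vphi_{d-i+1}$. Hence the subdiagonal entries of $S'$, read from top to bottom, are $\vphi_d,\vphi_{d-1},\ldots,\vphi_1$, so $S' = S_{\vphi_d,\ldots,\vphi_1}$. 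Substituting this into the identity from the previous paragraph gives the claim.

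There is no real obstacle here; the result is essentially a clean corollary of Lemma \ref{lem:anti}, once the two bookkeeping observations above are in place. The only spot where one must be careful is the index reversal in the second computation — it is easy to confuse $\vphi_{d-i+1}$ with $\vphi_{i}$ or $\vphi_{d-i}$ — but it is a routine verification against the definition of the anti-diagonal transpose.
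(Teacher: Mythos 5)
Your proof is correct and follows essentially the same route as the paper's: invoke Lemma \ref{lem:anti} with $T' = T^{-1}$ (using that $T^{-1}$ is again upper triangular Toeplitz) and identify the anti-diagonal transpose of $S_{\vphi_1,\ldots,\vphi_d}$ as $S_{\vphi_d,\ldots,\vphi_1}$. You merely spell out the index computation that the paper leaves as an observation.
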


\begin{proof}
Observe that the anti-diagonal transpose of $S_{\vphi_1,\ldots, \vphi_d}$ is
$S_{\vphi_d, \ldots, \vphi_1}$.
Recall that $T^{-1}$ is upper triangular and Toeplitz.
Now the result follows from Lemma \ref{lem:anti}.
\end{proof}

\begin{lemma}  \label{lem:C}        \samepage
\ifDRAFT {\rm lem:C}. \fi
Let $A,B,C$ be elements in $\text{\rm End}(V)$.
Let $\{v_i\}_{i=0}^d$ and $\{v'_i\}_{i=0}^d$ be bases for $V$.
For notational convenience, set $v_{-1}=0$, $v_{d+1}=0$, $v'_{-1}=0$, $v'_{d+1}=0$.
Let $\{\vphi_i\}_{i=1}^d$ be scalars in $\F$.
Assume 
\begin{align*}
   A v_i &= v_{i-1}   && (0 \leq i \leq d),
\\
   A v'_i &= v'_{i-1}   && (0 \leq i \leq d),
\\
   B v_i & = \vphi_{i+1} v_{i+1} &&  (0 \leq i \leq d), 
\\
   C v'_i & = \vphi_{d-i} v'_{i+1} && (0 \leq i \leq d).
\end{align*}
Then the matrix representing $C$ with respect to $\{v_i\}_{i=0}^d$ is the
anti-diagonal transpose of the matrix representing $B$ with respect to $\{v'_i\}_{i=0}^d$.
\end{lemma}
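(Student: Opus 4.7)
The plan is to recognize that $B$ and $C$ each act very simply in one of the two bases — namely as the subdiagonal matrices $S_{\vphi_1,\ldots,\vphi_d}$ and $S_{\vphi_d,\ldots,\vphi_1}$, respectively — and that the change-of-basis matrix relating the two bases is forced to be upper triangular and Toeplitz by the way $A$ acts. Then Lemma~\ref{lem:Toeplitz2} delivers the statement on the nose.

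In more detail, first I would invoke Lemma~\ref{lem:Toeplitz} applied to the pair $\{v_i\}_{i=0}^d, \{v'_i\}_{i=0}^d$ and the element $A$: since $A v_i = v_{i-1}$ and $A v'_i = v'_{i-1}$, the transition matrix $T$ between these two bases is upper triangular and Toeplitz, and it is invertible because it is a change of basis. Next, from $B v_i = \vphi_{i+1} v_{i+1}$ I read off that the matrix of $B$ in the basis $\{v_i\}_{i=0}^d$ is exactly $S_{\vphi_1,\ldots,\vphi_d}$; likewise, from $C v'_i = \vphi_{d-i} v'_{i+1}$ I read off that the matrix of $C$ in the basis $\{v'_i\}_{i=0}^d$ is $S_{\vphi_d,\ldots,\vphi_1}$.

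I then translate each of these to the other basis by conjugating by $T^{\pm 1}$. A short bookkeeping computation (using the standard change-of-basis formula) shows that the matrix of $B$ in $\{v'_i\}_{i=0}^d$ has the shape $T^{-1} S_{\vphi_1,\ldots,\vphi_d} T$, while the matrix of $C$ in $\{v_i\}_{i=0}^d$ has the shape $T\, S_{\vphi_d,\ldots,\vphi_1}\, T^{-1}$ (or vice versa, depending on the chosen convention for $T$). Now Lemma~\ref{lem:Toeplitz2}, applied with $T$ replaced by $T^{-1}$ (which is itself upper triangular and Toeplitz), says precisely that the anti-diagonal transpose of $T^{-1} S_{\vphi_1,\ldots,\vphi_d} T$ equals $T\, S_{\vphi_d,\ldots,\vphi_1}\, T^{-1}$. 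This is the desired equality of matrices.

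The main obstacle is essentially bookkeeping: making sure the direction of the transition matrix is chosen consistently so that the conjugation formulas come out in the form demanded by Lemma~\ref{lem:Toeplitz2}. Once that convention is fixed, the proof is a direct concatenation of Lemmas~\ref{lem:Toeplitz} and~\ref{lem:Toeplitz2}, with no real computation beyond writing down the matrix of a ``lowering/raising'' map in its adapted basis.
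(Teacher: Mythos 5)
Your proposal is correct and follows essentially the same route as the paper: use Lemma \ref{lem:Toeplitz} (applied to $A$) to get an upper triangular Toeplitz transition matrix $T$, identify the matrices of $B$ and $C$ in their adapted bases as $S_{\vphi_1,\ldots,\vphi_d}$ and $S_{\vphi_d,\ldots,\vphi_1}$, conjugate by $T^{\pm1}$, and invoke Lemma \ref{lem:Toeplitz2}. The only difference is the direction chosen for $T$ (you apply Lemma \ref{lem:Toeplitz2} with $T^{-1}$ in place of $T$, the paper uses $T$ itself), but you flag this convention issue explicitly and it is harmless since $T^{-1}$ is also upper triangular Toeplitz.
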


\begin{proof}
Let $T$ be the transition matrix from $\{v'_i\}_{i=0}^d$ to $\{v_i\}_{i=0}^d$.
By Lemma \ref{lem:Toeplitz} $T$ is upper triangular and Toeplitz.
For an element $X$ in $\text{End}(V)$, let  $X^\natural$ (resp.\ $X^\flat$) denote
the matrix that represents $X$ with respect to $\{v_i\}_{i=0}^d$
(resp.\ $\{v'_i\}_{i=0}^d$).
By the construction, 
$X^\flat T = T X^\natural$.
Observe that
\begin{align*}
 B^\natural  &= S_{\vphi_1,\ldots,\vphi_d},  &
 C^\flat  &=  S_{\vphi_d,\ldots,\vphi_1}.
\end{align*}
Thus we find 
\begin{align*}
  B^\flat &= T S_{\vphi_1,\ldots,\vphi_d} T^{-1},  &
  C^\natural &= T^{-1} S_{\vphi_d,\ldots,\vphi_1} T. 
\end{align*}
By these comments and Lemma \ref{lem:Toeplitz2},
$C^\natural$ is the anti-diagonal transpose of $B^\flat$.
The result follows.
\end{proof}

\section{Proof of Theorem \ref{thm:main}}
\label{sec:thmmain}

In this section we prove Theorem \ref{thm:main}.
We use the following terms.
A square matrix is said to be {\em tridiagonal} whenever each nonzero entry lies either
on the diagonal, the subdiagonal, or the superdiagonal.
A tridiagonal matrix is said to be {\em irreducible} whenever each subdiagonal entry is nonero
and each superdiagonal entry is nonzero.

\begin{lemma}    \label{lem:trid}   \samepage
\ifDRAFT {\rm lem:trid}. \fi
Let $\{V_i\}_{i=0}^d$ be a decomposition of $V$,
and let $\{v_i\}_{i=0}^d$ be a basis for $V$ such that
$v_i \in V_i$ for $0 \leq i \leq d$.
Let $A$ be an element of $\text{\rm End}(V)$.
Then the following are equivalent:
\begin{itemize}
\item[\rm (i)]
$A$ is irreducible tridiagonal on $\{V_i\}_{i=0}^d$;
\item[\rm (ii)]
with respect to $\{v_i\}_{i=0}^d$,
the matrix representing $A$  is irreducible tridiagonal.
\end{itemize}
\end{lemma}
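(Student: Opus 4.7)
The plan is to translate both conditions into statements about the entries of the matrix of $A$ in the basis $\{v_i\}_{i=0}^d$, using crucially that each $V_i$ is one-dimensional so that $V_i = \F v_i$. Concretely, I would let $M = (M_{ji})_{0 \leq i,j \leq d} \in \Mat_{d+1}(\F)$ denote the matrix representing $A$ with respect to $\{v_i\}_{i=0}^d$, so that
\[
  A v_i = \sum_{j=0}^d M_{ji} v_j \qquad (0 \leq i \leq d).
\]
With the conventions $v_{-1}=0$, $v_{d+1}=0$, $V_{-1}=0$, $V_{d+1}=0$, one has $V_{i-1}+V_i+V_{i+1} = \F v_{i-1}+\F v_i+\F v_{i+1}$, so the inclusion $A V_i \subseteq V_{i-1}+V_i+V_{i+1}$ amounts to $M_{ji}=0$ for every $j$ with $|j-i|>1$. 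Hence $A$ is tridiagonal on $\{V_i\}_{i=0}^d$ if and only if $M$ is tridiagonal.

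Next I would handle irreducibility in the same spirit. Under the tridiagonality already established, the failure $A V_i \not\subseteq V_{i-1}+V_i$ is equivalent to the coefficient of $v_{i+1}$ in $A v_i$ being nonzero, i.e.\ $M_{i+1,i}\neq 0$; letting $i$ run over the values for which this makes sense (i.e.\ $0\leq i\leq d-1$) produces the condition that every subdiagonal entry of $M$ is nonzero. Dually, $A V_i \not\subseteq V_i+V_{i+1}$ is equivalent to $M_{i-1,i}\neq 0$, giving that every superdiagonal entry of $M$ is nonzero. Combining the two gives the equivalence of (i) and (ii).

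There is no real obstacle here beyond bookkeeping: once one notices that passing to the basis $\{v_i\}$ simply reads off the coefficients of $A v_i$ in the three relevant directions, each algebraic condition matches an entrywise matrix condition verbatim. The only thing to be careful about is the boundary indexing (where $v_{-1}$ or $v_{d+1}$ is interpreted as $0$), which is already built into the conventions $V_{-1}=V_{d+1}=0$ fixed at the start of Section~\ref{sec:LRpair}.
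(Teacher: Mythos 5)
Your argument is correct and is exactly what the paper's ``Routine verification'' is implicitly invoking: reading off the entries $M_{ji}$ of $A$ in the basis $\{v_i\}$ and using $V_i = \F v_i$ to translate each inclusion/non-inclusion condition into an entrywise condition on the tridiagonal of $M$. The only point worth being explicit about (which you do address) is that the non-inclusion conditions $A V_i \not\subseteq V_{i-1}+V_i$ and $A V_i \not\subseteq V_i+V_{i+1}$ are only meaningful, respectively, for $0 \leq i \leq d-1$ and $1 \leq i \leq d$, matching the range of sub- and superdiagonal entries of a $(d+1)\times(d+1)$ matrix.
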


\begin{proof}
Routine verification.
\end{proof}

\begin{proposition}   \label{prop:const}    \samepage
\ifDRAFT {\rm prop:const}. \fi
Let $A,B$ be an LR  pair on $V$ with parameter sequence $\{\vphi_i\}_{i=1}^d$.
Let $\{V'_i\}_{i=0}^d$ be a decomposition of $V$ that satisfies \eqref{eq:I},
and let  $\{v'_i\}_{i=0}^d$ be a basis for $V$ such that $v'_i \in V'_i$ $(0 \leq i \leq d)$ and
\begin{align}
  A v'_i &= v'_{i-1}  &&  (0 \leq i \leq d),       \label{eq:Avi2}
\end{align}
where $v'_{-1}=0$.
Define $C \in \text{\rm End}(V)$ such that
\begin{align}
  C v'_i &= \vphi_{d-i} v'_{i+1}  &&  (0 \leq i \leq d),    \label{eq:Cvi2}
\end{align}
where $v'_{d+1}=0$ and $\vphi_0$ is an indeterminate.
Then $A,B,C$ is an LR triple and $\{V'_i\}_{i=0}^d$ is the
$(A,C)$-decomposition.
Moreover, 
the parameter sequence for the LR pair $C,A$ is equal to the
parameter sequence for $A,B$.
\end{proposition}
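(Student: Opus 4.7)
The plan is to establish four things in sequence: (a) that $A,C$ is an LR pair whose $(A,C)$-decomposition is $\{V'_i\}_{i=0}^d$; (b) that $B,C$ is an LR pair; (c) hence that $A,B,C$ is an LR triple; and (d) that the parameter sequence for $C,A$ equals $\{\vphi_i\}_{i=1}^d$.

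Parts (a) and (d) are nearly automatic. By \eqref{eq:I} the decomposition $\{V'_i\}_{i=0}^d$ is lowered by $A$. The defining equation \eqref{eq:Cvi2} gives $CV'_i = V'_{i+1}$ for $0 \leq i \leq d-1$ and $CV'_d=0$, because the scalars $\vphi_{d-i}$ are nonzero for $0 \leq i \leq d-1$ by Definition \ref{def:parameterseq}; so $C$ raises $\{V'_i\}_{i=0}^d$, and $A,C$ is an LR pair with $(A,C)$-decomposition $\{V'_i\}_{i=0}^d$. Since $v'_i \in V'_i$ and $Av'_i = v'_{i-1}$, Definition \ref{def:ABbasis} says $\{v'_i\}_{i=0}^d$ is an $(A,C)$-basis; comparing \eqref{eq:Cvi2} with Lemma \ref{lem:ABbasis} shows that the parameter sequence for $A,C$ is $\{\vphi_{d-i+1}\}_{i=1}^d$, and Lemma \ref{lem:BAparam} converts this to $\{\vphi_i\}_{i=1}^d$ for $C,A$, giving (d).

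The main work is (b). I plan to apply Proposition \ref{prop:LRpair} with $B,C$ in the roles of $A,B$, so I need one decomposition that $B$ lowers and on which $C$ is irreducible tridiagonal, and a second that $C$ lowers and on which $B$ is irreducible tridiagonal. For the second I take $\{V'_{d-i}\}_{i=0}^d$: $C$ lowers it, being the reverse of a decomposition that $C$ raises, and $B$ is irreducible tridiagonal on it because tridiagonality and its irreducibility are preserved under reversing the indexing. For the first I take the analogous reversal $\{V_{d-i}\}_{i=0}^d$ of the $(A,B)$-decomposition, which $B$ lowers for the same reason.

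The main obstacle is verifying that $C$ is irreducible tridiagonal on $\{V_{d-i}\}_{i=0}^d$ (equivalently on $\{V_i\}_{i=0}^d$). For this I pass to an $(A,B)$-basis $\{v_i\}_{i=0}^d$. By Lemma \ref{lem:ABbasis} the actions of $A,B$ on $\{v_i\}_{i=0}^d$ and of $A,C$ on $\{v'_i\}_{i=0}^d$ together satisfy the hypotheses of Lemma \ref{lem:C}, which identifies the matrix of $C$ on $\{v_i\}_{i=0}^d$ as the anti-diagonal transpose of the matrix of $B$ on $\{v'_i\}_{i=0}^d$. The latter is irreducible tridiagonal by Lemma \ref{lem:trid} applied to the hypothesis in \eqref{eq:I}, and one checks that the anti-diagonal transpose preserves the irreducible tridiagonal shape, since it simply permutes the super- and subdiagonal entries among themselves. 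A second use of Lemma \ref{lem:trid}, now in the basis $\{v_i\}_{i=0}^d$, produces the required conclusion. Proposition \ref{prop:LRpair} then yields the LR pair $B,C$, and combining this with (a) and the given LR pair $A,B$ gives the LR triple in (c).
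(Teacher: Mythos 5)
Your proposal is correct and follows essentially the same route as the paper: it uses Lemma \ref{lem:C} to identify the matrix of $C$ in an $(A,B)$-basis as the anti-diagonal transpose of the matrix of $B$ in $\{v'_i\}_{i=0}^d$, deduces irreducible tridiagonality via Lemma \ref{lem:trid}, and then invokes Proposition \ref{prop:LRpair} on the reversed decompositions $\{V_{d-i}\}_{i=0}^d$ and $\{V'_{d-i}\}_{i=0}^d$ to get the LR pair $B,C$. Your treatment of the parameter sequence for $C,A$ via Lemmas \ref{lem:ABbasis} and \ref{lem:BAparam} is a slightly more explicit version of the paper's one-line observation, but the content is the same.
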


\begin{proof}
Let $\{V_i\}_{i=0}^d$ be the $(A,B)$-decomposition, and let
$\{v_i\}_{i=0}^d$ be the $(A,B)$-basis for $V$ such that $v_0 = v'_0$.
By Definition \ref{def:ABbasis},
\begin{align*}
  A v_i &= v_{i-1}  &&  (0 \leq i \leq d), 
\end{align*}
where $v_{-1} = 0$.
By Lemma \ref{lem:ABbasis}
\begin{align*}
  B v_i &= \vphi_{i+1} v_{i+1}  && (0 \leq i \leq d), 
\end{align*}
where $v_{d+1}=0$ and $\vphi_{d+1}$ is an indeterminate.
Let $B^\flat$ denote the matrix that represents $B$ with respect to $\{v'_i\}_{i=0}^d$,
and $C^\natural$ denote the matrix that represents $C$ with respect to $\{v_i\}_{i=0}^d$.
By Lemma \ref{lem:C} $C^\natural$ is the anti-diagonal transpose of $B^\flat$.
Observe that $B^\flat$ is irreducible tridiagonal by Lemma \ref{lem:trid} and
since $B$ is irreducible tridiagonal on $\{V'_i\}_{i=0}^d$.
By these comments, $C^\natural$ is irreducible tridiagonal.
By this and Lemma \ref{lem:trid} $C$ is irreducible tridiagonal on $\{V_i\}_{i=0}^d$.
By the construction, $B$ raises $\{V_i\}_{i=0}^d$, so $B$ lowers $\{V_{d-i}\}_{i=0}^d$.
By \eqref{eq:Cvi2} $C$ raises $\{V'_i\}_{i=0}^d$, so $C$ lowers $\{V'_{d-i}\}_{i=0}^d$.
By these comments,
\begin{align*}
& \text{$B$ lowers $\{V_{d-i}\}_{i=0}^d$ and $C$ is irreducible tridiagonal on $\{V_{d-i}\}_{i=0}^d$}, \\
& \text{$C$ lowers $\{V'_{d-i}\}_{i=0}^d$ and $B$ is irreducible tridiagonal on $\{V'_{d-i}\}_{i=0}^d$}.
\end{align*}
Therefore $B,C$ is an LR by  Proposition \ref{prop:LRpair}.
By the assumption, $A,B$ is an LR pair.
By \eqref{eq:Avi2} and \eqref{eq:Cvi2}, $A,C$ is an LR pair with
$(A,C)$-decomposition $\{V'_{i}\}_{i=0}^d$.
Thus $A,B,C$ is an LR triple.
By \eqref{eq:Cvi2} the parameter sequence for $C,A$ is $\{\vphi_i\}_{i=1}^d$.
\end{proof}

\begin{proofof}{Theorem \ref{thm:main}}
(i)$\Rightarrow$(ii): Follows from Proposition \ref{prop:const}.

(ii)$\Rightarrow$(i): Follows from Lemma \ref{lem:based}.
\end{proofof}

\section{Proof of Theorem \ref{thm:main1}}
\label{sec:thmmain1}

In this section we prove Theorem \ref{thm:main1}.

\begin{lemma}    \label{lem:coincide}    \samepage
\ifDRAFT {\rm lem:coincide}. \fi
Let $A,B,C$ be an LR triple on $V$, and
let $\{V'_i\}_{i=0}^d$ be the $(A,C)$-decomposition.
Let $\{V''_i\}_{i=0}^d$ be a decomposition of $V$ that satisfy \eqref{eq:II}.
Then the following are equivalent:
\begin{itemize}
\item[\rm (i)]
$V'_d = V''_d$;
\item[\rm (ii)]
$\{V''_i\}_{i=0}^d$ is the $(B,C)$-decomposition.
\end{itemize}
\end{lemma}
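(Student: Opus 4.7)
The plan is to exploit two general facts about decompositions: first, the top subspace of any decomposition raised by a given operator equals the kernel of that operator (Lemma \ref{lem:Braises}(ii)); and second, a decomposition lowered by a fixed operator is determined by its top subspace (Lemma \ref{lem:Vicoincide}).

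For the direction (ii)$\Rightarrow$(i), I would note that since $\{V'_i\}_{i=0}^d$ is the $(A,C)$-decomposition, $C$ raises it, so Lemma \ref{lem:Braises}(ii) gives $V'_d = \text{Ker}\, C$. If in addition $\{V''_i\}_{i=0}^d$ is the $(B,C)$-decomposition, then $C$ also raises $\{V''_i\}_{i=0}^d$, so by the same lemma $V''_d = \text{Ker}\, C$. Thus $V'_d = V''_d$.

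For the main direction (i)$\Rightarrow$(ii), let $\{\hat{V}_i\}_{i=0}^d$ denote the $(B,C)$-decomposition, which exists since $A,B,C$ is an LR triple. Applying Lemma \ref{lem:Braises}(ii) to $C$ with respect to both $\{V'_i\}_{i=0}^d$ and $\{\hat{V}_i\}_{i=0}^d$ (each of which $C$ raises), I get $V'_d = \text{Ker}\, C = \hat{V}_d$. Combined with the hypothesis $V'_d = V''_d$ this gives $V''_d = \hat{V}_d$. Now both $\{V''_i\}_{i=0}^d$ (by hypothesis \eqref{eq:II}) and $\{\hat{V}_i\}_{i=0}^d$ (by definition of $(B,C)$-decomposition) are lowered by $B$, so Lemma \ref{lem:Vicoincide} applied to $B$ forces $V''_i = \hat{V}_i$ for all $0 \leq i \leq d$. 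Hence $\{V''_i\}_{i=0}^d$ is the $(B,C)$-decomposition.

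There is no real obstacle here: the argument is a bookkeeping exercise once one observes that both decompositions in question have the same top subspace (namely $\text{Ker}\, C$) and are both lowered by $B$. The only subtlety is remembering that Lemma \ref{lem:Braises}(ii) applies to any operator that raises a decomposition, not just the one named $B$ in that lemma; the same for the lowering statement in Lemma \ref{lem:Vicoincide}.
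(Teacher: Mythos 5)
Your proof is correct and follows essentially the same route as the paper: both directions rest on Lemma \ref{lem:Braises}(ii) to identify the top subspace of any decomposition raised by $C$ with $\text{Ker}\,C$, and on Lemma \ref{lem:Vicoincide} to conclude that two decompositions lowered by $B$ with the same top component must agree. If anything your write-up is slightly more explicit than the paper's, which applies Lemma \ref{lem:Vicoincide} in the (i)$\Rightarrow$(ii) direction without spelling out why $V''_d$ equals the top component of the $(B,C)$-decomposition; you supply that step by routing through $\text{Ker}\,C$.
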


\begin{proof}
(i)$\Rightarrow$(ii):
Let $\{\tilde{V}''_i\}_{i=0}^d$ be the $(B,C)$-decomposition.
We have $V''_i = \tilde{V}''_i$ for $0 \leq i \leq d$ by Lemma \ref{lem:Vicoincide}
and since each of $\{V''_i\}_{i=0}^d$, $\{\tilde{V}''_i\}_{i=0}^d$ is lowered by $B$.
The result follows.

(ii)$\Rightarrow$(i):
By Lemma \ref{lem:Braises}(ii), each of $V'_d$ and  $V''_d$ is the kernel of $C$.
\end{proof}

\begin{proofof}{Theorem \ref{thm:main1}}
(i)$\Rightarrow$(ii):
By Proposition \ref{prop:LRpair} $A,B$ is an LR pair.
By Theorem \ref{thm:main}(i)$\Rightarrow$(ii)
there exists an element $C$ in $\text{End}(V)$ such that
$A,B,C$ is an LR triple 
and $\{V'_i\}_{i=0}^d$ is the $(A,C)$-decomposition.
By this and Lemma \ref{lem:coincide}, $\{V''_i\}_{i=0}^d$
is the $(B,C)$-decomposition.
Thus (a)--(c) holds in Theorem \ref{thm:main1}(ii).

(ii)$\Rightarrow$(i):
By Lemma \ref{lem:based} the conditions \eqref{eq:I}, \eqref{eq:II} are satisfied.
By Lemma \ref{lem:coincide}  $V'_d=V''_d$.
\end{proofof}

\section{Bipartite LR triples}
\label{sec:bipartite}

In this section we recall the notion of a bipartite LR triple.
Let $A$ be an element in $\text{End}(V)$ and let $\{V_i\}_{i=0}^d$ be 
a decomposition of $V$.
We say $A$ is {\em zero-diagonal} on $\{V_i\}_{i=0}^d$ whenever
\begin{align*}
  A V_i &\subseteq V_0 + \cdots + V_{i-1} + V_{i+1} + \cdots + V_d   && (0 \leq i \leq d).
\end{align*}
Let $A,B,C$ be an LR triple on $V$.

\begin{definition}    \label{def:bipartite}    \samepage
\ifDRAFT {\rm def:biparite}. \fi
We say $A,B,C$ is {\em bipartite} whenever $A$ (resp.\ $B$) (resp.\ $C$)
is zero-diagonal on the $(B,C)$-decomposition
(resp.\ $(C,A)$-decomposition)  (resp.\ $(A,B)$-decomposition).
\end{definition}

\begin{lemma} {\rm  (see \cite[Example 13.3, Lemma 16.2]{T:LRT}.)}
\label{lem:d0}  \samepage
\ifDRAFT {\rm lem:d0}. \fi
Assume $d=0$.
Then each of $A,B,C$ is zero, and the LR triple $A,B,C$ is bipartite.
\end{lemma}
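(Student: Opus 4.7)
The plan is to unwind the definitions when $d=0$ and observe that everything collapses to vacuous or trivial statements. Since $\dim V = d+1 = 1$, the only possible decomposition of $V$ consists of the single subspace $V_0 = V$.

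First I would handle the claim that $A,B,C$ are all zero. Given the LR pair $A,B$, Definition of an LR pair provides a decomposition $\{V_i\}_{i=0}^d$ of $V$ with $A V_i = V_{i-1}$ and $B V_i = V_{i+1}$ for $0 \leq i \leq d$. With $d=0$, this reduces to $A V_0 = V_{-1} = 0$ and $B V_0 = V_1 = 0$. Since $V_0 = V$, both $A$ and $B$ annihilate $V$, so $A = B = 0$. Applying the same argument to the LR pair $A,C$ (or $B,C$) gives $C = 0$ as well.

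Next I would verify bipartiteness. By Definition \ref{def:bipartite}, I need $A$ to be zero-diagonal on the $(B,C)$-decomposition, and similarly for $B$ and $C$. Unwinding the zero-diagonal condition at $d=0$, it reads
\[
 A V_0 \subseteq V_0 + \cdots + V_{-1} + V_1 + \cdots + V_0,
\]
where both index ranges on the right are empty, so the right-hand side is the zero subspace. Since $A = 0$ (shown above), the inclusion $A V_0 \subseteq 0$ holds trivially. The same reasoning applies to $B$ on the $(C,A)$-decomposition and to $C$ on the $(A,B)$-decomposition. Hence $A,B,C$ is bipartite.

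There is no real obstacle here; the whole argument is a careful reading of the conventions $V_{-1} = V_{d+1} = 0$ and of the empty sums that appear in the zero-diagonal condition when $d=0$.
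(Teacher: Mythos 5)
Your argument is correct and complete. The paper itself gives no proof of this lemma, citing it directly from Terwilliger's paper \cite{T:LRT}, so there is no internal proof to compare against; your elementary unwinding of the definitions—noting that $\dim V = 1$ forces the unique decomposition $V_0 = V$, that the LR-pair conditions $AV_0 = V_{-1} = 0$ and $BV_0 = V_1 = 0$ (and likewise for $C$) force all three maps to vanish, and that the zero-diagonal condition at $d=0$ has an empty right-hand side and so holds vacuously once $A=B=C=0$—is exactly the kind of direct verification that establishes the statement.
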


\begin{lemma}  {\rm (See \cite[Lemma 16.6]{T:LRT}.)}  
\label{lem:deven}   \samepage
\ifDRAFT {\rm lem:deven}. \fi
Assume $A,B,C$ is bipartite.
Then $d$ is even.
\end{lemma}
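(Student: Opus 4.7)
The plan is to argue by contradiction, assuming $d$ is odd. The key idea is that the bipartite hypothesis forces the matrix of $A$ on the $(B,C)$-decomposition to have a block-antidiagonal structure with invertible off-diagonal blocks, which contradicts the nilpotence of $A$.

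First I would assemble the matrix form of $A$ on $\{V''_i\}_{i=0}^d$. By Definition \ref{def:bipartite} and the bipartite hypothesis, $A$ is zero-diagonal on $\{V''_i\}_{i=0}^d$. By Lemma \ref{lem:based}(ii), $A$ is irreducible tridiagonal on $\{V''_i\}_{i=0}^d$. Choosing any basis $\{v''_i\}_{i=0}^d$ with $0 \neq v''_i \in V''_i$, we obtain
\[
  A v''_j = a_j v''_{j-1} + b_j v''_{j+1} \qquad (0 \leq j \leq d),
\]
where $v''_{-1}=v''_{d+1}=0$, and $a_j \neq 0$ for $1 \leq j \leq d$, $b_j \neq 0$ for $0 \leq j \leq d-1$.

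Next I would exploit the bipartite parity structure. Define $V_{\mathrm{e}} = \sum_{i\,\text{even}} V''_i$ and $V_{\mathrm{o}} = \sum_{i\,\text{odd}} V''_i$, so $V = V_{\mathrm{e}} \oplus V_{\mathrm{o}}$. The zero-diagonal tridiagonal action shows $A(V_{\mathrm{e}}) \subseteq V_{\mathrm{o}}$ and $A(V_{\mathrm{o}}) \subseteq V_{\mathrm{e}}$, so $A$ restricts to $A_1\colon V_{\mathrm{e}} \to V_{\mathrm{o}}$ and $A_2\colon V_{\mathrm{o}} \to V_{\mathrm{e}}$. Under the assumption that $d$ is odd, $d+1$ is even and $\dim V_{\mathrm{e}} = \dim V_{\mathrm{o}} = (d+1)/2$, so $A_1$ and $A_2$ are square. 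Ordering the bases as $(v''_0, v''_2, \dots, v''_{d-1})$ for $V_{\mathrm{e}}$ and $(v''_1, v''_3, \dots, v''_d)$ for $V_{\mathrm{o}}$, a direct inspection shows $A_1$ is upper bidiagonal with diagonal entries $b_0, b_2, \dots, b_{d-1}$, and $A_2$ is lower bidiagonal with diagonal entries $a_1, a_3, \dots, a_d$. Since the diagonal entries of each are nonzero, $A_1$ and $A_2$ are both invertible.

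Finally I would derive the contradiction. Given $v \in \mathrm{Ker}\, A$, write $v = v_{\mathrm{e}} + v_{\mathrm{o}}$ with $v_{\mathrm{e}} \in V_{\mathrm{e}}$, $v_{\mathrm{o}} \in V_{\mathrm{o}}$. Then $A_1 v_{\mathrm{e}} + A_2 v_{\mathrm{o}} = 0$ with $A_1 v_{\mathrm{e}} \in V_{\mathrm{o}}$ and $A_2 v_{\mathrm{o}} \in V_{\mathrm{e}}$, so by the direct-sum decomposition $A_1 v_{\mathrm{e}} = 0$ and $A_2 v_{\mathrm{o}} = 0$, and the invertibility of $A_1, A_2$ forces $v = 0$. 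Hence $\mathrm{Ker}\, A = 0$. On the other hand, $A$ lowers the $(A,B)$-decomposition $\{V_i\}_{i=0}^d$, so by Lemma \ref{lem:Alowers}(ii), $\mathrm{Ker}\, A = V_0$, which is $1$-dimensional and in particular nonzero. This contradiction shows $d$ must be even. The main (non-routine) step is recognising that under the parity block decomposition the off-diagonal blocks become triangular with nonzero diagonal; everything else is immediate bookkeeping.
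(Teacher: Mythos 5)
Your proof is correct. The paper itself states Lemma \ref{lem:deven} as a citation to \cite[Lemma 16.6]{T:LRT} and gives no internal proof, so there is nothing to compare it against line by line; what you have produced is a valid self-contained argument from the definitions and lemmas recorded in this paper. The key observations all check out: by the bipartite hypothesis $A$ is zero-diagonal on the $(B,C)$-decomposition $\{V''_i\}_{i=0}^d$, and by Lemma \ref{lem:based}(ii) it is irreducible tridiagonal there, so $A v''_j = a_j v''_{j-1} + b_j v''_{j+1}$ with $a_j \neq 0$ for $1 \leq j \leq d$ and $b_j \neq 0$ for $0 \leq j \leq d-1$. Under the assumption that $d$ is odd, the even and odd parity subspaces each have dimension $(d+1)/2$, the two off-diagonal blocks $A_1, A_2$ are square, and in the orderings you specify they are triangular with diagonal entries $b_0, b_2, \ldots, b_{d-1}$ and $a_1, a_3, \ldots, a_d$, all nonzero, hence invertible. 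The resulting conclusion $\Ker A = 0$ contradicts $\Ker A = V_0 \neq 0$ from Lemma \ref{lem:Alowers}(ii) applied to the $(A,B)$-decomposition. This is a clean and elementary route to the statement.
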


\begin{definition}       \label{def:idempotentdata}    \samepage
\ifDRAFT {\rm def:idempotentdata}. \fi
By the {\em idempotent data} for $A,B,C$ we mean the sequence
\begin{equation}              \label{eq:idempotent}
    (\{E_i\}_{i=0}^d, \{E'_i\}_{i=0}^d, \{E''_i\}_{i=0}^d),
\end{equation}
where $\{E_i\}_{i=0}^d$  (resp.\ $\{E'_i\}_{i=0}^d$) (resp.\ $\{E''_i\}_{i=0}^d$)
denotes the idempotent sequence for the
$(A,B)$-decomposition (resp.\ $(B,C)$-decomposition) (resp.\ $(C,A)$-decomposition).
\end{definition}

\begin{lemma}  {\rm (See \cite[Lemma 16.12]{T:LRT}.)}
\label{lem:Vout}    \samepage
\ifDRAFT {\rm lem:Vout}. \fi
Assume $A,B,C$ is bipartite.
Let \eqref{eq:idempotent} be the idempotent data for $A,B,C$.
Then the following $\F$-subspaces are equal:
\begin{align}                    \label{eq:Vout}
 & \sum_{j=0}^{d/2} E_{2j} V,   && \sum_{j=0}^{d/2} E'_{2j} V, && \sum_{j=0}^{d/2} E''_{2j} V,  
\end{align}
and the following $\F$-subspaces are equal:
\begin{align}                    \label{eq:Vin}
& \sum_{j=1}^{d/2} E_{2j-1} V,  &&  \sum_{j=1}^{d/2} E'_{2j-1} V,  &&  \sum_{j=1}^{d/2} E''_{2j-1} V.
\end{align}
\end{lemma}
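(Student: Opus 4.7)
The strategy is to prove the single equality $\sum_{j=0}^{d/2} E_{2j} V = \sum_{j=0}^{d/2} E'_{2j} V$ in \eqref{eq:Vout}; the remaining equality in \eqref{eq:Vout} then follows by applying the same argument to the cyclic rotation $B,C,A$ (which is again a bipartite LR triple), and a parallel argument handles \eqref{eq:Vin}. Write $\{W_i\}_{i=0}^d$ for the $(A,B)$-decomposition and $\{X_i\}_{i=0}^d$ for the $(B,C)$-decomposition, so that $E_i V = W_i$ and $E'_i V = X_i$.

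The key structural input is that $A$ is purely off-diagonal on $\{X_i\}$. Indeed, Lemma~\ref{lem:based}(ii) says $A$ is irreducible tridiagonal on $\{X_i\}$, and Definition~\ref{def:bipartite} says $A$ is zero-diagonal on $\{X_i\}$; combining these,
\begin{align*}
  A X_i &\subseteq X_{i-1} + X_{i+1}  && (0 \leq i \leq d).
\end{align*}
From Lemmas~\ref{lem:Alowers}(ii) and~\ref{lem:Braises}(ii) I also record $W_d = \Ker B = X_0$. Now I would induct on $k$, for $0 \leq k \leq d/2$, to prove
\begin{align*}
  W_d + W_{d-2} + \cdots + W_{d-2k} &= X_0 + X_2 + \cdots + X_{2k}.
\end{align*}
The base case $k=0$ is $W_d = X_0$. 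For the inductive step, by Lemma~\ref{lem:Alowers}(i), $W_{d-2k-2} = A^2 W_{d-2k}$; applying the displayed inclusion twice gives $A^2 X_{2j} \subseteq X_{2j-2} + X_{2j} + X_{2j+2}$, so the inductive hypothesis yields $W_{d-2k-2} \subseteq X_0 + X_2 + \cdots + X_{2k+2}$. Together with the inductive hypothesis this gives the $\subseteq$ direction at stage $k+1$; since both sides are direct sums of $k+2$ one-dimensional summands drawn from decompositions of $V$, a dimension count upgrades containment to equality. Setting $k = d/2$ (permitted since $d$ is even by Lemma~\ref{lem:deven}) yields the first equality of \eqref{eq:Vout}. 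An analogous induction, initiated by $W_{d-1} = A X_0 \subseteq X_1$ (forced to equality by dimensions), produces the first equality of \eqref{eq:Vin}.

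The main obstacle is really the structural fact $A X_i \subseteq X_{i-1} + X_{i+1}$: once this zero-diagonal-plus-tridiagonal form of $A$ on the $(B,C)$-decomposition is pinned down, the induction and dimension bookkeeping are routine, and the cyclic invariance of the bipartite hypothesis (the triples $A,B,C$, $B,C,A$, $C,A,B$ are simultaneously bipartite) closes out the remaining equalities without any new work.
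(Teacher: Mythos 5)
Your proof is correct, and there is nothing in the paper to compare it against: the paper simply cites \cite[Lemma 16.12]{T:LRT} and gives no argument of its own, so you have supplied a genuine self-contained verification. Your reduction is sound: combining Lemma~\ref{lem:based}(ii) (irreducible tridiagonal) with Definition~\ref{def:bipartite} (zero-diagonal) does force $A X_i \subseteq X_{i-1} + X_{i+1}$ on the $(B,C)$-decomposition $\{X_i\}$; Lemmas~\ref{lem:Alowers}(ii) and \ref{lem:Braises}(ii) give $W_d = \Ker B = X_0$; Lemma~\ref{lem:Alowers}(i) gives $W_{d-2k-2}=A^2 W_{d-2k}$; and the dimension count that upgrades $\subseteq$ to $=$ is valid because each side is a direct sum of one-dimensional summands taken from a single decomposition of $V$. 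The base of the odd chain, $W_{d-1}=A W_d = A X_0 \subseteq X_1$ with $A W_d \ne 0$ forcing equality, is likewise correct. The only step you leave implicit is that the cyclic rotation $B,C,A$ is again a bipartite LR triple; this is immediate from Definition~\ref{def:bipartite} since the three zero-diagonal conditions are cyclically symmetric, and from Lemma~\ref{lem:BAdecomp} for the LR-pair conditions, but it would be worth a sentence. With that noted, the argument is complete.
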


\begin{definition}   \label{def:VoutVin}    \samepage
\ifDRAFT {\rm def:VoutVin}. \fi
With reference to Lemma \ref{lem:Vout},
let  $V_\text{out}$ and $V_\text{in}$ denote the common values
of \eqref{eq:Vout} and \eqref{eq:Vin}, respectively.
Observe that
\begin{align*}
  V &= V_\text{out} + V_\text{in}  && \text{(direct sum)}.
\end{align*}
\end{definition}

\begin{definition}    \label{def:XoutXin}    \samepage
\ifDRAFT {\rm def:XoutXin}. \fi
Let $X$ be an element in $\text{\rm End}(V)$.
With reference to Definition \ref{def:VoutVin},
we define elements $X_\text{out}$ and $X_\text{in}$ in $\text{End}(V)$ as follows.
The element $X_\text{out}$ acts on $V_\text{out}$ as $X$,
and on $V_\text{in}$ as zero.
The element $X_\text{in}$ acts on $V_\text{in}$ as $X$,
and on $V_\text{out}$ as zero.
Observe that
\[
   X = X_\text{out} + X_\text{in}.
\]
\end{definition}

\section{The parameter array and the Toeplitz data}
\label{sec:pre}

In this section we recall some data for an LR triple.
Let $A,B,C$ be an LR triple on $V$.

\begin{definition}  {\rm (See \cite[Definition 13.5]{T:LRT}.) }
\label{def:parray}    \samepage
\ifDRAFT {\rm def:parray}. \fi
By the {\em parameter array} for $A,B,C$ we mean the sequence
\begin{equation}                      \label{eq:parray}
    (\{\vphi_i\}_{i=1}^d, \{\vphi'_i\}_{i=1}^d, \{\vphi''_i\}_{i=1}^d),
\end{equation}
where  $\{\vphi_i\}_{i=1}^d$ (resp.\ $\{\vphi'_i\}_{i=1}^d$) (resp.\ $\{\vphi''_i\}_{i=1}^d$)
denotes the parameter sequence for the LR pair $A,B$  (resp.\ $B,C$) (resp.\ $C,A$).
\end{definition}

\begin{definition}   \label{def:compatible}    \samepage
\ifDRAFT {\rm def:compatible}. \fi
Let $\{v_i\}_{i=0}^d$ and $\{v'_i\}_{i=0}^d$ be bases for $V$.
We say $\{v_i\}_{i=0}^d$ and $\{v'_i\}_{i=0}^d$ are {\em compatible}
whenever $v_0 = v'_0$.
\end{definition}

\begin{definition}  {\rm (See \cite[Definition 13.44]{T:LRT}.)}
\label{def:T}    \samepage
\ifDRAFT {\rm def:T}. \fi
We define matrices $T,T',T''$ in $\Mat_{d+1}(\F)$ as follows:
\begin{itemize}
\item[\rm (i)]
$T$ is the transition matrix from an $(C,B)$-basis to the compatible
$(C,A)$-basis.
\item[\rm (ii)]
$T'$ is the transition matrix from an $(A,C)$-basis to the compatible
$(A,B)$-basis.
\item[\rm (iii)]
$T''$ is the transition matrix from an $(B,A)$-basis to the compatible
$(B,C)$-basis.
\end{itemize}
\end{definition}

\begin{lemma}  {\rm (See \cite[Lemma 13.41]{T:LRT}.) }
 \label{lem:T}   \samepage
\ifDRAFT {\rm lem:T}. \fi
With reference to Definition \ref{def:T}, the matrices $T,T',T''$
are upper triangular and Toeplitz.
\end{lemma}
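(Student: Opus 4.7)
The plan is to unpack the definitions and reduce the statement directly to Lemma \ref{lem:Toeplitz}. The key observation is that each of the three transition matrices in Definition \ref{def:T} goes between two bases that are simultaneously lowered by one common element of $\text{End}(V)$, namely the element whose name appears first in both basis labels.

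Concretely, for $T$ I would pick a $(C,B)$-basis $\{u_i\}_{i=0}^d$ and the compatible $(C,A)$-basis $\{v_i\}_{i=0}^d$. By Definition \ref{def:ABbasis} (applied with $C$ in place of $A$), both bases satisfy $C u_i = u_{i-1}$ and $C v_i = v_{i-1}$ for $0 \leq i \leq d$, with $u_{-1}=v_{-1}=0$. Thus $C$ is a witness for condition (i) of Lemma \ref{lem:Toeplitz}, and so condition (ii) of that lemma yields that the transition matrix from $\{u_i\}_{i=0}^d$ to $\{v_i\}_{i=0}^d$, which is $T$, is upper triangular and Toeplitz. The analogous argument works verbatim for $T'$ using $A$ (since both an $(A,C)$-basis and an $(A,B)$-basis are lowered by $A$) and for $T''$ using $B$ (since both a $(B,A)$-basis and a $(B,C)$-basis are lowered by $B$).

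There is essentially no obstacle here: the hypotheses of Lemma \ref{lem:Toeplitz} are built into the notion of an $(X,Y)$-basis from Definition \ref{def:ABbasis}, so the entire content of Lemma \ref{lem:T} is a triple application of Lemma \ref{lem:Toeplitz}. The compatibility assumption $u_0 = v_0$ from Definition \ref{def:compatible} is not needed to get the upper triangular Toeplitz shape (Lemma \ref{lem:Toeplitz} does not require it); it only pins down the diagonal entry to be $1$, which is a byproduct the reader can verify but which is not asserted in the statement being proved.
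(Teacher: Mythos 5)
Your argument is correct. The paper itself does not give a proof of this lemma but only cites it from \cite[Lemma 13.41]{T:LRT}; the argument you supply is the natural one available from the tools recalled in the paper, and it is valid. The key observation is exactly right: by Definition \ref{def:ABbasis} (with the relevant pair of letters substituted), a $(C,B)$-basis $\{u_i\}_{i=0}^d$ satisfies $Cu_i=u_{i-1}$ and a $(C,A)$-basis $\{v_i\}_{i=0}^d$ satisfies $Cv_i=v_{i-1}$, so $C$ is a witness for condition (i) of Lemma \ref{lem:Toeplitz}, and condition (ii) of that lemma gives that the transition matrix $T$ is upper triangular Toeplitz. The same pattern applies verbatim for $T'$ with $A$ (both an $(A,C)$-basis and an $(A,B)$-basis are shifted down by $A$) and for $T''$ with $B$. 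Your side remark is also accurate: the compatibility requirement $u_0=v_0$ in Definition \ref{def:compatible} plays no role in getting the upper triangular Toeplitz shape; it only pins down the $(0,0)$-entry to be $1$, which the paper records separately in Lemma \ref{lem:alpha0}.
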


\begin{definition}  {\rm (See \cite[Definition 13.45]{T:LRT}.)}
 \label{def:Toeplitzdata}    \samepage
\ifDRAFT {\rm def:Toeplitzdata}. \fi
Let $T,T',T''$ be from Definition \ref{def:T}.
By the {\em Toeplitz data} for $A,B,C$ we mean the sequence
\begin{equation}             \label{eq:Toeplitzdata}
  (\{\alpha_i\}_{i=0}^d, \{\alpha'_i\}_{i=0}^d, \{\alpha''_i\}_{i=0}^d),
\end{equation}
where $\{\alpha_i\}_{i=0}^d$ (resp.\ $\{\alpha'_i\}_{i=0}^d$)
(resp.\ $\{\alpha''_i\}_{i=0}^d$) are the parameters of $T$ (resp.\ $T'$) (resp.\ $T''$).
For notational convenience, define each of $\alpha_{d+1}$, $\alpha'_{d+1}$, $\alpha''_{d+1}$
is zero.
\end{definition}

For the rest of this section, 
let \eqref{eq:parray} and \eqref{eq:Toeplitzdata} be the parameter array
and the Toeplitz data for $A,B,C$, respectively.

\begin{lemma}  {\rm (See \cite[Lemma 13.46]{T:LRT}.)}
\label{lem:alpha0}    \samepage
\ifDRAFT {\rm lem:alpha0}. \fi
We have $\alpha_0 =1$, $\alpha'_0=1$, $\alpha''_0=1$.
\end{lemma}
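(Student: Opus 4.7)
The plan is that this lemma is essentially an immediate consequence of unwinding the compatibility condition in Definition \ref{def:compatible} against the upper triangular Toeplitz form of $T, T', T''$ guaranteed by Lemma \ref{lem:T}. There is no real computation to do; the only subtle point is bookkeeping the transition matrix convention.

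Concretely, I would begin with $\alpha_0$. Let $\{u_i\}_{i=0}^d$ be an $(C,B)$-basis and let $\{v_i\}_{i=0}^d$ be the compatible $(C,A)$-basis used to define $T$ in Definition \ref{def:T}(i). By Definition \ref{def:compatible}, compatibility gives $u_0 = v_0$. Expressing the target basis in terms of the source via the transition matrix gives $v_j = \sum_{i=0}^d T_{ij}\, u_i$ for $0 \leq j \leq d$. Specializing to $j=0$ and invoking the upper triangular Toeplitz form (Lemma \ref{lem:T}), only the $i=0$ term survives, yielding $v_0 = T_{00}\, u_0 = \alpha_0\, u_0$. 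Since $u_0 = v_0$ is a nonzero basis vector, we conclude $\alpha_0 = 1$.

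The same reasoning applies verbatim to $T'$ and $T''$: for $\alpha'_0$, take an $(A,C)$-basis $\{u'_i\}$ and the compatible $(A,B)$-basis $\{v'_i\}$, whose compatibility $u'_0 = v'_0$ combined with the upper triangular Toeplitz form of $T'$ forces $\alpha'_0 = 1$; the argument for $\alpha''_0$ is identical using the $(B,A)$ and $(B,C)$ bases defining $T''$. No other structure of the LR triple is needed — the conclusion depends only on the definitions of compatibility and of the Toeplitz parameters.

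The only potential obstacle is notational: one must check the orientation of the transition matrix (source-to-target versus target-to-source) used in \cite{T:LRT}, since with the opposite convention the roles of rows and columns swap. In either convention, however, the diagonal entries of $T, T', T''$ are all equal to the Toeplitz parameter $\alpha_0$ (resp.\ $\alpha'_0$, $\alpha''_0$), and the compatibility identity on the zeroth basis vector still collapses the corresponding diagonal expression to $\alpha_0 \cdot (\text{nonzero vector}) = (\text{same nonzero vector})$. So the result follows regardless.
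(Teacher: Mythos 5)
The paper does not actually prove this lemma; it simply cites \cite[Lemma 13.46]{T:LRT}, so there is no internal argument to compare against. Your proposal is a correct self-contained proof: the key point — that compatibility forces the two zeroth basis vectors to agree, while upper triangularity of the Toeplitz transition matrix (Lemma \ref{lem:T}) means the zeroth vector of the target basis equals $\alpha_0$ times the zeroth vector of the source basis — is precisely what is needed, and you handle the transition-matrix orientation ambiguity correctly by noting that in either convention the diagonal entry is $\alpha_0$ and the same identity on the zeroth vector results. The argument for $\alpha'_0$ and $\alpha''_0$ is indeed verbatim. This is the standard unwinding of Definitions \ref{def:compatible}, \ref{def:T}, and \ref{def:Toeplitzdata}, and is surely how the cited source argues as well.
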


\begin{lemma}   {\rm (See \cite[Lemmas 13.7, 13.22, 13.48]{T:LRT}.) }
 \label{lem:scalarmultiple}    \samepage
\ifDRAFT {\rm lem:scalarmultiple}. \fi
Let $\alpha, \beta, \gamma$ be nonzero scalars in $\F$.
Then $\alpha A, \beta B, \gamma C$ is an LR triple.
For this LR triple, 
\begin{itemize}
\item[\rm (i)]
the parameter array is
$ ( \{\alpha \beta \vphi_i\}_{i=1}^d,  \{ \beta \gamma \vphi'_i \}_{i=1}^d, 
\{ \gamma \alpha \vphi''_i\}_{i=1}^d)$;
\item[\rm (ii)]
the idempotent data is equal to the idempotent data for $A,B,C$;
\item[\rm (iii)]
the Toeplitz data is
$ (\{ \gamma^{-i} \alpha_i\}_{i=0}^d, \{\alpha^{-i} \alpha'_i \}_{i=0}^d, \{\beta^{-i} \alpha''_i\}_{i=0}^d)$.
\end{itemize}
\end{lemma}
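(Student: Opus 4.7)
The plan has three stages, corresponding to showing that $\alpha A, \beta B, \gamma C$ is an LR triple and then establishing parts (ii), (i), (iii) in that order. For the LR triple claim I would observe that nonzero scaling preserves the action on $1$-dimensional subspaces, so the $(A,B)$-decomposition $\{V_i\}_{i=0}^d$ satisfies $(\alpha A) V_i = V_{i-1}$ and $(\beta B) V_i = V_{i+1}$; hence it is also the $(\alpha A, \beta B)$-decomposition, and the same reasoning applies to the pairs $B,C$ and $C,A$. In particular the three decompositions of $\alpha A, \beta B, \gamma C$ coincide with those of $A,B,C$, so part (ii) is immediate from Definition \ref{def:idempotentseq}.

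For part (i), I would invoke Definition \ref{def:parameterseq}: the scalar $\vphi_i$ is the eigenvalue of $BA$ on $V_i$, so $(\beta B)(\alpha A) = \alpha\beta\, BA$ has eigenvalue $\alpha\beta\vphi_i$ on $V_i$, giving the parameter sequence $\{\alpha\beta\vphi_i\}_{i=1}^d$ for the LR pair $\alpha A, \beta B$. The same argument handles the other two pairs.

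For part (iii), the plan is to scale the old bases to produce bases for the new LR triple, and then read off the transition matrices. Given a $(C,B)$-basis $\{w_i\}_{i=0}^d$ and the compatible $(C,A)$-basis $\{v_i\}_{i=0}^d$, related by $v_j = \sum_{i \leq j} \alpha_{j-i} w_i$, set $\tilde{w}_i = \gamma^{-i} w_i$ and $\tilde{v}_i = \gamma^{-i} v_i$. A direct check using $(\gamma C)\tilde{w}_i = \gamma^{1-i} w_{i-1} = \tilde{w}_{i-1}$ shows that $\{\tilde{w}_i\}$ is a $(\gamma C, \beta B)$-basis and $\{\tilde{v}_i\}$ is the compatible $(\gamma C, \alpha A)$-basis, compatibility surviving because the scaling is trivial at index $0$. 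Then
\begin{align*}
\tilde{v}_j \;=\; \gamma^{-j} v_j \;=\; \gamma^{-j} \sum_{i \leq j} \alpha_{j-i}\, w_i \;=\; \sum_{i \leq j} \gamma^{-(j-i)} \alpha_{j-i}\, \tilde{w}_i,
\end{align*}
so the new Toeplitz parameters are $\tilde{\alpha}_k = \gamma^{-k}\alpha_k$. Analogous scalings by $\alpha^{-i}$ and $\beta^{-i}$ of the bases associated with $T'$ and $T''$ produce the other two rescalings $\tilde{\alpha}'_k = \alpha^{-k}\alpha'_k$ and $\tilde{\alpha}''_k = \beta^{-k}\alpha''_k$.

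The main obstacle is purely bookkeeping: keeping straight which of $\alpha,\beta,\gamma$ governs which transition matrix, by tracking the role of the first operator ($C$, $A$, or $B$) in the pair of bases named in Definition \ref{def:T}. Once that correspondence is nailed down, the Toeplitz rescaling falls out of a single displayed calculation as above.
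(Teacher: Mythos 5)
Your proof is correct. Note, however, that the paper does not prove this lemma itself: it imports it from Terwilliger's work (\cite[Lemmas 13.7, 13.22, 13.48]{T:LRT}), so there is no in-paper argument to compare against. Your verification is the natural direct one: the three decompositions are unchanged under nonzero scaling of $A,B,C$ (each $V_i$ is $1$-dimensional), which gives the LR-triple claim and part (ii) at once; part (i) follows immediately from Definition \ref{def:parameterseq} since, e.g., $(\beta B)(\alpha A) = \alpha\beta\,BA$; and for part (iii) your rescaling $\tilde{w}_i = \gamma^{-i}w_i$, $\tilde{v}_i = \gamma^{-i}v_i$ correctly produces a $(\gamma C,\beta B)$-basis and the compatible $(\gamma C,\alpha A)$-basis (compatibility holds because $\gamma^{0}=1$), and the displayed computation correctly reads off $\tilde{\alpha}_k = \gamma^{-k}\alpha_k$; the analogous scalings by $\alpha^{-i}$ and $\beta^{-i}$ for $T'$ (lowering operator $A$) and $T''$ (lowering operator $B$) give the other two entries of the Toeplitz data. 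The only point worth spelling out more explicitly is that $\tilde{w}_i$ and $\tilde{v}_i$ still lie in the correct $1$-dimensional components of the respective decompositions, but this is immediate from the scaling.
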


\begin{lemma}  {\rm (See \cite[Lemma 16.5]{T:LRT}.)}
\label{lem:alpha1}   \samepage
\ifDRAFT {\rm lem:alpha1}. \fi
Assume that $A,B,C$ is nonbipartite.
Then $d \geq 1$.
Moreover each of
$\alpha_1$, $\alpha'_1$, $\alpha''_1$ is nonzero.
\end{lemma}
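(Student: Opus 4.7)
The first assertion is immediate: by Lemma~\ref{lem:d0}, if $d=0$ then the triple is bipartite, contradicting the hypothesis. So $d \geq 1$.

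For the second assertion, I plan to prove the contrapositive via the following key claim: the vanishing of any one of $\alpha_1$, $\alpha'_1$, $\alpha''_1$ forces the corresponding operator to be zero-diagonal on an appropriate decomposition. I will sketch this for $\alpha_1$; the other two are analogous by cyclic permutation of $A,B,C$. Fix a $(C,B)$-basis $\{u_i\}_{i=0}^d$ of $V$ and the compatible $(C,A)$-basis $\{v_i\}_{i=0}^d$, so $u_0 = v_0$. By Definition~\ref{def:T} and Lemma~\ref{lem:T}, the transition matrix $T$ between these bases is upper triangular Toeplitz with parameters $\alpha_0=1,\alpha_1,\ldots,\alpha_d$. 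In the $\{v_i\}$-basis, the relation $A v_i = \vphi''_{i+1} v_{i+1}$ (Lemma~\ref{lem:ABbasis} applied to the LR pair $C,A$) means that $A$ is represented by the matrix $S_{\vphi''_1,\ldots,\vphi''_d}$. Conjugating by $T$ then yields the matrix of $A$ in the $\{u_i\}$-basis.

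A direct computation, using $\alpha_0 = 1$ and the fact that the first off-diagonal parameter of $T^{-1}$ equals $-\alpha_1$, shows that the diagonal entries of this conjugated matrix are $-\alpha_1 \vphi''_1$ at position $0$, $\alpha_1(\vphi''_i - \vphi''_{i+1})$ at positions $1 \leq i \leq d-1$, and $\alpha_1 \vphi''_d$ at position $d$. Since $\vphi''_1$ and $\vphi''_d$ are nonzero (Definition~\ref{def:parameterseq}), all these diagonal entries vanish if and only if $\alpha_1 = 0$. Hence $\alpha_1 = 0$ is equivalent to $A$ being zero-diagonal on the $(C,B)$-decomposition, and by Lemma~\ref{lem:BAdecomp} this is the same as being zero-diagonal on the $(B,C)$-decomposition. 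Analogous characterizations for $\alpha'_1$ and $\alpha''_1$ follow by applying the same analysis after cyclically permuting $A,B,C$.

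The main obstacle is closing the gap between the vanishing of a single $\alpha$ and the full bipartite condition, which by Definition~\ref{def:bipartite} requires all three zero-diagonal conditions to hold simultaneously. One either needs to exhibit an LR-triple structural identity forcing these three conditions to be equivalent, or to argue directly that the vanishing of one $\alpha$ forces the vanishing of the other two. I expect this step to be the hardest, likely requiring further algebraic identities among the parameter array and the Toeplitz data established in the classification of LR triples in \cite{T:LRT}.
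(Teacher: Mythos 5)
The paper itself does not prove this lemma; it is cited verbatim from \cite[Lemma 16.5]{T:LRT}. So there is no in-paper argument to compare against, and I will evaluate your proposal on its own terms.

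Your treatment of the first assertion ($d\geq 1$) is correct. Your computational argument relating $\alpha_1$ to a zero-diagonal condition is also essentially sound: the matrix of $A$ in a $(C,B)$-basis is $T\,S_{\vphi''_1,\ldots,\vphi''_d}\,T^{-1}$, and since $\alpha_0=1$ (Lemma~\ref{lem:alpha0}) the $(0,0)$-entry of this conjugate is $\pm\alpha_1\vphi''_1$ with $\vphi''_1\neq 0$, so $\alpha_1=0$ if and only if $A$ is zero-diagonal on the $(C,B)$-decomposition, equivalently on the $(B,C)$-decomposition. (Your signs are off, but this does not affect the conclusion, and the middle diagonal entries $\pm\alpha_1(\vphi''_{i+1}-\vphi''_i)$ can vanish without $\alpha_1=0$, so you should lean on the corner entries rather than on ``all diagonal entries vanish.'')

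However, you leave the decisive step open, calling it ``the hardest'' and saying it ``likely requires further algebraic identities.'' In fact the paper already supplies exactly that identity: Lemma~\ref{lem:alpha1vphii} states
\[
\frac{\alpha_1}{\vphi_i} \;=\; \frac{\alpha'_1}{\vphi'_i} \;=\; \frac{\alpha''_1}{\vphi''_i}
\qquad (1\leq i\leq d),
\]
and all the $\vphi_i,\vphi'_i,\vphi''_i$ are nonzero (Definition~\ref{def:parameterseq}). Hence $\alpha_1=0$, $\alpha'_1=0$, $\alpha''_1=0$ are mutually equivalent. Combining this with your three zero-diagonal characterizations immediately yields: if any one of $\alpha_1,\alpha'_1,\alpha''_1$ vanishes then all three do, so all three zero-diagonal conditions in Definition~\ref{def:bipartite} hold and the triple is bipartite. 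Taking the contrapositive finishes the lemma. So the gap you flagged is real in the sense that your writeup does not fill it, but it is filled by a single application of a lemma already stated in this paper, and is not the hard step you anticipated.
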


\begin{lemma} {\rm (See \cite[Corollary 14.5]{T:LRT}.)}  
\label{lem:alpha1vphii}    \samepage
\ifDRAFT {\rm lem:alpha1vphii}. \fi
For $1 \leq i \leq d$,
\begin{equation*}     
   \frac{\alpha_1}{\vphi_i} = \frac{\alpha'_1}{\vphi'_i}
  = \frac{\alpha''_1}{\vphi''_i}.
\end{equation*}
\end{lemma}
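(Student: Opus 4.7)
The plan is to establish the single equality $\alpha_1/\vphi_i = \alpha'_1/\vphi'_i$ for $1 \le i \le d$, and then obtain $\alpha'_1/\vphi'_i = \alpha''_1/\vphi''_i$ by applying the same argument to the cyclically shifted LR triple $B,C,A$, under which the parameter array and the Toeplitz data permute cyclically ($\vphi \mapsto \vphi' \mapsto \vphi''$ and $\alpha_1 \mapsto \alpha'_1 \mapsto \alpha''_1$). The guiding idea is that the single element $B$ admits two natural matrix representations coming from the LR triple---its matrix in an $(A,C)$-basis and its matrix in a $(C,A)$-basis---whose diagonal entries carry, respectively, the data $(\alpha'_1,\vphi_i)$ and $(\alpha_1,\vphi'_i)$, and a direct comparison will force the ratio identity.

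To carry this out, I would first fix a compatible pair $\{v_i\}$, $\{w_i\}$ of $(A,B)$- and $(A,C)$-bases, so that the transition matrix $T'$ has parameters $\{\alpha'_i\}$. Since Lemma \ref{lem:ABbasis} gives $B^{(A,B)} = S_{\vphi_1,\ldots,\vphi_d}$, a straightforward multiplication $B^{(A,C)} = T' S_{\vphi_1,\ldots,\vphi_d}(T')^{-1}$ using the Toeplitz form of $T'$ yields the diagonal entries $(B^{(A,C)})_{0,0} = \alpha'_1 \vphi_1$, $(B^{(A,C)})_{i,i} = \alpha'_1(\vphi_{i+1}-\vphi_i)$ for $1 \le i \le d-1$, and $(B^{(A,C)})_{d,d} = -\alpha'_1 \vphi_d$. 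Symmetrically, I would fix a $(C,A)$-basis $\{w'_i\}$ and its compatible $(C,B)$-basis $\{u_i\}$, with transition matrix $T$ of parameters $\{\alpha_i\}$; using $B^{(C,B)} = S_{\vphi'_d,\ldots,\vphi'_1}$ (from Lemmas \ref{lem:BAparam} and \ref{lem:ABbasis}) and the same conjugation, the diagonal entries of $B^{(C,A)} = T^{-1} S_{\vphi'_d,\ldots,\vphi'_1} T$ are $-\alpha_1 \vphi'_d$ at position $0$, $\alpha_1(\vphi'_{d-i+1}-\vphi'_{d-i})$ for $1 \le i \le d-1$, and $\alpha_1 \vphi'_1$ at position $d$.

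The structural step is to link the two bases. From $C w_i = \vphi''_{d-i} w_{i+1}$ (coming from Lemma \ref{lem:BAparam} applied to the pair $A,C$), one checks that the vectors $w'_i := w_{d-i}/(\vphi''_1 \cdots \vphi''_i)$ satisfy $C w'_i = w'_{i-1}$ and $A w'_i = \vphi''_{i+1} w'_{i+1}$, so they form a $(C,A)$-basis; with this normalization the change-of-basis matrix between $\{w_i\}$ and $\{w'_i\}$ is anti-diagonal with entries $c_i = 1/(\vphi''_1 \cdots \vphi''_i)$. Conjugating any operator $X$ by this anti-diagonal matrix gives $(X^{(A,C)})_{ij} = (c_{d-i}/c_{d-j})(X^{(C,A)})_{d-i,d-j}$, and in particular the $\vphi''$-factors cancel on the diagonal to give $(X^{(A,C)})_{i,i} = (X^{(C,A)})_{d-i,d-i}$ for all $i$. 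Applying this with $X = B$ and matching the two explicit diagonal formulas yields the boundary equations $\alpha'_1 \vphi_1 = \alpha_1 \vphi'_1$ and $\alpha'_1 \vphi_d = \alpha_1 \vphi'_d$ together with the inductive relations $\alpha'_1(\vphi_j - \vphi_{j-1}) = \alpha_1(\vphi'_j - \vphi'_{j-1})$ for $2 \le j \le d$. Setting $f_j := \alpha_1 \vphi'_j - \alpha'_1 \vphi_j$, the inductive relations give $f_j = f_{j-1}$ and the first boundary gives $f_1 = 0$, so $f_j \equiv 0$ and the desired equality follows; the cyclic shift then yields the second equality.

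I expect the main obstacle to be the structural comparison of the third paragraph: recognizing that the $(A,C)$- and $(C,A)$-bases are simply indexed reversals of one another, rescaled by the successive products of the $\vphi''_i$, and that consequently the diagonal entries of any operator in the two bases match under the involution $i \leftrightarrow d-i$. Once that identification is in place, the remaining work is a pair of Toeplitz matrix multiplications to read off the diagonal entries, together with a one-line telescoping argument.
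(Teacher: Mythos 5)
This lemma is cited in the paper from \cite[Corollary 14.5]{T:LRT}; the paper does not supply its own proof, so there is nothing internal to compare against. Judged on its own, your argument is correct and self-contained. I verified the key computations: with the paper's convention that $T'$ is the transition matrix from the $(A,C)$-basis to the compatible $(A,B)$-basis (so $B^{(A,C)} = T' S_{\vphi_1,\ldots,\vphi_d}(T')^{-1}$), the diagonal entries come out exactly as you state, using $\alpha'_0=1$ and the $(0,1)$-entry $-\alpha'_1$ of $(T')^{-1}$; likewise for $B^{(C,A)} = T^{-1}S_{\vphi'_d,\ldots,\vphi'_1}T$. The structural observation that $w'_i := w_{d-i}/(\vphi''_1\cdots\vphi''_i)$ is a $(C,A)$-basis is right (it uses Lemma \ref{lem:BAparam} for the pair $A,C$ together with Lemma \ref{lem:ABbasis}), and since the corresponding change-of-basis matrix is anti-diagonal the scaling factors cancel on the diagonal, giving $(B^{(A,C)})_{ii}=(B^{(C,A)})_{d-i,d-i}$. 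Matching the two diagonals yields $f_1=0$ and $f_j=f_{j-1}$ for $2\le j\le d$ with $f_j=\alpha_1\vphi'_j-\alpha'_1\vphi_j$, so $f_j\equiv 0$; since each $\vphi_j,\vphi'_j$ is nonzero this is the first equality, and it holds vacuously in the bipartite case where $\alpha_1=\alpha'_1=0$. The cyclic shift to $B,C,A$ does permute both the parameter array and the Toeplitz data cyclically (check Definitions \ref{def:parray} and \ref{def:T}), giving the second equality. One small presentational point: you describe the change-of-basis matrix as ``anti-diagonal with entries $c_i$''; its nonzero entry in row $i$ is actually $c_{d-i}$, but this does not affect the diagonal cancellation $(c_{d-i}/c_{d-i})=1$ that you actually use.
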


\begin{lemma}  {\rm (See \cite[Lemma 16.3]{T:LRT}.)}
\label{lem:alphaA}    \samepage
\ifDRAFT {\rm lem:alphaA}. \fi
Assume that $A,B,C$ is bipartite (resp.\ nonbipartite).
Let $\alpha,\beta,\gamma$ be nonzero scalars in $\F$.
Then the LR triple
$\alpha A, \beta B, \gamma C$
is bipartite (resp.\ nonbipartite).
\end{lemma}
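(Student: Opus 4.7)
The plan is to reduce the claim to two elementary invariance observations, then assemble them.

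First, I would verify that scalar multiplication does not change any of the three decompositions attached to an LR triple. Suppose $\{V_i\}_{i=0}^d$ is the $(A,B)$-decomposition. Since each $V_i$ is one-dimensional and $AV_i = V_{i-1}$, we have $(\alpha A)V_i = \alpha V_{i-1} = V_{i-1}$, and similarly $(\beta B)V_i = V_{i+1}$. By the uniqueness in Lemma \ref{lem:ABdecomp}, $\{V_i\}_{i=0}^d$ is also the $(\alpha A, \beta B)$-decomposition. The same reasoning applies to the $(B,C)$- and $(C,A)$-decompositions. Hence the triple $\alpha A, \beta B, \gamma C$ has exactly the same three decompositions as $A,B,C$. (This is implicit in Lemma \ref{lem:scalarmultiple}(ii), but I would record the reasoning directly to keep the argument self-contained.)

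Second, I would observe that the zero-diagonal property is invariant under nonzero scalar multiplication on the map side. If $A$ is zero-diagonal on some decomposition $\{W_i\}_{i=0}^d$, meaning
\[
AW_i \subseteq W_0 + \cdots + W_{i-1} + W_{i+1} + \cdots + W_d \qquad (0 \leq i \leq d),
\]
then multiplying the containment by $\alpha \neq 0$ gives the same containment with $A$ replaced by $\alpha A$, since the right-hand side is an $\F$-subspace.

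Combining the two observations, the three zero-diagonality conditions that define bipartiteness for $A,B,C$ (Definition \ref{def:bipartite}) hold if and only if the corresponding conditions hold for $\alpha A, \beta B, \gamma C$ on the identical decompositions. Therefore $\alpha A, \beta B, \gamma C$ is bipartite exactly when $A,B,C$ is bipartite, and the nonbipartite case follows by taking the negation. There is no real obstacle here: the entire argument is a one-line invariance remark about zero-diagonality coupled with the observation that scaling a linear map does not alter which one-dimensional subspace it carries a given line into.
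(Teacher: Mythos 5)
Your argument is correct and complete. The paper itself does not prove this lemma; it is cited as \cite[Lemma 16.3]{T:LRT} with no proof supplied, so there is no in-paper argument to compare against. Your two observations — that for a nonzero scalar $\alpha$, the containment $AV_i \subseteq V_{i-1}$ (or raising, or zero-diagonality) is equivalent to the same containment for $\alpha A$ because the right-hand side is an $\F$-subspace, and that consequently each of the three decompositions of the scaled triple coincides with the corresponding decomposition of $A,B,C$ — do reduce Definition~\ref{def:bipartite} for $\alpha A, \beta B, \gamma C$ to Definition~\ref{def:bipartite} for $A,B,C$, and because both observations are equivalences (divide by $\alpha$, $\beta$, $\gamma$) the nonbipartite direction follows at once. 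One small point worth making explicit: you tacitly use that $\alpha A, \beta B, \gamma C$ is an LR triple so that its three decompositions exist; this is supplied by Lemma~\ref{lem:scalarmultiple}, which precedes the present lemma, or directly by your own first observation applied pairwise. With that noted, the proof is sound and in fact a clean, self-contained replacement for the citation.
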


\begin{lemma} {\rm (See \cite[Lemma 16.6]{T:LRT}.)}
\label{lem:bipartitealphai}    \samepage
\ifDRAFT {\rm lem:bipartitealphai}. \fi
Assume that $A,B,C$ is bipartite.
Then each of 
$\alpha_i$, $\alpha'_i$, $\alpha''_i$
is zero if $i$ is odd and nonzero if $i$ is even.
\end{lemma}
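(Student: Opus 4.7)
The plan is to exploit the $\Z/2$-grading $V = V_\text{out} \oplus V_\text{in}$ of Definition \ref{def:VoutVin} that is intrinsic to a bipartite LR triple. Because $A$ is zero-diagonal and (by Lemma \ref{lem:based}(ii)) irreducible tridiagonal on the $(B,C)$-decomposition $\{V_i''\}_{i=0}^d$, one has $A V_i'' \subseteq V_{i-1}'' + V_{i+1}''$; together with $V_\text{out} = \sum_j V_{2j}''$ and $V_\text{in} = \sum_j V_{2j+1}''$ from Lemma \ref{lem:Vout}, this shows that $A$ interchanges $V_\text{out}$ and $V_\text{in}$. By the cyclic symmetry of the bipartite condition together with the three-way equivalence in Lemma \ref{lem:Vout}, the same holds for $B$ and $C$. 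Consequently, in any basis $\{v_i\}_{i=0}^d$ drawn from one of the six ordered pairs among $A,B,C$, each $v_i$ is homogeneous with respect to the grading, lying in $V_\text{out}$ when $i$ is even and in $V_\text{in}$ when $i$ is odd.

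For the vanishing of the odd-indexed Toeplitz parameters, fix a $(C,B)$-basis $\{u_i\}_{i=0}^d$ and the compatible $(C,A)$-basis $\{w_i\}_{i=0}^d$ used to define $T$. The change of basis reads $w_j = \sum_{i=0}^{j} \alpha_{j-i} u_i$. The left-hand side is homogeneous of parity $j$, while each nonzero term $\alpha_{j-i} u_i$ on the right is homogeneous of parity $i$, so the direct-sum decomposition $V = V_\text{out} \oplus V_\text{in}$ forces $\alpha_{j-i} = 0$ whenever $j - i$ is odd. Letting $j$ range over $\{1,\ldots,d\}$ gives $\alpha_k = 0$ for all odd $k$. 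The arguments for $\{\alpha'_i\}$ and $\{\alpha''_i\}$ are identical.

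For the nonvanishing of the even-indexed parameters --- which I anticipate as the main obstacle --- I would pass to a reduced structure on $V_\text{out}$. By the first half, restricting $T$ to its even-indexed rows and columns yields an upper triangular Toeplitz matrix of size $(d/2+1) \times (d/2+1)$ with parameters $\alpha_0, \alpha_2, \ldots, \alpha_d$, realizing the change of basis from $\{u_{2j}\}_{j=0}^{d/2}$ to $\{w_{2j}\}_{j=0}^{d/2}$ on $V_\text{out}$. My plan is to recognize this restricted transition as the Toeplitz data of a derived structure on $V_\text{out}$ built from the compositions $AB$, $BC$, $CA$ (each of which preserves $V_\text{out}$), show that this derived structure is a \emph{nonbipartite} LR triple, and invoke Lemma \ref{lem:alpha1} to conclude $\alpha_2 \neq 0$; inducting on $d/2$ would then propagate nonvanishing through $\alpha_4,\ldots,\alpha_d$. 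The principal difficulty is verifying that the restricted operators genuinely assemble into a nonbipartite LR triple with the predicted reduced Toeplitz data, and either executing the induction cleanly or finding a direct algebraic identity (perhaps along the lines of Lemma \ref{lem:Toeplitz2}) that enforces nonvanishing of all even-indexed entries at once. The parallel assertions for $\{\alpha'_i\}$ and $\{\alpha''_i\}$ then follow by the cyclic symmetry of the setup.
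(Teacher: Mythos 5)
The paper does not actually prove this lemma: it is quoted from \cite[Lemma 16.6]{T:LRT}, so there is no in-paper argument to compare against, and your proposal must be judged on its own.

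Your first half --- the vanishing of the Toeplitz parameters of odd index --- is essentially sound. One step deserves to be made explicit: a $(C,B)$-basis vector $u_i$ lies in the $i$-th subspace of the $(C,B)$-decomposition, which by Lemma \ref{lem:BAdecomp} is the $(d-i)$-th subspace of the $(B,C)$-decomposition; it is only because $d$ is even (Lemma \ref{lem:deven}) that $u_i$ then has parity $i$ with respect to the grading $V = V_\text{out} \oplus V_\text{in}$ of Definition \ref{def:VoutVin}. Once that is noted, the parity argument you give --- comparing $w_j = \sum_{i\le j}\alpha_{j-i}u_i$ term by term against the direct sum $V_\text{out}\oplus V_\text{in}$ --- does force $\alpha_k = 0$ for $k$ odd, and cyclically for $\alpha'_k,\alpha''_k$.

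The second half has a genuine gap, which you yourself flag. Your proposed reduction --- that on $V_\text{out}$ the transition matrix $T'$ restricts to the upper triangular Toeplitz matrix with parameters $\alpha'_0,\alpha'_2,\ldots,\alpha'_d$, realizing the change of basis between the restricted $(A,C)$- and $(A,B)$-bases --- is a correct and useful observation, and the restrictions of $A^2,B^2,C^2$ to $V_\text{out}$ can be seen to form an LR triple with this Toeplitz data. But two things remain unestablished. First, you have not shown that this reduced triple is nonbipartite, and that claim cannot be taken for granted: by the very statement under proof (applied to the reduced triple), nonbipartiteness of the reduced triple is equivalent to $\alpha'_2 \neq 0$, so appealing to it directly is circular. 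One would need an independent verification, for instance showing that $A^2$ has a nonvanishing diagonal component on the restricted $(B,C)$-decomposition. Second, even granting nonbipartiteness, Lemma \ref{lem:alpha1} only delivers $\alpha'_2 \neq 0$; the induction you envisage to reach $\alpha'_4,\ldots,\alpha'_d$ is left entirely open, and it is not a one-line iteration (each further step would require re-establishing nonbipartiteness of a further reduction, or a different argument altogether). As written, the even-indexed nonvanishing is a research plan rather than a proof.
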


\begin{lemma}   {\rm (See \cite[Lemma 16.22]{T:LRT}.) }
\label{lem:alpha2vphii}    \samepage
\ifDRAFT {\rm lem:alpha2vphii}. \fi
Assume that $A,B,C$ is bipartite and $d \geq 2$.
Then the following hold for $1 \leq i,j \leq d$.
\begin{itemize}
\item[\rm (i)]
Assume that $i$, $j$ have opposite parity. Then
\begin{equation*} 
  \frac{\alpha_2} { \vphi_i \vphi_j } =
  \frac{\alpha'_2} { \vphi'_i \vphi'_j } =
  \frac{\alpha''_2} { \vphi''_i \vphi''_j }.
\end{equation*}
\item[\rm (ii)]
Assume that $i$, $j$ have the same parity. Then
\begin{equation*} 
  \frac{\vphi_i} { \vphi_j } =
  \frac{\vphi'_i} { \vphi'_j } =
 \frac{\vphi''_i} { \vphi''_j }. 
\end{equation*}
\end{itemize}
\end{lemma}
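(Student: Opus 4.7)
The plan is to extract the claimed three-fold equality by analysing the conjugation of the subdiagonal matrix $S_{\vphi_1,\ldots,\vphi_d}$ by an upper-triangular Toeplitz transition matrix, reading one diagonal further than the analysis that gives Lemma \ref{lem:alpha1vphii}.

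My first move would be to normalize using the three-parameter rescaling of Lemma \ref{lem:scalarmultiple}. Since $\alpha_2,\alpha'_2,\alpha''_2$ are nonzero by Lemma \ref{lem:bipartitealphai}, I can choose nonzero scalars $\alpha,\beta,\gamma$ so that after passing to the rescaled LR triple $\alpha A,\beta B,\gamma C$ we have $\alpha_2=\alpha'_2=\alpha''_2=1$; Lemma \ref{lem:alphaA} guarantees that this triple remains bipartite. Under this normalization (i) reduces to the single claim $\vphi_i\vphi_j=\vphi'_i\vphi'_j=\vphi''_i\vphi''_j$ for $i,j$ of opposite parity, and (ii) then follows by dividing two such equalities that share one common index.

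For the product equality I would mimic the transition-matrix computation behind Lemma \ref{lem:alpha1vphii}. Fix an $(A,B)$-basis $\{v_i\}_{i=0}^d$ and a compatible $(A,C)$-basis $\{v'_i\}_{i=0}^d$; by Lemma \ref{lem:ABbasis} the matrix of $B$ on $\{v_i\}_{i=0}^d$ is $S_{\vphi_1,\ldots,\vphi_d}$, and by Definition \ref{def:T} together with Lemma \ref{lem:T} the transition matrix $T'$ between the two bases is upper triangular Toeplitz with parameters $\alpha'_i$. The matrix of $B$ on $\{v'_i\}_{i=0}^d$ is therefore $(T')^{-1}S_{\vphi_1,\ldots,\vphi_d}T'$, and by \eqref{eq:I} (valid here by Lemma \ref{lem:based}(i)) this conjugated matrix is irreducible tridiagonal. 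Setting its entries to zero off the three main diagonals produces a chain of algebraic conditions: the first such condition recovers Lemma \ref{lem:alpha1vphii}, and the next couples $\alpha'_1,\alpha'_2$ with two consecutive $\vphi$'s linearly. In the bipartite case $\alpha'_1=0$ by Lemma \ref{lem:bipartitealphai}, so this second condition isolates $\alpha'_2$ and expresses the ratio $\alpha'_2/(\vphi_i\vphi_{i+1})$ as a constant independent of $i$. Cycling $A\to B\to C\to A$ and invoking Lemma \ref{lem:C} (anti-diagonal transpose) together with the index reversal from Lemma \ref{lem:BAparam} produces the analogous formulas for $\alpha_2$ and $\alpha''_2$, and comparing the three yields the desired triple equality.

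The main obstacle will be the second-order bookkeeping: at the subdiagonal level the relevant off-diagonal equation involves only $\alpha'_1$ and one $\vphi_i$, but the next level mixes $\alpha'_1,\alpha'_2$ with two consecutive $\vphi$'s, and one must verify that the vanishing $\alpha'_1=0$ in the bipartite case really decouples $\alpha'_2$ cleanly rather than only partially. Once this isolation is achieved, the parity split in the statement reflects the fact that the resulting recursion links $\vphi_i$ with $\vphi_{i+2}$, so the even- and odd-indexed $\vphi$'s evolve as two independent chains: identities of type (i) compare one even-indexed with one odd-indexed factor, while identities of type (ii) take ratios within a single chain.
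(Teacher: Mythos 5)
The paper does not prove this lemma: it is quoted verbatim from \cite[Lemma~16.22]{T:LRT} and used as an imported fact, so there is no internal argument to compare your sketch against. Judging your proposal on its own terms, I see two genuine gaps.

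First, the normalization. Using Lemma~\ref{lem:scalarmultiple} to force $\alpha_2=\alpha'_2=\alpha''_2=1$ requires solving $\gamma^2=\alpha_2$, $\alpha^2=\alpha'_2$, $\beta^2=\alpha''_2$ in $\F$, and an arbitrary field need not contain these square roots. (The quantity $\alpha_2/(\vphi_i\vphi_j)$ is indeed invariant under the rescaling, so one could pass to a quadratic extension, or instead use the six--parameter rescaling of Lemma~\ref{lem:bipartitescalar2}, which only requires $\gamma_{\text{out}}\gamma_{\text{in}}=\alpha_2$ and needs no square root; but as written the step is not available.)

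Second, and more seriously, the conjugation mechanism you describe does not produce the cross--branch identity the lemma asserts. Writing $B^\flat=T'\,S_{\vphi_1,\ldots,\vphi_d}\,(T')^{-1}$ (note the direction of conjugation, per Lemma~\ref{lem:C}), the vanishing of the $(i,j)$--entries with $j\ge i+2$ gives polynomial relations in the single family $\{\alpha'_k\}$ together with $\{\vphi_i\}$; the quantities $\vphi'_i,\vphi''_i,\alpha_k,\alpha''_k$ never enter. In particular the claim that ``the first such condition recovers Lemma~\ref{lem:alpha1vphii}'' cannot be right, since that lemma compares the three branches $\alpha_1/\vphi_i$, $\alpha'_1/\vphi'_i$, $\alpha''_1/\vphi''_i$ for fixed $i$, while the vanishing conditions never see $\vphi'_i$ or $\alpha_1$. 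Worse, in the bipartite case the relevant condition degenerates: with $\alpha'_1=\alpha'_3=0$ one gets $\beta_1=\beta_3=0$ for the inverse Toeplitz parameters, and the $(i,i+2)$--entry of $T'S_\vphi(T')^{-1}$ becomes $\vphi_i\beta_3+\alpha'_1\vphi_{i+1}\beta_2+\alpha'_2\vphi_{i+2}\beta_1+\alpha'_3\vphi_{i+3}\equiv 0$, so the ``second--order bookkeeping'' you flag as the main obstacle in fact collapses to $0=0$ and yields nothing. The step ``cycling $A\to B\to C\to A$ and comparing'' is where the actual content of the lemma lives --- some mechanism is needed to tie $\alpha'_2/(\vphi'_i\vphi'_j)$ to $\alpha_2/(\vphi_i\vphi_j)$, and that mechanism is not supplied. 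Without it, cycling merely gives three parallel (and, as shown, vacuous) families of intra--branch relations.
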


\begin{lemma}    {\rm (See \cite[Lemmas 16.31, 16.33]{T:LRT}.)}
\label{lem:bipartitescalar2}    \samepage
\ifDRAFT {\rm lem:bipartitescalar2}. \fi
Assume that $A,B,C$ is bipartite.
Let 
\[
 \alpha_\text{\rm out}, \quad
 \alpha_\text{\rm in}, \quad
 \beta_\text{\rm out}, \quad
 \beta_\text{\rm in}, \quad
 \gamma_\text{\rm out}, \quad
 \gamma_\text{\rm in}
\]
be nonzero scalars in $\F$.
Then the $3$-tuple
\[
 \alpha_\text{\rm out} A_\text{\rm out} + \alpha_\text{\rm in} A_\text{\rm in},  \qquad
 \beta_\text{\rm out} B_\text{\rm out} + \beta_\text{\rm in} B_\text{\rm in},  \qquad
 \gamma_\text{\rm out} C_\text{\rm out} + \gamma_\text{\rm in} C_\text{\rm in},
\]
is a bipartite LR triple.
For this LR triple,
\begin{itemize}
\item[\rm (i)]
the parameter array is
$(\{\vphi_i f_i\}_{i=1}^d, \{\vphi'_i f'_i\}_{i=1}^d, \{\vphi''_i f''_i\}_{i=1}^d)$,
where
\begin{align*}
  f_i &= \alpha_\text{\rm out} \beta_\text{\rm in},   &
  f'_i &= \beta_\text{\rm out} \gamma_\text{\rm in},   &
  f''_i &= \gamma_\text{\rm out} \alpha_\text{\rm in}   && \text{if $i$ is even},
\\
  f_i &= \alpha_\text{\rm in} \beta_\text{\rm out},   &
  f'_i &= \beta_\text{\rm in} \gamma_\text{\rm out},   &
  f''_i &= \gamma_\text{\rm in} \alpha_\text{\rm out}  && \text{if $i$ is odd};
\end{align*}
\item[\rm (ii)]
the idempotent data is equal to the idempotent data for $A,B,C$;
\item[\rm (iii)]
the Toeplitz data is
 $( \{\alpha_i g''_i\}_{i=0}^d, \{\alpha'_i g_i\}_{i=0}^d, \{\alpha''_i g'_i\}_{i=0}^d)$,
where
\begin{align*}
  g_i &= (\alpha_\text{\rm out} \alpha_\text{\rm in})^{-i/2},  &
  g'_i &= (\beta_\text{\rm out} \beta_\text{\rm in})^{-i/2},  &
  g''_i &= (\gamma_\text{\rm out} \gamma_\text{\rm in})^{-i/2} && \text{if $i$ is even},
\\
 g_i &=0, & g'_i &=0, & g''_i &= 0   && \text{if $i$ is odd}.
\end{align*}
\end{itemize}
\end{lemma}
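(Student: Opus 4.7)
The plan is to exploit the following structural observation: since the LR triple $A,B,C$ is bipartite, each $1$-dimensional component of the $(A,B)$-, $(B,C)$-, and $(C,A)$-decompositions lies entirely in either $V_\text{out}$ or $V_\text{in}$, according to the parity of its index. This follows from Definition \ref{def:idempotentseq} combined with Lemma \ref{lem:Vout}. Consequently, restricted to any such component, $\tilde A := \alpha_\text{out} A_\text{out} + \alpha_\text{in} A_\text{in}$ acts as a nonzero scalar multiple (either $\alpha_\text{out}$ or $\alpha_\text{in}$) of $A$, and similarly for $\tilde B$ and $\tilde C$.

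From this observation I would deduce immediately that the $(A,B)$-, $(B,C)$-, $(C,A)$-decompositions are lowered and raised by the corresponding $\tilde{}$-operators (rescaling a lowering map on a $1$-dimensional subspace by a nonzero scalar still gives a lowering map). Hence each pair among $\tilde A, \tilde B, \tilde C$ is an LR pair with the same $(A,B)$-, $(B,C)$-, $(C,A)$-decompositions, proving the 3-tuple is an LR triple and giving (ii). Bipartiteness is preserved by the same reasoning: the zero-diagonal condition on a decomposition is inherited by any operator that agrees with the original up to a nonzero scalar on each component.

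For the parameter array (i), I would apply Lemma \ref{lem:parameterseq}: $\tilde\vphi_i$ is the eigenvalue of $\tilde B \tilde A$ on $V_i$. Since $V_i$ and $V_{i-1}$ lie in opposite parts of the decomposition $V = V_\text{out} + V_\text{in}$, the restrictions $\tilde A|_{V_i}$ and $\tilde B|_{V_{i-1}}$ are explicit scalar multiples of $A$ and $B$, giving $\tilde\vphi_i = \alpha_\text{out}\beta_\text{in}\vphi_i$ for $i$ even and $\tilde\vphi_i = \alpha_\text{in}\beta_\text{out}\vphi_i$ for $i$ odd, in agreement with the stated $f_i$. The formulas for $\tilde\vphi'_i$ and $\tilde\vphi''_i$ follow by the cyclically analogous argument.

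The main obstacle is part (iii). I would fix a $(C,B)$-basis $\{u_i\}_{i=0}^d$ and the compatible $(C,A)$-basis $\{w_i\}_{i=0}^d$ for $A,B,C$, so that in the transition $w_j = \sum_i T_{ij} u_i$ the coefficient $T_{ij}$ equals $\alpha_{j-i}$. To produce corresponding bases for the new triple, I would rescale $\tilde u_i = c_i u_i$ and $\tilde w_i = c_i w_i$, where the $c_i$ are determined by $c_0 = 1$ and the requirement $\tilde C \tilde u_i = \tilde u_{i-1}$ (this simultaneously ensures $\tilde C \tilde w_i = \tilde w_{i-1}$ and the compatibility $\tilde u_0 = \tilde w_0$). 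Because $u_i$ lies in a component of the $(C,B)$-decomposition whose parity is determined by $i$, on which $\tilde C$ acts as $\gamma_\text{out}$ or $\gamma_\text{in}$ times $C$, the recursion yields $c_{2k} = (\gamma_\text{out}\gamma_\text{in})^{-k}$ and $c_{2k+1}$ a parity-shifted variant. The key verification is that for $j-i$ even the ratio $c_j c_i^{-1}$ depends only on $j-i$, equaling $(\gamma_\text{out}\gamma_\text{in})^{-(j-i)/2} = g''_{j-i}$, while for $j-i$ odd the entry $\alpha_{j-i}$ already vanishes by Lemma \ref{lem:bipartitealphai}. Thus $\tilde T$ is upper triangular and Toeplitz with parameters $\tilde\alpha_i = g''_i \alpha_i$. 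The formulas for $\tilde\alpha'_i$ and $\tilde\alpha''_i$ come from the cyclically analogous arguments, with $\alpha$-rescalings entering $T'$ (built from $A$-bases) and $\beta$-rescalings entering $T''$ (built from $B$-bases); keeping straight which scalar triple belongs to which Toeplitz matrix, governed by the cyclic role of the three operators in Definition \ref{def:T}, is the bookkeeping at the heart of the proof.
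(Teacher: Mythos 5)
Your proof is correct. Note that the paper itself does not prove this lemma; it cites \cite[Lemmas 16.31, 16.33]{T:LRT}, so there is no in-paper argument to compare against. Your self-contained argument is the natural one: the crucial structural fact, that every component of the $(A,B)$-, $(B,C)$-, and $(C,A)$-decompositions lies wholly in $V_\text{out}$ or $V_\text{in}$ according to parity, does follow from Lemma \ref{lem:Vout} (each $E_i V$, $E'_i V$, $E''_i V$ is a summand of the appropriate direct sum), and it immediately gives that the three decompositions are simultaneously lowered/raised by the rescaled operators, yielding the LR-triple claim and parts (ii) and bipartiteness. Part (i) is a direct eigenvalue computation via Lemma \ref{lem:parameterseq}, and the parities check out for all three parameter sequences. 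For part (iii), the rescaling $\tilde u_i = c_i u_i$, $\tilde w_i = c_i w_i$ works because both bases are $C$-lowering bases whose $i$-th vectors have the same $V_\text{out}/V_\text{in}$ parity (the $(C,B)$-component $V''_{d-i}$ and the $(C,A)$-component $V'''_i$ both have parity $i$ since $d$ is even), so the same recursion $c_i = c_{i-1}\gamma_\text{out}^{-1}$ or $c_{i-1}\gamma_\text{in}^{-1}$ applies to both; the verification that $c_j c_i^{-1} = (\gamma_\text{out}\gamma_\text{in})^{-(j-i)/2}$ when $j-i$ is even, together with $\alpha_{j-i}=0$ for $j-i$ odd by Lemma \ref{lem:bipartitealphai}, shows $\tilde T$ is upper triangular Toeplitz with the stated parameters, and the cyclic cases for $T'$, $T''$ go the same way with $\alpha$- and $\beta$-scalars respectively. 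A fully written-out version would make the indexing of (say) $u_i \in V''_{d-i}$ explicit, since that is where the parity bookkeeping lives, but the argument as sketched is sound.
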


\section{Proof of Theorems \ref{thm:nonbipartite}, \ref{thm:bipartite}, \ref{thm:main2}}
\label{sec:proofs}

In this section we prove Theorems \ref{thm:nonbipartite}, \ref{thm:bipartite}, \ref{thm:main2}.

\begin{lemma}    \label{lem:ACBCdecomp}    \samepage
\ifDRAFT {\rm lem:ACBCdecomp}. \fi
Let $A,B,C$ and $A,B,\tilde{C}$ be LR triples on $V$.
Then the following are equivalent:
\begin{itemize}
\item[\rm (i)]
the $(A,C)$-decomposition is equal to the $(A, \tilde{C})$-decomposition;
\item[\rm (ii)]
the  $(B,C)$-decomposition is equal to the $(B, \tilde{C})$-decomposition.
\end{itemize}
\end{lemma}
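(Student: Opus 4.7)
The plan is to exploit Lemma~\ref{lem:Braises}(ii) together with Lemma~\ref{lem:Vicoincide}. The key observation is that for any LR pair $X,Y$ on $V$ with $(X,Y)$-decomposition $\{W_i\}_{i=0}^d$, the top piece $W_d$ equals $\Ker Y$. Thus for the LR triple $A,B,C$, both the $(A,C)$-decomposition $\{V'_i\}_{i=0}^d$ and the $(B,C)$-decomposition $\{V''_i\}_{i=0}^d$ satisfy $V'_d = \Ker C = V''_d$. Analogous statements hold for the LR triple $A,B,\tilde C$, with the corresponding decompositions $\{\tilde V'_i\}_{i=0}^d$ and $\{\tilde V''_i\}_{i=0}^d$ satisfying $\tilde V'_d = \Ker \tilde C = \tilde V''_d$.

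To prove (i)$\Rightarrow$(ii), I would first observe that the hypothesis $\{V'_i\}_{i=0}^d = \{\tilde V'_i\}_{i=0}^d$ gives $\Ker C = V'_d = \tilde V'_d = \Ker \tilde C$. Consequently $V''_d = \Ker C = \Ker \tilde C = \tilde V''_d$. Now both $\{V''_i\}_{i=0}^d$ and $\{\tilde V''_i\}_{i=0}^d$ are lowered by $B$ (since each is a $(B,\cdot)$-decomposition), so Lemma~\ref{lem:Vicoincide} applied to $B$ with common top piece forces $V''_i = \tilde V''_i$ for all $i$, which is (ii).

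The implication (ii)$\Rightarrow$(i) is completely symmetric: from $V''_d = \tilde V''_d$ one extracts $\Ker C = \Ker \tilde C$, hence $V'_d = \tilde V'_d$, and then Lemma~\ref{lem:Vicoincide} applied to $A$ (which lowers both $\{V'_i\}_{i=0}^d$ and $\{\tilde V'_i\}_{i=0}^d$) yields the coincidence of the $(A,C)$- and $(A,\tilde C)$-decompositions.

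There is no serious obstacle here; the lemma is essentially a short bookkeeping exercise built on the identification of the top piece of the decomposition with the kernel of the raiser, combined with the rigidity statement of Lemma~\ref{lem:Vicoincide}.
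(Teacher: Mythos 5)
Your proof is correct and follows essentially the same route as the paper's: both identify the top piece of each decomposition with $\Ker C$ (respectively $\Ker \tilde C$) via Lemma~\ref{lem:Braises}(ii), then use Lemma~\ref{lem:Vicoincide} to promote agreement at the top piece to agreement of the full decomposition. No discrepancy to report.
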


\begin{proof}
(i)$\Rightarrow$(ii):
Let $\{V'_i\}_{i=0}^d$ (resp.\ $\{\tilde{V}'_i\}_{i=0}^d$) be the $(A,C)$-decomposition 
(resp.\ $(A,\tilde{C})$-decomposition).
Let $\{V''_i\}_{i=0}^d$ (resp.\ $\{\tilde{V}''_i\}_{i=0}^d$) be the $(B,C)$-decomposition 
(resp.\ $(B,\tilde{C})$-decomposition).
Lemma \ref{lem:Braises}(ii), 
$V'_d = \text{\rm Ker} \, C$, 
$\tilde{V}'_d = \text{\rm Ker} \, \tilde{C}$,
$V''_d = \text{\rm Ker} \, C$, 
$\tilde{V}''_d = \text{\rm Ker} \, \tilde{C}$.
By the assumption, $V'_d = \tilde{V}'_d$.
By these comments, $V''_d = \tilde{V}''_d$.
By this and Lemma \ref{lem:Vicoincide} $V''_i = \tilde{V}''_i$ for $0 \leq i \leq d$.
Thus (ii) holds.

(ii)$\Rightarrow$(i):
Similar.
\end{proof}

We use the following notation.

\begin{notation}   \label{notation}    \samepage
\ifDRAFT {\rm notation}. \fi
Let $A,B,C$ and $A,B,\tilde{C}$ be LR triples on $V$.
For the LR triple $A,B,C$, we denote the parameter array and the Toeplitz data
by
\begin{align*}
 &  ( \{\vphi_i\}_{i=1}^d, \{\vphi'_i\}_{i=1}^d, \{\vphi''_i\}_{i=1}^d),
&&    ( \{\alpha_i\}_{i=0}^d, \{\alpha'_i\}_{i=0}^d, \{\alpha''_i\}_{i=0}^d),
\end{align*}
respectively.
For the LR triple $A,B, \tilde{C}$, we denote the parameter array and the Toeplitz data
by
\begin{align*}
 & ( \{\tilde{\vphi}_i\}_{i=1}^d, \{\tilde{\vphi}'_i \}_{i=1}^d, \{ \tilde{\vphi}''_i \}_{i=1}^d),
&&  ( \{ \tilde{\alpha}_i \}_{i=0}^d,  \{ \tilde{\alpha}'_i \}_{i=0}^d, \{ \tilde{\alpha}''_i \}_{i=0}^d),
\end{align*}
respectively.
\end{notation}

\begin{lemma}    \label{lem:vphi}     \samepage
\ifDRAFT {\rm lem:vphi}. \fi
With reference to Notation \ref{notation},
$\vphi_i = \tilde{\vphi}_i$ for $1 \leq i \leq d$.
\end{lemma}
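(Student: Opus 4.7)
The plan is to observe that this is essentially immediate from unwinding the definitions. By Definition \ref{def:parray}, the sequence $\{\vphi_i\}_{i=1}^d$ appearing in the parameter array of the LR triple $A,B,C$ is defined as the parameter sequence of the LR pair $A,B$ (it is the first entry of the triple of parameter sequences, which depends only on $A$ and $B$, not on $C$). Likewise, $\{\tilde{\vphi}_i\}_{i=1}^d$ is defined as the parameter sequence of the LR pair $A,B$ in the context of the triple $A,B,\tilde{C}$, but again this uses only $A$ and $B$.

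I would then invoke Lemma \ref{lem:ABdecomp} to note that the $(A,B)$-decomposition $\{V_i\}_{i=0}^d$ is uniquely determined by the pair $A,B$ alone. By Definition \ref{def:parameterseq} (together with Lemma \ref{lem:parameterseq}), the parameter sequence of the LR pair $A,B$ is the unique sequence $\{\vphi_i\}_{i=1}^d$ such that $\vphi_i$ is the eigenvalue of $BA$ acting on $V_i$, which is determined entirely by $A,B$. Consequently both $\vphi_i$ and $\tilde{\vphi}_i$ are equal to this common value for each $1 \leq i \leq d$.

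There is no real obstacle here; the content of the lemma is just to make the notational point that since the first coordinate of the parameter array is intrinsic to the pair $A,B$, it is unaffected by replacing $C$ with $\tilde{C}$. This short observation will presumably be used several times in the subsequent arguments (for example, together with Lemma \ref{lem:alpha1vphii} or Lemma \ref{lem:alpha2vphii}) to compare the Toeplitz data of $A,B,C$ and $A,B,\tilde{C}$.
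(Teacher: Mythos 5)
Your proof is correct and takes exactly the same route as the paper, which just observes in one line that each of $\{\vphi_i\}_{i=1}^d$ and $\{\tilde{\vphi}_i\}_{i=1}^d$ is by definition the parameter sequence of the LR pair $A,B$ and therefore they coincide. You spell out the underlying uniqueness facts (Lemma \ref{lem:ABdecomp} and Definition \ref{def:parameterseq}) a bit more explicitly, but the content is the same.
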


\begin{proof}
Each of $\{\vphi_i\}_{i=1}^d$ and $\{\tilde{\vphi}_i\}_{i=1}^d$ is the
parameter sequence for the LR pair $A,B$.
\end{proof}

\begin{lemma}   \label{lem:alpha}    \samepage
\ifDRAFT {\rm lem:alpha}. \fi
With reference to Notation \ref{notation},
assume that the $(A,C)$-decomposition is equal
to the $(A, \tilde{C})$-decomposition.
Then $\alpha'_ i = \tilde{\alpha}'_i$ for $0 \leq i \leq d$.
\end{lemma}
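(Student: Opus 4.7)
The plan is to reduce the equality $\alpha'_i = \tilde{\alpha}'_i$ to the equality of two transition matrices, and then observe that the two transition matrices are literally the same matrix once we make compatible basis choices.

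First, I would note that by Definition \ref{def:ABbasis}, the notion of an $(A,C)$-basis depends only on $A$ and on the $(A,C)$-decomposition; the map $C$ itself plays no role. Hence, under the hypothesis that the $(A,C)$-decomposition equals the $(A,\tilde{C})$-decomposition, every $(A,C)$-basis is automatically an $(A,\tilde{C})$-basis, and conversely.

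Next, I would fix an $(A,B)$-basis $\{w_i\}_{i=0}^d$ of $V$, and then choose the (unique) $(A,C)$-basis $\{v_i\}_{i=0}^d$ with $v_0=w_0$. By the previous observation this same $\{v_i\}_{i=0}^d$ is also an $(A,\tilde{C})$-basis. Since $v_0=w_0$, the basis $\{v_i\}_{i=0}^d$ is compatible with $\{w_i\}_{i=0}^d$ in the sense of Definition \ref{def:compatible} when viewed either as an $(A,C)$-basis or as an $(A,\tilde{C})$-basis.

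Finally, by Definition \ref{def:T}(ii), the matrix $T'$ attached to $A,B,C$ is the transition matrix from $\{v_i\}_{i=0}^d$ to $\{w_i\}_{i=0}^d$, and the matrix $\tilde{T}'$ attached to $A,B,\tilde{C}$ is also the transition matrix from $\{v_i\}_{i=0}^d$ to $\{w_i\}_{i=0}^d$. Thus $T'=\tilde{T}'$, and reading off the Toeplitz parameters via Definition \ref{def:Toeplitzdata} gives $\alpha'_i=\tilde{\alpha}'_i$ for $0\leq i\leq d$. There is no real obstacle here; the argument is essentially a consistency check on the definitions, and the only point requiring a brief justification is that an $(A,C)$-basis is intrinsic to $A$ and the $(A,C)$-decomposition.
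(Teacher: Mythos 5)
Your proof is correct and follows essentially the same route as the paper: fix an $(A,B)$-basis, take the compatible $(A,C)$-basis, observe via Definition \ref{def:ABbasis} that it is also an $(A,\tilde{C})$-basis, and conclude that $T'$ and $\tilde{T}'$ are the same transition matrix. The only difference is that you articulate more explicitly the observation that an $(A,C)$-basis depends only on $A$ and the $(A,C)$-decomposition, which the paper leaves implicit.
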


\begin{proof}
Let $T'$ (resp.\ $\tilde{T}'$) 
be the upper triangular Toeplitz matrix with parameters $\{\alpha'_i\}_{i=0}^d$
(resp. $\{ \tilde{\alpha}'_i\}_{i=0}^d$).
Fix an $(A,B)$-basis $\{v_i\}_{i=0}^d$ for $V$.
Let $\{v'_i\}_{i=0}^d$ be the $(A,C)$-basis for $V$ such that $v'_0 = v_0$.
By the assumption and Definition \ref{def:ABbasis},
 $\{v'_i\}_{i=0}^d$ is an $(A, \tilde{C})$-basis for $V$.
By this and Definition \ref{def:T},
each of $T'$, $\tilde{T}'$ is the transition matrix from $\{v'_i\}_{i=0}^d$ to
$\{v_i\}_{i=0}^d$. So $T' = \tilde{T}'$.
The result follows.
\end{proof}

\begin{lemma}    \label{lem:bip}    \samepage
\ifDRAFT {\rm lem:bip}. \fi
With reference to Notation \ref{notation},
assume that the $(A,C)$-decomposition is equal
to the $(A, \tilde{C})$-decomposition.
Then $A,B,C$ is bipartite if and only if $A,B,\tilde{C}$ is bipartite.
\end{lemma}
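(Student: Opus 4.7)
The plan is to reduce bipartiteness of an LR triple to a single numerical condition on the Toeplitz data, namely $\alpha'_1 = 0$, and then to exploit the fact that the hypothesis forces $\alpha'_1 = \tilde\alpha'_1$ via Lemma \ref{lem:alpha}. Once that reduction is in place, the equivalence is immediate. The only real conceptual step is recognizing that bipartiteness is detected by a single Toeplitz coefficient that is invariant under replacing $C$ by $\tilde{C}$ (given that the $(A,C)$-decomposition is preserved).

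First I would dispose of the trivial case $d=0$ by invoking Lemma \ref{lem:d0}: every LR triple with $d=0$ is bipartite, so both $A,B,C$ and $A,B,\tilde{C}$ are bipartite and there is nothing to check.

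Assume henceforth $d \geq 1$. I would establish the following characterization: an LR triple on $V$ is bipartite if and only if its Toeplitz coefficient $\alpha'_1$ equals zero. The forward direction is Lemma \ref{lem:bipartitealphai} applied to the odd index $i=1$. The backward direction is the contrapositive of Lemma \ref{lem:alpha1}, which asserts that a nonbipartite LR triple has $\alpha'_1 \neq 0$.

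With this characterization in hand, the argument closes itself. By the hypothesis $(A,C)$-decomposition $= (A,\tilde C)$-decomposition together with Lemma \ref{lem:alpha}, we obtain $\alpha'_1 = \tilde\alpha'_1$. Therefore $\alpha'_1 = 0$ if and only if $\tilde\alpha'_1 = 0$, and by the characterization $A,B,C$ is bipartite if and only if $A,B,\tilde C$ is bipartite. The main obstacle, such as it is, lies in identifying the right bipartite test; the bookkeeping afterward is trivial.
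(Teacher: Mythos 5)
Your proof is correct and follows essentially the same route as the paper: reduce to $d \geq 1$ via Lemma \ref{lem:d0}, characterize bipartiteness by $\alpha'_1 = 0$ via Lemmas \ref{lem:alpha1} and \ref{lem:bipartitealphai}, then invoke Lemma \ref{lem:alpha} to conclude. No discrepancies.
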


\begin{proof}
By Lemma \ref{lem:d0}, we may assume $d \geq 1$.
By Lemmas \ref{lem:alpha1} and \ref{lem:bipartitealphai},  $A,B,C$ is bipartite 
if and only if $\alpha'_1=0$.
Similarly, $A,B,\tilde{C}$ is bipartite if and only if $\tilde{\alpha}'_1=0$.
By Lemma \ref{lem:alpha}, $\alpha'_1=0$ if and only if $\tilde{\alpha}'_1=0$.
The result follows form these comments.
\end{proof}

\begin{proofof}{Theorem \ref{thm:nonbipartite}}
(i)$\Rightarrow$(ii):
We use Notation \ref{notation}.
By Lemma \ref{lem:bip} the LR triple $A,B,\tilde{C}$ is nonbipartite.
Note by Lemma \ref{lem:alpha1} 
that $d \geq 1$ and each of $\alpha_1$, $\alpha'_1$, $\tilde{\alpha}_1$, $\tilde{\alpha}'_1$
is nonzero.
Define
\begin{equation}        \label{eq:gamma}
   \gamma = \alpha_1 \tilde{\alpha}_1^{-1}.
\end{equation}
Note that $\gamma \neq 0$.
Define
\begin{equation*} 
  \bar{C} = \gamma C.
\end{equation*}
By Lemma \ref{lem:scalarmultiple},
$A,B, \bar{C}$ is a nonbipartite LR triple.
For this LR triple, we denote the parameter array and the
Toeplitz data by
\begin{align*}
 &   (\{ \bar{\vphi}_i \}_{i=1}^d, 
    \{ \bar{\vphi}'_i \}_{i=1}^d, 
    \{ \bar{\vphi}''_i \}_{i=1}^d),
&& 
    (\{ \bar{\alpha}_i \}_{i=0}^d, 
    \{ \bar{\alpha}'_i \}_{i=0}^d, 
    \{ \bar{\alpha}''_i \}_{i=9}^d),
\end{align*}
respectively.
By Lemma \ref{lem:scalarmultiple},
the $(A,C)$-decomposition is equal to
the $(A, \bar{C})$-decomposition, and
\begin{align}
  \bar{\vphi}_i &=  \vphi_i , &&   (1 \leq i \leq d),   \label{eq:vphibar}
\end{align}
\begin{align}
  \bar{\alpha}_i &= \gamma^{-i} \alpha_i,  &
  \bar{\alpha}'_i &= \alpha'_i      &&   (0 \leq i \leq d).          \label{eq:alphabar}
\end{align}
Applying Lemma \ref{lem:alpha1vphii} to the LR triples $A,B, \tilde{C}$ and
$A,B, \bar{C}$,
\begin{align}
    \frac{ \tilde{\alpha}_1 } { \tilde{\vphi}_i }
  = \frac{\tilde{\alpha}'_1 } { \tilde{\vphi}'_i }   &&   (1 \leq i \leq d),  \label{eq:aux1}
\\
    \frac{ \bar{\alpha}_1 } { \bar{\vphi}_i }
  = \frac{\bar{\alpha}'_1 } { \bar{\vphi}'_i }   &&   (1 \leq i \leq d).  \label{eq:aux2}
\end{align}
By \eqref{eq:aux1} and Lemmas \ref{lem:vphi}, \ref{lem:alpha},
\begin{align}
   \tilde{\vphi}'_i &= \frac{ \alpha'_1 } { \tilde{\alpha} _1} \vphi_i  
                              && (1 \leq i \leq d).                              \label{eq:tildevphi}
\end{align}
By \eqref{eq:vphibar}, \eqref{eq:alphabar}, \eqref{eq:aux2},
\begin{align}
  \bar{\vphi}'_i &= \frac{ \alpha'_1} { \gamma^{-1} \alpha_1 } \vphi_i
                              &&  (1 \leq i \leq d).                            \label{eq:barvphi}
\end{align}
Combining \eqref{eq:gamma}, \eqref{eq:tildevphi}, \eqref{eq:barvphi},
\begin{align}
  \tilde{\vphi}'_i &= \bar{\vphi}_i  &&   (1 \leq i \leq d).
\end{align}
Therefore the LR pairs $B, \tilde{C}$ and $B, \bar{C}$  have the same parameter sequence.
By the construction, the $(A, \tilde{C})$-decomposition is equal to the
$(A, \bar{C})$-decomposition.
By this and Lemma \ref{lem:ACBCdecomp} the $(B, \tilde{C})$-decomposition
is equal to the $(B,\bar{C})$-decomposition.
By these comments and Lemma \ref{lem:AB},  $\tilde{C} = \bar{C}$.
Thus $\tilde{C} = \gamma C$.

(ii)$\Rightarrow$(i):
By Lemma \ref{lem:scalarmultiple}(ii).
\end{proofof}

\begin{proofof}{Theorem \ref{thm:bipartite}}
(i)$\Rightarrow$(ii):
We use Notation \ref{notation}.
By Lemma \ref{lem:bipartitealphai}, $d$ is even.
By Lemma \ref{lem:d0}, we may assume that $d \geq 2$.
By Lemma \ref{lem:bip}, the LR triple $A,B,\tilde{C}$ is bipartite.
Note by Lemma \ref{lem:bipartitealphai} that each of
$\alpha_2$, $\alpha'_2$, $\tilde{\alpha}_2$, $\tilde{\alpha}'_2$ is nonzero.
Define scalars
\begin{align}    \label{eq:gammaout}
 \gamma_\text{out} &= \frac{\tilde{\vphi}'_1} {\vphi'_1},  &
 \gamma_\text{in} &=
         \frac{\alpha_2 \vphi'_1} { \tilde{\alpha}_2 \tilde{\vphi}'_1 }.
\end{align}
Note that each of $\gamma_\text{out}$, $\gamma_\text{in}$ is nonzero.
Define  
\begin{equation*}  
  \bar{C} = \gamma_\text{out} C_\text{out} + \gamma_\text{in} C_\text{in}.
\end{equation*}
By Lemma \ref{lem:bipartitescalar2}, $A,B, \bar{C}$ is a bipartite LR triple.
For this LR triple, we denote the parameter array and the
Toeplitz data by
\begin{align*}
 &   (\{ \bar{\vphi}_i \}_{i=1}^d, 
    \{ \bar{\vphi}'_i \}_{i=1}^d, 
    \{ \bar{\vphi}''_i \}_{i=1}^d),
&& 
    (\{ \bar{\alpha}_i \}_{i=0}^d, 
    \{ \bar{\alpha}'_i \}_{i=0}^d, 
    \{ \bar{\alpha}''_i \}_{i=9}^d),
\end{align*}
respectively.
By Lemma \ref{lem:bipartitescalar2},
the $(A,C)$-decomposition is equal to
the $(A, \bar{C})$-decomposition, and
\begin{align}
  \bar{\vphi}_i &=  \vphi_i , 
& \bar{\vphi}'_i &=
    \begin{cases}
      \vphi'_i \gamma_\text{in}  &  \text{ if $i$ is even},  \\
      \vphi'_i \gamma_\text{out} & \text{ if $i$ is odd}
    \end{cases}
&&   (1 \leq i \leq d),   \label{eq:vphibar2}
\end{align}
\begin{align}
  \bar{\alpha}_{2i} &= (\gamma_\text{out} \gamma_\text{in})^{-i} \alpha_{2i},  &
  \bar{\alpha}'_{2i} &= \alpha'_{2i}      &&   (0 \leq i \leq d/2).          \label{eq:alphabar2}
\end{align}
Applying Lemma \ref{lem:alpha2vphii} to the LR triples $A,B,\tilde{C}$ and
$A,B, \bar{C}$,
\begin{align}
  \frac{ \tilde{\alpha}_2 } { \tilde{\vphi}_{2i} \tilde{\vphi}_1 } 
&= \frac{ \tilde{\alpha}'_2 } { \tilde{\vphi}'_{2i} \tilde{\vphi}'_1 } 
   &&  (1 \leq i \leq d/2),                                        \label{eq:tilde1}
\\
  \frac{\tilde{\vphi}_{2i-1} } { \tilde{\vphi}_1 }
&=  \frac{\tilde{\vphi}'_{2i-1} } { \tilde{\vphi}'_1 }
   &&  (1 \leq i \leq d/2),                                        \label{eq:tilde2}
\\
  \frac{ \bar{\alpha}_2 } { \bar{\vphi}_{2i} \bar{\vphi}_1 } 
&= \frac{ \bar{\alpha}'_2 } { \bar{\vphi}'_{2i} \bar{\vphi}'_1 } 
   &&  (1 \leq i \leq d/2),                                        \label{eq:bar1}
\\
  \frac{\bar{\vphi}_{2i-1} } { \bar{\vphi}_1 }
&=  \frac{\bar{\vphi}'_{2i-1} } { \bar{\vphi}'_1 }
   &&  (1 \leq i \leq d/2).                                        \label{eq:bar2}
\end{align}
By Lemmas \ref{lem:vphi} and \ref{lem:alpha},
\begin{align}
\tilde{\vphi}_i &= \vphi_i \qquad (1 \leq i \leq d),  
&  \tilde{\alpha}'_2 &= \alpha'_2.                     \label{eq:tilde3}
\end{align}
By \eqref{eq:vphibar2} and \eqref{eq:alphabar2},
\begin{align}
\bar{\vphi}_i &= \vphi_i  \qquad (1 \leq i \leq d),
 & \bar{\vphi}'_1 &= \vphi'_1 \gamma_\text{out}, 
 &  \bar{\alpha}_2 &= (\gamma_\text{out} \gamma_\text{in})^{-1} \alpha_2,
 & \bar{\alpha}'_2 &= \alpha'_2.
                                                                             \label{eq:bar3}
\end{align}
Simplify \eqref{eq:tilde1}--\eqref{eq:bar2} using \eqref{eq:tilde3}, \eqref{eq:bar3} to find
\begin{align}
 \frac{ \tilde{\alpha}_2 }{ \vphi_{2i} \vphi_1 }
  &= \frac{ \alpha'_2 } { \tilde{\vphi}'_{2i} \tilde{\vphi}'_1 }  
                     &&  (1 \leq i \leq d/2),                           \label{eq:tilde1b}
\\
 \frac{\vphi_{2i-1} }{ \vphi_1 } 
  &= \frac{\tilde{\vphi}'_{2i-1} }{\tilde{\vphi}'_1 }
                     &&   (1 \leq i \leq d/2),                          \label{eq:tilde2b}
\\
 \frac{(\gamma_\text{out} \gamma_\text{in})^{-1} \alpha_2 } {\vphi_{2i} \vphi_1 }
 &= \frac{ \alpha'_2 } { \bar{\vphi}'_{2i} \vphi'_1 \gamma_\text{out}} 
                      &&  (1 \leq i \leq d/2),                          \label{eq:bar1b}
\\
 \frac{ \vphi_{2i-1} }{ \vphi_1 }
  &= \frac{ \bar{\vphi}'_{2i-1} } { \vphi'_1 \gamma_\text{out} }
                      &&  (1 \leq i \leq d/2).                          \label{eq:bar2b}
\end{align}
By \eqref{eq:tilde1b} and \eqref{eq:bar1b},
\begin{align*}
  \frac{ \bar{\vphi}'_{2i} } { \tilde{\vphi}'_{2i} }
  &= \frac{ \tilde{\alpha}_2 \tilde{\vphi}'_1 } { \alpha_2 \vphi'_1 } \gamma_\text{in}
              &&    (1 \leq i \leq d/2).
\end{align*}
By this and \eqref{eq:gammaout},
\begin{align}
  \tilde{\vphi}'_{2i} &= \bar{\vphi}'_{2i}   &&    (1 \leq i \leq d/2).    \label{eq:even}
\end{align}
By \eqref{eq:tilde2b} and \eqref{eq:bar2b},
\begin{align*}
 \frac{ \bar{\vphi}'_{2i-1} }{ \tilde{\vphi}'_{2i-1} }
  &= \frac{\vphi'_1} { \tilde{\vphi}'_1 } \gamma_\text{out}
    &&  (1 \leq i \leq d/2).
\end{align*}
By this and \eqref{eq:gammaout},
\begin{align}
  \tilde{\vphi}'_{2i-1} &= \bar{\vphi}'_{2i-1}  &&  (1 \leq i \leq d/2).   \label{eq:odd}
\end{align}
By \eqref{eq:even} and \eqref{eq:odd},
$\tilde{\vphi}'_i = \bar{\vphi}'_i$ for $1 \leq i \leq d$.
Thus the LR pairs $B, \tilde{C}$ and $B, \bar{C}$ have the same parameter sequence.
By the construction, the $(A, \tilde{C})$-decomposition is equal to the
$(A, \bar{C})$-decomposition.
By this and Lemma \ref{lem:ACBCdecomp} the $(B, \tilde{C})$-decomposition
is equal to the $(B,\bar{C})$-decomposition.
By these comments and Lemma \ref{lem:AB}, $\tilde{C} = \bar{C}$.
Therefore
$\tilde{C} = \alpha_\text{out} C_\text{out} + \alpha_\text{in} C_\text{in}$.

(ii)$\Rightarrow$(i):
By Lemma \ref{lem:bipartitescalar2}(ii).
\end{proofof}

\begin{proofof}{Theorem \ref{thm:main2}}
(i):
First assume that there exists a nonzero scalar $\gamma$ in $\F$ such that
$\tilde{C} = \gamma C$.
By Lemma \ref{lem:scalarmultiple} $A,B, \gamma C$ is an LR triple, 
and the idempotent data for this LR triple is equal to the idempotent data for $A,B,C$.
So the $(A, \gamma C)$-decomposition is equal to the $(A,C)$-decomposition,
and the  $(B, \gamma C)$-decomposition is equal to the $(B,C)$-decomposition.
Thus  ($\tilde{\text{\rm a}}$)--($\tilde{\text{\rm c}}$) hold.
Next assume that $\tilde{C}$ satisfies ($\tilde{\text{\rm a}}$)--($\tilde{\text{\rm c}}$).
By (b) and ($\tilde{\text{\rm b}}$) the $(A, \tilde{C})$-decomposition is equal to the
$(A,C)$-decomposition.
Now by Theorem \ref{thm:nonbipartite} there exists a nonzero scalar $\gamma$ in $\F$
such that $\tilde{C} = \gamma C$.

(ii):
First assume  that there exist nonzero scalars $\gamma_\text{out}$, $\gamma_\text{in}$
in $\F$ such that
$\tilde{C} = \gamma_\text{out} C_\text{out} + \gamma_\text{in} C_\text{in}$.
By Lemma \ref{lem:bipartitescalar2} $A,B,\tilde{C}$ is an LR triple, 
and the idempotent data for this LR triple is equal to the idempotent data for $A,B,C$.
So the $(A,C)$-decomposition is equal to the $(A,\tilde{C})$-decomposition,
and  $(B, \gamma C)$-decomposition is equal to the $(B,C)$-decomposition.
Thus  ($\tilde{\text{\rm a}}$)--($\tilde{\text{\rm c}}$) hold.
Next assume that  $\tilde{C}$ satisfies ($\tilde{\text{\rm a}}$)--($\tilde{\text{\rm c}}$).
By (b) and ($\tilde{\text{\rm b}}$) the $(A, \tilde{C})$-decomposition is equal to the
$(A,C)$-decomposition.
Now by Theorem \ref{thm:bipartite} there exist nonzero scalar 
$\gamma_\text{out}$, $\gamma_\text{in}$ in $\F$ such that
$\tilde{C} = \gamma_\text{out} C_\text{out} + \gamma_\text{in} C_\text{in}$.
\end{proofof}

\section{Acknowledgement}

The author thanks Paul Terwilliger for many insightful comments that lead to
great improvements in the paper.

{
\small

}

\bigskip\bigskip\noindent
Kazumasa Nomura\\
Tokyo Medical and Dental University\\
Kohnodai, Ichikawa, 272-0827 Japan\\
email: knomura@pop11.odn.ne.jp

\medskip\noindent
{\small
{\bf Keywords.} Lowering map, raising map, LR triple, quantum algebra, Lie algebra
\\
\noindent
{\bf 2010 Mathematics Subject Classification.} 17B37, 15A21
}

\end{document}